\numberwithin{equation}{section}
\crefname{hypothesis}{Hypothesis}{Hypotheses}
\title{GP-HMAT: Scalable, $\mathcal{O}(\lowercase{n} \log(\lowercase{n}))$ Gaussian Process Regression with Hierarchical Low-Rank Matrices\thanks{Submitted to the editors 2021.
\funding{This research was sponsored by ARL under Cooperative Agreement Number W911NF-12-2-0023. The views and conclusions contained in this document are those of the authors and should not be interpreted as representing the official policies, either expressed or implied, of ARL or the U.S. government. The U.S. government is authorized to reproduce and distribute reprints for government purposes notwithstanding any copyright notation herein. The first and fourth authors are partially supported by AFOSR FA9550-20-1-0338. The fourth author is partially supported by DARPA EQUiPS N660011524053.}}}
\author{
  Vahid Keshavarzzadeh\thanks{Scientific Computing and Imaging Institute, University of Utah, Salt Lake City, UT (\email{vkeshava@sci.utah.edu}, \email{kirby@sci.utah.edu}, \email{akil@sci.utah.edu}).}
  \and
  Shandian Zhe\thanks{School of Computing, University of Utah, Salt Lake City, UT (\email{zhe@cs.utah.edu})}
  \and
  Robert M. Kirby\footnotemark[2]\hskip 3pt$^,$\footnotemark[3]
    \and
  Akil Narayan\footnotemark[2]\hskip 3pt$^,$\thanks{Department of Mathematics, University of Utah, Salt Lake City, UT}
}
\begin{document}

\maketitle

\begin{abstract}
A Gaussian process (GP) is a powerful and widely used regression technique.
The main building block of a GP regression is the covariance
 kernel, which characterizes the relationship between pairs in the random field. The optimization to
 find the optimal kernel, however, requires several large-scale and often unstructured matrix inversions.
 We tackle this challenge by introducing a hierarchical matrix approach, named HMAT, which effectively decomposes the matrix structure, in a recursive manner, into significantly smaller matrices
 where a direct approach could be used for inversion. Our matrix partitioning uses a particular aggregation strategy for data points, which promotes the low-rank structure of off-diagonal blocks in the hierarchical kernel matrix. We employ a randomized linear algebra method for matrix reduction on the
 low-rank off-diagonal blocks without factorizing a large matrix. We provide analytical error
 and cost estimates for the inversion of the matrix, investigate them empirically with numerical computations, and demonstrate the application of our approach on three numerical examples involving
 GP regression for engineering problems and a large-scale real dataset. We provide the
 computer implementation of GP-HMAT, HMAT adapted for GP likelihood and derivative computations, and the implementation of the last numerical example on a real dataset. We demonstrate superior scalability of the HMAT approach compared
 to built-in $\backslash$ operator in MATLAB for large-scale linear solves $\bm A \bm x = \bm y$ via a repeatable and verifiable empirical study. An extension to hierarchical semiseparable (HSS) matrices is discussed as future research.
\end{abstract}

\begin{keywords}
Scalable Gaussian Process, Hierarchical Matrices, Matrix Partitioning, Randomized Linear Algebra, Marginal Likelihood Optimization 
\end{keywords}

\begin{AMS}
15A23, 65F05, 65F20, 65F55 , 65N55 	
\end{AMS}

\section{Introduction}\label{Sec1}

\subsection{Motivation and background}\label{Sec1_1}
A Gaussian process (GP) is a random field or a stochastic process defined by a collection of random variables that are typically associated with spatial points or time instances. Every finite collection of such random variables has a multivariate normal distribution in the Gaussian process. This Gaussian characteristic makes GP a versatile and amenable tool for numerical computations in science and engineering~\cite{Rasmussen06}. An indispensable part of any GP regression is the task of matrix inversion and determinant computation, which are major numerical linear algebra tasks. To be precise, let $\mathcal{D} = \{(\bm x_i, \bm y_i)\}_{i=1}^n$ be the training samples where $\bm x_i \in \mathbb{R}^d$ and $\bm y_i \in \mathbb{R}$ are the sampling sites (nodes)  and the observational data, and $\bm A$ a kernel matrix with entries $[\bm A]_{ij} = k(\bm x_i, \bm x_j, \bm \ell)$ where $k(\bm x_i, \bm x_j): \mathbb{R}^d \times \mathbb{R}^d \rightarrow \mathbb{R}$ is the kernel function and $\bm \ell$ is the vector of hyperparameters. Then, a typical objective function in GP is the log likelihood:
\begin{equation}\label{first_eq}
\begin{array}{l l l}
    \mathcal{L}  = \displaystyle -\frac{1}{2} \bm y^{T} (\bm A + \sigma_n^2\bm I)^{-1} \bm y   -\frac{1}{2} \log| \bm A + \sigma_n^2\bm I | - \frac{n}{2} \log(2\pi)
        \end{array}
\end{equation}
where $\bm I$ is the identity matrix, the fixed constant $\sigma_n$ represents the effect of noise in data. The goal in GP training is to maximize the objective function, i.e. log likelihood $\mathcal{L}$ to find the optimal hyperparameters $\bm \ell$. In addition to this well-known objective, the evidence lower bound denoted by $\mathcal{L}_{lower}$ has also been universally considered in the context of variational Bayesian inference and more specifically in variational inference approaches for GP, e.g. sparse variational GP~\cite{Burt19}. In sparse approximations, typically a Nystr\"{o}m approximation of the kernel matrix with $n$ data points $\bm A_{nn}$ in the form of $\hat{\bm A}_{nn} = \bm A^T_{mn} \bm A^{-1}_{mm} \bm A_{nm}$  where $m \ll n$ (hence sparse approximation) is considered~\cite{Titsias09_main}. 


Both objective functions $\mathcal{L}$ and $\mathcal{L}_{lower}$ involve linear system solves for computing the energy term i.e. $\bm y^T (\bm A + \sigma_n^2 \bm I)^{-1} \bm y$ and log determinant computation. In particular, the challenge of GP regression is the $\mathcal{O}(n^2)$ storage and $\mathcal{O}(n^3)$ arithmetic operation complexity for inversion which can hamper its application for large $n$. This issue is addressed to an extent in the sparse GP approaches~\cite{Burt19}, which show $\mathcal{O}(nm)$ storage and $\mathcal{O}(nm^2)$ time complexity. However, the determination of $m \ll n$ data points, referred to as induced points or landmark points in the GP literature, is not a trivial task and, itself, can be cast as a continuous optimization problem~\cite{Snelson6} or a discrete sampling problem (as a subset of original $n$ points)~\cite{Smola01,Keerthi6} in conjunction with the optimization of hyperparameters. 

In this paper, we develop a scalable GP regression approach by leveraging powerful tools/ideas in the rich field of numerical linear algebra~\cite{Golub_96,Anderson99}. The significant potential of using principled linear algebra approaches within the context of GP regression has also been the subject of much active research~\cite{JieChen21,Schafer20_main}. Among many approaches we specifically adopt the hierarchical low-rank matrices~\cite{Hackbush99,gra03} viewpoint that has been proven to be highly effective in reducing the computational complexity of large-scale problems. We pragmatically show that 1) the hierarchical decomposition of the matrix, 2) low-rank factorization of off-diagonal blocks via a randomized algorithm, and  3) performing low-rank updates via Sherman-Morrison-Woodburry (SMW) formula (similar to the Schur-complement formulation in the original $\mathcal{H}$-matrix literature) collectively render a concrete pipeline for inversion of large-scale matrices which we further adapt for the task of GP training and regression.

In what follows, we highlight notable aspects of our computational framework in comparison with a number of existing scalable GP approaches (typically involving Nystr\"{o}m low-rank approximation) and hierarchical matrix approaches (typically involving solution to finite element or finite difference discretization of elliptic partial differential equations). We also briefly discuss limitations associated with the current framework.

\textbf{Hierarchical decomposition}: A limited number of research works have addressed scalable GP with hierarchical matrix decomposition~\cite{JieChen21,Geoga20}. In~\cite{JieChen21}, the authors consider a similar problem but it appears that their hierarchical inversion procedure requires large matrix-matrix multiplications similar to original $\mathcal{H}$-matrix works which results in $\mathcal{O}(n\log(n)^2k^2)$ scalability. A GP regression approach for large data sets is presented in~\cite{Sudipto08} where the hierarchy is assumed within a Bayesian inference context. Similar SMW computations are utilized for likelihood evaluation in their MCMC sampling. In contrast to the GP literature, the literature on hierarchical matrix approaches for solving sparse linear systems $\bm A \bm x= \bm y$ associated with the discretization of elliptic PDEs such as Laplace’s equation is vast~\cite{Kenneth16,LIN20114071,Martinsson2009,Xia10multi}. The majority of research works in the context of sparse solvers incorporate geometric information. Geometric multigrid~\cite{Brandt77,Briggs2000} and graph partitioning~\cite{karyp95,karyp98} approaches such as nested dissection~\cite{George73} are among the well-established methods used within sparse solvers. Contrarily, the GP kernel matrices are often dense and constructed on an unstructured set of nodes. We are mainly interested in forming low-rank off-diagonal blocks for which we devise a new and simple data aggregation strategy inspired by (geometry-independent) algebraic multigrid~\cite{AMG85,Vanek96}. 

\textbf{Low-rank approximation}: Many scalable GP approaches leverage efficient Nystr\"{o}m approximation~\cite{Drineas05_main} or ensemble Nystr\"{o}m method~\cite{Kumar09_main} for low-rank factorization. These approaches are less systematic and potentially less accurate compared to the (randomized) linear algebra procedures that we consider in this paper (See Section~\ref{S5_1}). An example of incorporating randomized SVD into GP computation (however in a different setting than matrix inversion) is in~\cite{Boulle21}. Another conceptually hierarchical approach for GP is considered in~\cite{Schafer20_main} where the authors perform incomplete Cholesky factorization using maximin elimination ordering for invoking a sparsity pattern.  We emphasize that the application of randomized algorithms for low-rank approximation in hierarchical matrices is not new~\cite{Martinsson2011}; however, the successful application and error estimation of such approaches in the matrix-free hierarchical inversion of dense matrices is less common. For example, the authors in~\cite{LIN20114071} consider forward computation for hierarchical matrices with a randomized algorithm. The construction is based on the simplifying assumption that the matrix-vector multiplication in their randomized SVD approach is fast. We eliminate this assumption and demonstrate a successful subsampling approach for fast computation of the randomized range finder that yields the overall desirable scalability $\mathcal{O}(n\log(n)k)$.


\textbf{Optimization of parameterized hierarchical matrices}:
The gradient-based optimization with parameterized hierarchical matrices has not been the central focus of the linear algebra community. However, gradient computation is a vital part of many machine learning procedures, including GP. The authors in~\cite{Geoga20} consider a similar GP problem and discuss an approach for HODLR matrices~\cite{Ambikasaran13} (HODLR: hierarchical off-diagonal low-rank, similar to our hierarchical construction); however, the low-rank factorization is based on Nystr\"{o}m approximation and the hierarchical gradient computation is not apparent from the formulation.  In this paper, we provide concrete algorithmic steps for computing the log likelihood gradient (with respect to hyperparameters $\bm \ell$) in a generic hierarchical setting. 

 \subsection{Contributions of this paper}
Our contribution in this paper was initially motivated by recent similar works~\cite{JieChen21,Geoga20}. We develop a scalable hierarchical inversion algorithm using a principled randomized linear algebra method. We provide analytical error and cost estimates for the matrix inversion in addition to their empirical investigations. The practical aspects of our contribution are as follows: 1) We provide a numerical scheme, HMAT, that requires $\mathcal{O}(n \log(n)k)$ arithmetic operations for the linear solve $\bm A \bm x = \bm y$~\footnote{The empirical results in Section~\ref{Sec5} exhibit even superior scalability in the last term, $k$. In particular, we present results with $\mathcal{O}(n \log(n)k^{\alpha})$ scalability where $\alpha \leq 1/2$.}. Linear solves with HMAT on $n=10^6$ nodes take slightly more than a minute on a single CPU on a relatively modern computer. 2) Our hierarchical matrix approach is adapted to likelihood $\mathcal{L}$ cf. Equation~\eqref{first_eq} and its derivative $\partial \mathcal{L}/\partial  \bm \ell$ computation  for the task of GP training. We provide another scheme, GP-HMAT for the GP training/regression that includes the likelihood gradient computations. The code infrastructure for these schemes is implemented in a modularized way. As a result, several forms of kernels (or hyperparameters) can be added to the main solver with minimal effort. The HMAT and GP-HMAT solvers are available in~\cite{Keshavarzzadeh_MATLAB_GPHMAT}. 

\textbf{Limitations of the approach}:
 A limitation of our approach is the sequential nature of computations. One major advancement is parallelizing the factorization of off-diagonal blocks. Recursive factorization of these blocks in a hierarchical semiseparable format is another advancement that we plan to pursue in our future work. Another limitation is the range of applicability with respect to various kernels. The rank structure of matrices (whether kernel or finite difference/element matrices) is not generally well understood. Our framework similarly to many others (e.g.~\cite{Schafer20_main}) is well suited to the low-rank kernel matrices (or smooth analytical kernels). We promote low-rankness (of off-diagonal blocks) via our data aggregation strategy, which to the best of our knowledge is done for the first time. We provide simple guidelines for practical application of our approach in Section~\ref{Sec5}.

The organization of the paper is described via the main algorithmic components of the computational framework:
\textbf{Matrix partitioning}: In Section~\ref{Sec2}, we describe a new and simple yet effective permutation strategy for aggregation of data points which results in lower rank off-diagonal blocks in the hierarchical matrix.   
\textbf{Low-rank approximation of off-diagonal blocks}: Section~\ref{Sec3} discusses our randomized approach for fast low-rank factorization of off-diagonal blocks. We report relevant error estimates that will be used for the hierarchical error estimation in the subsequent section.      
\textbf{Hierarchical matrix inversion via SMW formula}: In Section~\ref{Sec4}, we present the main algorithm for linear solve $\bm A \bm x = \bm y$ leveraging hierarchical low-rank SMW updates. This section also includes theoretical error and cost analyses in addition to computation of likelihood and its derivative for GP training. 

Section~\ref{Sec5} presents numerical experiments on three examples involving regression on an analytical problem for empirical studies on the code scalability and approximation errors, an engineering problem and a real dataset with a large size. Finally, Section~\ref{Sec6} discusses the final remarks and ideas for future research, including development of HSS matrices~\cite{Chandrasekaran05,Xia10_main,Xia10} in combination with high-performance computing for GP regression.

\section{Hierarchical decomposition}\label{Sec2}

\subsection{Notation and setup}\label{S2_1}
Throughout the paper, the following notation and setup are frequently used: We use bold characters to denote matrices and vectors. For example, $\bm x \in \R^d$ indicates a vector of variables in the domain of a multivariate function. We refer to computing the solution $\bm x = \bm A^{-1} \bm y$ as linear solve and to $\bm x$ as the solution to linear solve. The vector $\bm x \in \mathbb{R}^n$ is measured with its Euclidean norm $\| \bm x \|_2= (\sum_{i=1}^n x_i^2)^{1/2}$ and the matrices are mostly measured by their Frobenius norm denoted by $\| A \|_F = (\sum_{i,j} A_{i,j}^2)^{1/2} $.  In some cases we report results on $l_2$-norm, i.e. $\|\bm A \|_2 =\sup_{x \in \mathbb{R}^n, \bm x \neq \bm 0} \|\bm A \bm x\|_2/\|\bm x\|_2$. The notation \# denotes the cardinality of a set (or size of a set), e.g. given $\mathcal{I} =\{0,1,2,3\} \rightarrow \#\mathcal{I}=4$.

\subsection{Matrix partitioning}\label{S2_2}

Our main strategy to decompose large matrices into smaller ones involves a dyadic hierarchical decomposition. The process depends on two user-defined parameters: $\eta \in \N$ corresponding to the size of the largest square blocks that we can computationally afford to directly invert, and $\nu: \N \rightarrow \N$ corresponding to how finely each set of indices is dyadically partitioned. In this work, we use the following definition of $\nu$:
\begin{align}\label{sl_size}
  \nu(k) \coloneqq 10^{\left\lfloor \log_{10} (k - 0.5) \right\rfloor},
\end{align}
which essentially computes the largest power of 10 that is \textit{strictly} smaller than $k$. 
If we are given a set of indices of size $k$, then we split this set of indices into two subsets, one of size $\nu(k)$ and the other of size $k - \nu(k)$.

With $\bm A$ an $n \times n$ symmetric matrix, we create a tree, where each node corresponds to a subset of the indices $[n]$, and the branching factor/outdegree is equal to 2. The creation of the tree proceeds by starting at level 0, where a single parent (root) node associated with the full ordered index set $(1, \ldots, n)$ is declared. The construction of the tree is now recursively accomplished by looping over the level $l$: For each node at level $l$ associated with an ordered index set $\mathcal{I}$, the following operations are performed:
\begin{enumerate}[itemsep=1pt]
  \item If $\# \mathcal{I} \leq \eta$, then this node is marked as a leaf node, and no further operations are performed.
  \item If $\# \mathcal{I} > \eta$, then we subdivide this node into two child nodes that are inserted into level $l+1$.
    \begin{enumerate}[itemsep=1pt]
      \item Let $k = \# \mathcal{I}$. With $\mathcal{I} = (i_1, \ldots, i_{k})$, let $\bm a $ be a $1 \times k$ row vector corresponding to row $i_1$ and columns $\mathcal{I}$ of $\bm A$, i.e., $\bm a = \bm A(i_1, \mathcal{I})$. 
      \item We compute a permutation $(p_1, \ldots, p_{k})$ of $(1,\ldots, k)$, constructed by ordering elements of $\bm a$ in non-increasing magnitude, i.e., 
        \begin{align*}
          |a_{p_{q+1}}| &\leq |a_{p_q}|, & q &= 1, \ldots, k-1.
        \end{align*}
        In terms of work, this corresponds to generating/extracting a size-$k$ row from $\bm A$ and sorting in decreasing order.
      \item A child node of size $\nu(k)$ is created at level $l+1$ associated with the first $\nu(k)$ permuted indices of $\mathcal{I}$, i.e., $(j_{p_1}, \ldots, j_{p_{\nu(k)}})$.
      \item A second child node of size $k - \nu(k)$ is created at level $l+1$ associated with the remaining indices, i.e., $(j_{p_{\nu(k)+1}}, \ldots, j_{k})$.
    \end{enumerate}
\end{enumerate}
The procedure above is repeated for each node at level $l$. If, after this procedure, level $l+1$ is empty, then the decomposition process terminates. Otherwise, the above procedure is applied again with $l \gets l+1$. Figure \ref{Hmat_merged} illustrates the entire decomposition on a matrix of size $n = 5000$ and $\eta = 100$.

\begin{figure}[h!]
\centering
\includegraphics[width=3.9in]{./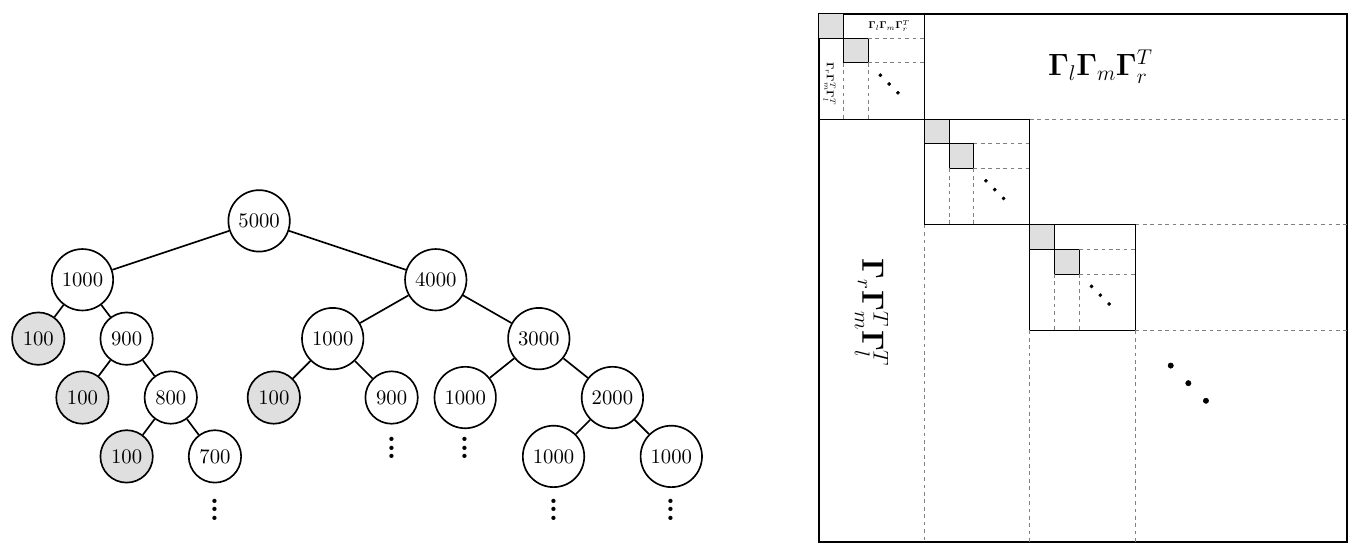}
\caption{\small{An example of the tree structure and its hierarchical matrix counterpart with size $n=5000$. For better visualization, the aspect ratio in the smallest blocks is not respected. The smallest blocks are meant to be $100 \times 100$ matrices within $1000 \times 1000$ blocks. Nodes associated with the smallest blocks are highlighted in gray color.}}\label{Hmat_merged}
\end{figure}

By virtue of this decomposition, each node (that is not the root) has precisely 1 sibling associated with the same parent node. If $\mathcal{I}$ and $\mathcal{J}$ are index sets associated with a node and its sibling, then the off-diagonal block $\bm A({\mathcal{I}, \mathcal{J}})$ (and also its symmetric counterpart) are approximated by an SVD $\bm \Gamma_l \bm \Gamma_m \bm \Gamma_r^T$ that we describe in the next sections. 

The above strategy for matrix partitioning is adopted from a more generic approach, \emph{maximal independent set} (MIS), that is often used in algebraic multigrid~\cite{Vanek96,Bell12,Treister15}. The main difference between our approach and MIS is the size-dependent parameter $\nu(k)$ for division of index sets instead of a parameter that is dependent on the magnitude of matrix entries (often denoted by $\theta$ in the MIS literature). Consideration of $\nu(k)$ (and consequently $k-\nu(k)$) directly results in the desirable number of aggregates, which is 2 in our construction. As mentioned in Section~\ref{Sec1}, our approach is also inherently different from graph partitioning approaches (used in sparse solvers), which are based on min-cut algorithms~\cite{stoerwagner} or nested dissection~\cite{George73}. In such sparse solvers, the goal is to minimize the communication between individual processors or reduce the number of fill-ins in Cholesky factorization. Our goal is to form off-diagonal blocks that are of low rank. The favorable effect of our aggregation strategy on the rank of the off-diagonal blocks is empirically demonstrated in Section~\ref{Sec5}. 

\section{Low-rank matrix approximation}\label{Sec3}
This part of our framework mainly adopts the procedures presented in~\cite{nhalko11}. In particular, we employ the randomized SVD approach with interpolative decomposition (ID) with some minor modifications that we briefly explain in this section. 

\subsection{Computing the range approximator Q}\label{Sec3_1}
The most significant building block of a randomized SVD factorization is the range approximator matrix $\bm Q$. Given a matrix $\bm A \in \mathbb{R}^{m \times n}$, the range approximator matrix $\bm Q \in \mathbb{R}^{m \times k}$ with $k$ orthonormal columns, is sought that admits 
\begin{equation}\label{randomized_app}
\|\bm A -\bm Q \bm Q^T \bm A\|_F \leq \epsilon
\end{equation}

where $\epsilon$ is a positive error tolerance (and is a crucial part of our error analysis). The high-level idea is then to approximate SVD of $\bm A$ from the SVD of a smaller matrix $\bm B = \bm Q^T \bm A$. A key consideration is efficient computation of $\bm Q$, and that is indeed achieved via a randomized approach. In particular, the range approximator $\bm Q \in \mathbb{R}^{m \times k}$ is found from a QR factorization on another matrix $\bm Y = \bm A \bm \Omega$ where $\bm \Omega \in \mathbb{R}^{n \times (k+p)}$ is a random matrix drawn from standard normal distribution with $k$ being a target rank and $p$ an oversampling parameter.

One significant bottleneck in performing this randomized approach is the computation of $\bm Y = \bm A \bm \Omega$. The complexity of computing $\bm Y$ is $\mathcal{O}(mnk)$ which for large $m$ and $n$ (and assuming $m \approx n$) yield quadratic scalability i.e. $\mathcal{O}(n^2k)$ which will deteriorate the scalability for the main inversion algorithm. We ideally aim to keep the complexity of individual steps within the main algorithm to $\mathcal{O}(n)$ such that the overall algorithm exhibits the desired scalability. In our case, the main algorithm achieves $\mathcal{O}(n \log(n))$ scalability where the $\log(n)$ term appears due to the depth of the hierarchical tree structure cf. Lemma~\ref{lemma:storage_complexity}.

The complexity issue of $\bm Y$ and the ways to address it have been discussed in~\cite{nhalko11, Rokhlin13212, WOOLFE08}; however, the actual implementation of the approaches therein decreases the complexity of $\mathcal{O}(mnk)$ to only $\mathcal{O}(mn\log(k))$. To achieve the desired scalability for the inversion algorithm, we subsample quite significantly in the second dimension of $\bm A \in \mathbb{R}^{m \times n}$, so that the inner product of each subsampled row of $\bm A$ and respective columns of smaller $\bm \Omega$ is computed fast.  

We set a maximum number of entries that we afford to evaluate from the kernel denoted by $n_{max}$ and find the number of subsampled columns as $n_{innprod}=\lfloor n_{max}/m \rfloor$. We also ensure that the number of subsampled columns is within the range $[2k,~10k]$ via
\begin{equation}\label{n_inn}
n_{innprod} \gets \min(\max(2k,n_{innprod}),10k).
\end{equation}
The above relation and the choice of $n_{max}$ are user-defined. Once this number is found, then the samples are uniformly drawn from integer numbers $\{1,\ldots,n\}$. With the above consideration, the number of subsampled columns will be at most, $10k$ which yields the $\mathcal{O}(nk^2)$ scalability (where again we assumed $m \approx n$ just to report the scalability) for computation of $\bm Y = \bm A \bm \Omega$. This is a desirable estimate as the cost is linearly proportional to the number of degrees of freedom. The minimum $2k$ samples also facilitates the justification for the subsampling, which we empirically investigate via a numerical example.

Note that evaluating $\tilde{\bm A} \in \mathbb{R}^{m \times n_{innprod}}$ is significantly cheaper than evaluating ${\bm A} \in \mathbb{R}^{m \times n}$ from the kernel function. However, the cost associated with this evaluation (i.e. calculation of $\exp(.)$) is not considered in our cost analysis, which is beyond the scope of this paper. We mainly consider basic matrix operations, e.g. addition and multiplication, in our cost analysis.
  
Once $\bm Q$ is computed, the computation of $\bm B = \bm Q^T \bm A$ can be followed; however, matrix $\bm A \in \mathbb{R}^{m \times n}$ is a full matrix that renders computation of $\bm B$ costly. Therefore, we consider a sketch of matrix $\bm A$ via interpolative decomposition (ID)~\cite{GuMing96,Cheng05} cf.~\ref{sec:ID}, a useful and well-established linear algebra procedure. 

\subsection{Randomized SVD with ID}~\label{sec:RSVD_ID} To compute the randomized SVD with ID, we specifically follow Algorithm 5.2 in~\cite{nhalko11}. The ID matrix, denoted by $\bm X$, is found from the interpolative decomposition procedure, applied to the matrix $\bm Q \in \mathbb{R}^{m \times k}$ cf. Equation~\eqref{randomized_app}.

 Algorithm~\ref{alg:rsvd_id} presents various steps in computing the SVD factorization, given GP nodes $\bm N \in \mathbb{R}^{d \times n}$ where $d$ is the dimension of data points and $n$ is the number of GP nodes,  and two index sets $\mathcal{I}_1,~\mathcal{I}_2$ where $\# \mathcal{I}_1=n_1,~\# \mathcal{I}_2=n_2$.  As an example for $n = 10^6$, using the relation~\eqref{sl_size} results in $n_1=10^5$ and $n_2=9\times 10^5$ for the largest (and first) off-diagonal block in the hierarchical decomposition. Also, considering $n_{max}=5\times10^6$ and $k=20$ yields $n_{innprod}=50$.

\begin{algorithm}
\caption{Randomized SVD with ID:~$[\bm \Gamma_l, \bm \Gamma_m, \bm \Gamma_r] \gets$ \texttt{rsvd\_id}($\bm N, [\mathcal{I}_1,\mathcal{I}_2],k$)}

\begin{algorithmic}[1]
\State Compute the matrix $\tilde{\bm A} \in \mathbb{R}^{n_1 \times n_{innprod}}$ from the kernel function using $n_{innprod}$ cf. Equation~\eqref{n_inn}
\State Compute $\bm Y = \tilde{\bm A} \bm \Omega$ where $\bm \Omega \in \mathbb{R}^{n_{innprod} \times k}$ is a standard normal random matrix 
\State Compute the range approximator matrix $\bm Q \in \mathbb{R}^{n_1 \times k}$ via $[\bm Q, \bm R_Y]  \gets qr(\bm Y)$
\State Compute the ID matrix $\bm X \in \mathbb{R}^{n_1 \times k}$ from $\bm Q$ and row indices $\mathcal{I}_{ID}$, i.e. $\bm Q \simeq \bm X \bm Q(\mathcal{I}_{ID},:)$ cf. Section~\ref{sec:ID} 
\State Evaluate the row skeleton matrix $\bm A(\mathcal{I}_{ID},:) \in \mathbb{R}^{k \times n_2}$ from the kernel function 
\State Compute the QR factorization of the row skeleton matrix, i.e.  $[\bm W, \bm R]  \gets qr(\bm A(\mathcal{I}_{ID},:)^T)$ where $\bm W \in \mathbb{R}^{n_2 \times k}$ is an orthonormal matrix and $\bm R \in \mathbb{R}^{k \times k}$
\State Compute $\bm Z = \bm X \bm R^T$ where $\bm Z \in \mathbb{R}^{n_1 \times k}$
\State Compute an SVD of $\bm Z = \bm \Gamma_l \bm \Gamma_m \tilde{\bm \Gamma}_r^T$ where $\bm \Gamma_l \in \mathbb{R}^{n_1 \times k}$,~$\bm \Gamma_m \in \mathbb{R}^{k \times k}$ and $\tilde{\bm \Gamma}_r \in \mathbb{R}^{k \times k}$
\State Form the orthonormal matrix $\bm \Gamma_r = \bm W \tilde{\bm \Gamma}_r$ where $\bm \Gamma_r \in \mathbb{R}^{n_2 \times k}$
\end{algorithmic}\label{alg:rsvd_id}
\end{algorithm}

The following two theorems provide bounds for the error tolerance $\epsilon$ cf. Equation~\eqref{randomized_app}. The first one concerns the mean of such error and the second one concerns the probability of failure. We use both estimates, i.e. mean and the threshold (i.e. the right-hand side of inequality~\eqref{err_pf}) to generate random samples of error in the empirical studies.

\begin{theorem}[Theorem 10.5 in~\cite{nhalko11}]\label{theorem1}
Suppose that $\bm A \in \mathbb{R}^{m \times n}$ is a real matrix with singular values $\sigma_1 \geq \sigma_2 \geq \ldots$. Choose  a target rank $k \geq 2$ and an oversampling parameter $p \geq 2$ where $k+p \leq \min\{m,n\}$. Draw an $n \times (k+p)$ standard Gaussian matrix $\bm \Omega$, and construct the sample matrix $\bm Y = \bm A \bm \Omega$. Then the expected approximation error (with respect to Frobenius norm) is
\begin{equation}\label{err_mean}
 \mathbb{E} \| (\bm {I} - \bm Q \bm Q^T) \bm A \|_F \leq \left(1+ \displaystyle \frac{k}{p-1}\right)^{1/2} \sqrt{\min\{m,n\}-k} ~\sigma_{k+1}.
 \end{equation}
\end{theorem}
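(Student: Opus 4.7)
The plan is to follow the standard proof architecture for randomized range-finder bounds: reduce to a deterministic inequality, then take expectations over the Gaussian test matrix. First I would introduce the (thin) SVD $\bm A = \bm U \bm \Sigma \bm V^T$ and split according to the target rank: let $\bm \Sigma_1$ be the diagonal block of top-$k$ singular values, $\bm \Sigma_2$ the remaining tail, and correspondingly partition $\bm V = [\bm V_1~\bm V_2]$. Define $\bm \Omega_1 = \bm V_1^T \bm \Omega \in \R^{k \times (k+p)}$ and $\bm \Omega_2 = \bm V_2^T \bm \Omega$. The crucial structural fact I would invoke is rotational invariance: since $\bm \Omega$ has i.i.d.\ standard Gaussian entries and $[\bm V_1~\bm V_2]$ is orthogonal, $\bm \Omega_1$ and $\bm \Omega_2$ are \emph{independent} Gaussian matrices with i.i.d.\ standard normal entries.

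Next I would establish the deterministic bound
\begin{equation*}
\|(\bm I - \bm Q \bm Q^T)\bm A\|_F^2 \;\leq\; \|\bm \Sigma_2\|_F^2 \;+\; \|\bm \Sigma_2 \bm \Omega_2 \bm \Omega_1^{\dagger}\|_F^2,
\end{equation*}
valid whenever $\bm \Omega_1$ has full row rank (which holds almost surely for Gaussians with $k+p \geq k$). This is the place where most of the actual linear-algebra work lives: the argument expresses the projector $\bm Q \bm Q^T$ in terms of the range of $\bm Y = \bm A \bm \Omega$, uses the fact that projecting onto a larger subspace can only decrease the residual, and then algebraically manipulates $\bm U \bm \Sigma [\bm \Omega_1; \bm \Omega_2]$ to isolate the tail contribution. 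I would appeal to the underlying Halko--Martinsson--Tropp deterministic lemma rather than re-derive it in full, but I would sketch the block factorization that makes $\bm \Omega_2 \bm \Omega_1^\dagger$ appear naturally.

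With the deterministic inequality in hand, I would take expectations and exploit independence by conditioning on $\bm \Omega_1$. The key identity I need is that for a fixed $\bm S$ of compatible size and a Gaussian matrix $\bm G$ independent of $\bm S$,
\begin{equation*}
\mathbb{E}\,\|\bm T \bm G \bm S\|_F^2 \;=\; \|\bm T\|_F^2 \, \|\bm S\|_F^2,
\end{equation*}
which applied with $\bm T = \bm \Sigma_2$, $\bm G = \bm \Omega_2$, $\bm S = \bm \Omega_1^\dagger$ gives $\mathbb{E}[\|\bm \Sigma_2 \bm \Omega_2 \bm \Omega_1^\dagger\|_F^2 \mid \bm \Omega_1] = \|\bm \Sigma_2\|_F^2 \, \|\bm \Omega_1^\dagger\|_F^2$. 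Then I would invoke the classical moment identity for a $k \times (k+p)$ Gaussian matrix, $\mathbb{E}\|\bm \Omega_1^\dagger\|_F^2 = k/(p-1)$, which comes from inverse-Wishart moment formulas and requires $p \geq 2$; this is exactly where the hypotheses $k,p \geq 2$ are used.

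Combining yields $\mathbb{E}\|(\bm I-\bm Q\bm Q^T)\bm A\|_F^2 \leq \|\bm \Sigma_2\|_F^2\bigl(1 + \tfrac{k}{p-1}\bigr)$. Finally I would pass from the second moment to the first via Jensen's inequality (concavity of $\sqrt{\cdot}$), and bound $\|\bm \Sigma_2\|_F^2 = \sum_{j>k} \sigma_j^2 \leq (\min\{m,n\}-k)\,\sigma_{k+1}^2$, producing the stated inequality. The main obstacle is the deterministic bound in step two, whose derivation is the technical heart of the argument; after that, the probabilistic half reduces to the two clean Gaussian identities above.
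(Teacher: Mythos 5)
Your proposal is correct and follows exactly the canonical argument for this result: the paper itself offers no proof, simply importing Theorem 10.5 from Halko--Martinsson--Tropp, and your sketch (rotational invariance giving independent Gaussian blocks $\bm \Omega_1, \bm \Omega_2$, the deterministic bound $\|(\bm I - \bm Q\bm Q^T)\bm A\|_F^2 \leq \|\bm \Sigma_2\|_F^2 + \|\bm \Sigma_2 \bm \Omega_2 \bm \Omega_1^\dagger\|_F^2$, the two Gaussian moment identities, Jensen, and the final bound $\sum_{j>k}\sigma_j^2 \leq (\min\{m,n\}-k)\sigma_{k+1}^2$) is precisely the proof in that reference. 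The only negligible imprecision is attributing the hypothesis $k \geq 2$ to the Frobenius-norm bound, whereas only $p \geq 2$ is actually needed there (the inverse-Wishart moment); $k \geq 2$ matters for the companion spectral-norm estimate.
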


\begin{theorem}[Theorem 10.7 in~\cite{nhalko11}]\label{theorem2}
Consider the hypotheses
of Theorem~\ref{theorem1}. Assume further that $p \geq 4$. For all $u,~t \geq 1$,
\begin{equation}\label{err_pf}
 \| (\bm {I} - \bm Q \bm Q^T) \bm A \|_F \leq \left [\left(1+ t\displaystyle \sqrt{\frac{3k}{p+1}}\right) \sqrt{\min\{m,n\}-k} + ut \frac{e \sqrt{k+p}}{p+1} \right ] ~\sigma_{k+1} 
 \end{equation}
with the failure probability at most $2t^{-p} + e^{-u^2/2}$.
\end{theorem}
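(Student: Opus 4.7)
The plan is to follow the standard randomized-SVD analysis used for this theorem in \cite{nhalko11}: first reduce the Frobenius-norm error to a functional of two independent Gaussian blocks via a deterministic structural inequality, and then combine three high-probability tail bounds through a union bound.

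I would begin by fixing an SVD $\bm A = \bm U \bm \Sigma \bm V^T$ and splitting $\bm \Sigma = \mathrm{diag}(\bm \Sigma_1,\bm \Sigma_2)$ with $\bm \Sigma_1$ holding the top $k$ singular values, together with the conformal splitting $\bm V = [\bm V_1 \mid \bm V_2]$. Setting $\bm \Omega_1 = \bm V_1^T\bm \Omega$ and $\bm \Omega_2 = \bm V_2^T\bm \Omega$, rotation invariance makes these independent standard Gaussian matrices of sizes $k \times (k+p)$ and $(n-k)\times(k+p)$. The deterministic bound underlying Theorem~9.1 of \cite{nhalko11} then yields
\[
\|(\bm I - \bm Q \bm Q^T)\bm A\|_F \;\leq\; \|\bm \Sigma_2\|_F + \|\bm \Sigma_2 \bm \Omega_2 \bm \Omega_1^{\dagger}\|_F,
\]
after the subadditive inequality $\sqrt{a^2+b^2}\le a+b$. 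Since $\|\bm \Sigma_2\|_F \le \sqrt{\min\{m,n\}-k}\,\sigma_{k+1}$, this isolates $\|\bm \Sigma_2 \bm \Omega_2 \bm \Omega_1^{\dagger}\|_F$ as the only random quantity that must be controlled.

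I would next condition on $\bm \Omega_1$ and view $F\colon \bm \Omega_2 \mapsto \|\bm \Sigma_2 \bm \Omega_2 \bm \Omega_1^{\dagger}\|_F$ as a Lipschitz function with constant at most $\|\bm \Sigma_2\|_2\|\bm \Omega_1^{\dagger}\|_2 \leq \sigma_{k+1}\|\bm \Omega_1^{\dagger}\|_2$. A direct second-moment calculation followed by Jensen's inequality gives $\mathbb{E}[F\mid \bm \Omega_1] \leq \|\bm \Sigma_2\|_F\|\bm \Omega_1^{\dagger}\|_F$, so Gaussian concentration for Lipschitz functions supplies, with probability at least $1-e^{-u^2/2}$,
\[
F(\bm \Omega_2) \;\leq\; \|\bm \Sigma_2\|_F\,\|\bm \Omega_1^{\dagger}\|_F + u\,\sigma_{k+1}\|\bm \Omega_1^{\dagger}\|_2.
\]
I would then invoke the two standard tail bounds on the pseudoinverse of a $k\times(k+p)$ standard Gaussian matrix, namely $\|\bm \Omega_1^{\dagger}\|_F \le t\sqrt{3k/(p+1)}$ and $\|\bm \Omega_1^{\dagger}\|_2 \le t\,e\sqrt{k+p}/(p+1)$, each holding with probability at least $1-t^{-p}$ for $p\ge 4$. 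A union bound over the three failure events (total probability at most $2t^{-p}+e^{-u^2/2}$) and back-substitution into the deterministic inequality delivers the announced bound exactly.

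The main obstacle is establishing the two sharp Gaussian-pseudoinverse tail estimates on $\|\bm \Omega_1^{\dagger}\|_F$ and $\|\bm \Omega_1^{\dagger}\|_2$. These rely on nontrivial moment computations for the inverse Wishart distribution together with a careful application of Markov's inequality, and constitute the real technical workhorse of the argument; in contrast, the deterministic reduction and the Gaussian-concentration step are relatively routine once the splitting $\bm \Omega = (\bm \Omega_1,\bm \Omega_2)$ has been identified. A minor secondary issue is verifying that the conditioning on $\bm \Omega_1$ is valid almost surely, which follows because $\bm \Omega_1$ has full row rank with probability one whenever $p \geq 0$.
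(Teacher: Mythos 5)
The paper does not prove this statement---it imports it verbatim as Theorem 10.7 of Halko--Martinsson--Tropp---and your outline is a faithful reconstruction of that reference's argument: the deterministic reduction via the split $\bm \Omega = (\bm \Omega_1, \bm \Omega_2)$ and Theorem 9.1 of that paper, conditional Gaussian concentration for the Lipschitz functional $\bm \Omega_2 \mapsto \|\bm \Sigma_2 \bm \Omega_2 \bm \Omega_1^{\dagger}\|_F$, the two pseudoinverse tail bounds of Proposition 10.4 there, and a union bound yielding the failure probability $2t^{-p} + e^{-u^2/2}$ (the spectral-norm tail actually fails with probability $t^{-(p+1)} \leq t^{-p}$, which you implicitly absorb correctly). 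No gaps; the only cosmetic difference is that the stated inequality already replaces $\|\bm \Sigma_2\|_F$ by its bound $\sqrt{\min\{m,n\}-k}\,\sigma_{k+1}$, which your reduction handles.
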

In our empirical studies, we use $t=e \approx 2.718$ and $u=\sqrt{2p}$. As a result, the failure probability simplifies to $3e^{-p}$. The following lemma provides a bound for the SVD factorization in Algorithm~\ref{alg:rsvd_id} using the error tolerance $\epsilon$ provided in the above theorems. 

\begin{lemma}[Lemma 5.1 in~\cite{nhalko11}]\label{lemma2}
Let $\bm A$ be an $m \times n$ matrix and $\bm Q$ an $m \times k$ matrix that satisfy~\eqref{randomized_app}, i.e. $\|\bm A - \bm Q \bm Q^T \bm A\| \leq \epsilon$. Suppose that $\bm U, \bm S$ and $\bm V$ are the matrices consructed by Algorithm~\ref{alg:rsvd_id}. Then,
\begin{equation}\label{epsvd}
 \| \bm A - \bm U \bm S \bm V^T \|_F \leq \left [ 1+ \sqrt{k + 4k(n-k)} \right ] \epsilon.
 \end{equation}
\end{lemma}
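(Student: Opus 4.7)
The plan is to observe that the factorization returned by Algorithm~\ref{alg:rsvd_id} is nothing more than the row-skeleton approximation $\bm X \bm A(\mathcal{I}_{ID},:)$ of $\bm A$, and then to bound how well this skeleton reproduces $\bm A$ by interposing the range approximator $\bm Q \bm Q^T \bm A$. Once that identity is in hand, the argument collapses to a single triangle inequality together with the exact ID relation $\bm Q = \bm X \bm Q(\mathcal{I}_{ID},:)$ and the standard norm bound on $\bm X$ coming from the interpolative decomposition.

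First I would unwind steps 6--9 of Algorithm~\ref{alg:rsvd_id}: step~9 gives $\bm \Gamma_r^T = \tilde{\bm \Gamma}_r^T \bm W^T$; step~8 gives $\bm \Gamma_l \bm \Gamma_m \tilde{\bm \Gamma}_r^T = \bm Z$; step~7 gives $\bm Z = \bm X \bm R^T$; and step~6 gives $\bm R^T \bm W^T = \bm A(\mathcal{I}_{ID},:)$. Composing, $\bm \Gamma_l \bm \Gamma_m \bm \Gamma_r^T = \bm X \bm A(\mathcal{I}_{ID},:)$, so the quantity to be bounded is the skeleton error $\| \bm A - \bm X \bm A(\mathcal{I}_{ID},:) \|_F$. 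I would then insert $\pm \bm Q \bm Q^T \bm A$ and apply the triangle inequality:
\begin{equation*}
\| \bm A - \bm X \bm A(\mathcal{I}_{ID},:) \|_F \leq \| \bm A - \bm Q \bm Q^T \bm A \|_F + \| \bm Q \bm Q^T \bm A - \bm X \bm A(\mathcal{I}_{ID},:) \|_F,
\end{equation*}
where the first summand is $\leq \epsilon$ by hypothesis \eqref{randomized_app}. For the second, I would use the exact ID identity produced in step~4 to write $\bm Q \bm Q^T \bm A = \bm X \bm Q(\mathcal{I}_{ID},:) \bm Q^T \bm A = \bm X \cdot (\bm Q \bm Q^T \bm A)(\mathcal{I}_{ID},:)$, which yields
\begin{equation*}
\bm Q \bm Q^T \bm A - \bm X \bm A(\mathcal{I}_{ID},:) = -\bm X \cdot \bigl[ (\bm I - \bm Q \bm Q^T) \bm A \bigr](\mathcal{I}_{ID},:).
\end{equation*}

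Finally I would invoke the submultiplicative inequality $\| \bm X \bm M \|_F \leq \| \bm X \|_F \| \bm M \|_2$ and observe that extracting a row-subset cannot increase spectral norm and that spectral norm is dominated by Frobenius norm, so the trailing factor is again at most $\epsilon$. The main obstacle, and the only analytically nontrivial ingredient, is the estimate on $\| \bm X \|_F$: appealing to the standard interpolative decomposition guarantees (cf.\ Section~\ref{sec:ID}), the $k$ skeleton rows of $\bm X$ form an identity block while every remaining entry is bounded by $2$ in magnitude, hence $\| \bm X \|_F^2 \leq k + 4k(n-k)$. Summing the two bounds then produces the claimed estimate $\| \bm A - \bm \Gamma_l \bm \Gamma_m \bm \Gamma_r^T \|_F \leq \bigl[ 1 + \sqrt{k + 4k(n-k)} \bigr] \epsilon$.
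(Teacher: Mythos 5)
Your argument is correct and is essentially the canonical proof of this result (the paper itself defers to Lemma~5.1 of the cited reference rather than proving it): the identity $\bm \Gamma_l \bm \Gamma_m \bm \Gamma_r^T = \bm X \bm A(\mathcal{I}_{ID},:)$, the reduction of the skeleton error to $\bm F - \bm X \bm F(\mathcal{I}_{ID},:)$ with $\bm F = (\bm I - \bm Q\bm Q^T)\bm A$ via $\bm Q = \bm X \bm Q(\mathcal{I}_{ID},:)$, and the Frobenius bound on $\bm X$ from the ID entry bound are exactly the intended ingredients. The only cosmetic wrinkle is inherited from the statement itself: since $\bm X$ has $m$ (not $n$) rows, the bound $\|\bm X\|_F^2 \leq k + 4k(\cdot - k)$ should strictly read with $m$ in place of $n$, but this bookkeeping issue lies in the lemma as quoted, not in your reasoning.
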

In the next section, we use the above bound, $\epsilon_{svd} = \left [ 1+ \sqrt{k + 4k(n-k)} \right ] \epsilon$ as the error in the off-diagonal factorization in our hierarchical construction.

\section{SMW computations and GP approximation}\label{Sec4}
In this section, we discuss the main inversion algorithm cf. Subsection~\ref{Sec4_1}, the analytical error and cost estimates cf. Subsections~\ref{sec:error} and~\ref{sec:cost}, the computation of likelihood and its gradient cf. Subsection~\ref{Sec4_4} and finally, the GP regression using the hierarchical matrix framework cf. Subsection~\ref{Sec4_5}.

\subsection{SMW computations for linear solve}\label{Sec4_1}

We have hitherto discussed matrix decomposition and our way of computing an SVD factorization for a rectangular matrix. In this subsection, we present our main tool for matrix inversion, i.e. Sherman-Morrison-Woodbury formula and discuss the associated algorithm for the hierarchical linear solve. 

The SMW formula for matrix inversion, with a similar formula for determinant computation is as follows:
 \begin{equation}\label{smw_formula}
 \begin{array}{l}
\bm A^{-1} = (\bm A_D+\bm U \bm C \bm V)^{-1} = \bm A_D^{-1} - \bm A_D^{-1} \bm U (\bm C^{-1}+\bm V \bm A_D^{-1} \bm U)^{-1} \bm V \bm A_D^{-1}\\
\\
 \text{det}(\bm A) = \text{det}(\bm A_D+\bm U \bm C \bm V) = \text{det}(\bm A_D) \text{det}(\bm C) \text{det}(\bm C^{-1} + \bm V \bm A^{-1}_D \bm U)
\end{array}
\end{equation}
where $\bm A$ is represented via the following dyadic decomposition to the diagonal blocks $\bm A_D$
and off-diagonal blocks $\bm A_{OD} = \bm U \bm C \bm V$:
{\fontsize{8.50}{8.50}\selectfont
\begin{equation}\label{smw_matrix}
\bm A=
  \left [ \begin{array}{l l}
 \bm A_{11} &  \bm \Gamma_l \bm \Gamma_m \bm \Gamma_r^T\\
 \bm \Gamma_r \bm \Gamma_m^T \bm \Gamma_l^T & \bm A_{22}
 \end{array} \right ]
  =  \underbrace{ \left [ \begin{array}{l l}
 \bm A_{11} & \bm 0\\
 \bm 0  & \bm A_{22}
 \end{array} \right ] }_{\bm A_D} +
 \underbrace{ \left [ \begin{array}{l l}
 \bm \Gamma_l & \bm 0\\
 \bm 0  & \bm \Gamma_r
 \end{array} \right ] }_{\bm U}
\underbrace{ \left [ \begin{array}{l l}
 \bm \Gamma_m  & \bm 0\\
 \bm 0 & \bm \Gamma_m^T
 \end{array} \right ] }_{\bm C}
 \underbrace{\left [ \begin{array}{l l}
 \bm 0 & \bm \Gamma_r^T\\
 \bm \Gamma_l^T  & \bm 0
 \end{array} \right ]}_{\bm V}. 
 \end{equation}}It is beneficial to revisit Figure~\ref{Hmat_merged} (lower right pane) to notice that every diagonal block of the original matrix is decomposed with the dyadic decomposition introduced above. The decomposition of diagonal blocks is continued until the block becomes smaller than or equal to $\eta$. 
 
Including the right-hand side $\bm y = [\bm y_1^T~ \bm y_2^T]^T$ in the first line of~\eqref{smw_formula} yields
\begin{equation}
\bm x = \bm A^{-1} \bm y = \bm A_D^{-1} \bm y- \bm A_D^{-1} \bm U (\bm C^{-1}+\bm V \bm A_D^{-1} \bm U)^{-1} \bm V \bm A_D^{-1} \bm y,
\end{equation}
which is the main equation in our hierarchical linear solve approach. To better understand the hierarchical linear solve algorithm, we transform the above equation to a decomposed matrix form
\begin{equation}~\label{smw_eq}
\begin{array}{l l}
\bm x = \bm A^{-1} \bm y & = 
  \left [ \begin{array}{l}
 \bm x_{D_1} \\
 \bm x_{D_2}  
 \end{array} \right ]  - \left [ \begin{array}{l l}
 \bm q_{l_1} & \bm 0\\
 \bm 0  & \bm q_{l_2}
 \end{array} \right ]  \left (\bm C^{-1}_{smw}  \left [ \begin{array}{l}
 \bm q_{ry_2} \\
 \bm q_{ry_1}  
 \end{array} \right ] \right ) \\
 \\
& = \left [ \begin{array}{l} 
 \bm x_{D_1} \\
 \bm x_{D_2}  
 \end{array} \right ]  - \left [ \begin{array}{ l}
 \bm q_{l_1}\bm s_{qry_1} \\
\bm q_{l_2}\bm s_{qry_2}
 \end{array} \right ]  
 \end{array}
\end{equation}
by introducing the following important variables: $\bm x_{D_1} \coloneqq \bm A^{-1}_{11} \bm y_1 \in \mathbb{R}^{n_1\times d_{y}},~\bm x_{D_2} \coloneqq \bm A^{-1}_{22} \bm y_2 \in   \mathbb{R}^{n_2\times d_{y}},~\bm q_{l_1}\coloneqq \bm A^{-1}_{11} \bm \Gamma_l \in   \mathbb{R}^{n_1\times k},~\bm q_{l_2}\coloneqq \bm A^{-1}_{22} \bm \Gamma^T_r  \in   \mathbb{R}^{n_2\times k},~\bm q_{lr_1}\coloneqq \bm \Gamma^T_l  \bm q_{l_1}  \in \mathbb{R}^{k\times k},~\bm q_{lr_2}\coloneqq \bm \Gamma_r \bm q_{l_2} \in \mathbb{R}^{k\times k},~\bm q_{ry_1}\coloneqq \bm \Gamma^T_l  \bm x_{D_1} \in \mathbb{R}^{k\times d_{y}},~ \bm q_{ry_2}\coloneqq \bm \Gamma_r \bm x_{D_2} \in \mathbb{R}^{k\times d_{y}}$
where 
\begin{equation}\label{Csmw}
\begin{array}{l l}
\bm C_{smw} \coloneqq \left [ \begin{array}{l l}
 \bm \Gamma^{-1}_m & \bm q_{lr2}\\
 \bm q_{lr1} & \bm \Gamma^{-T}_m 
 \end{array} \right ],  & 
 \left [ \begin{array}{l}
 \bm s_{qry_1} \\
 \bm s_{qry_2}  
 \end{array} \right ] \coloneqq \bm C^{-1}_{smw} \left [ \begin{array}{l}
 \bm q_{ry_2} \\
 \bm q_{ry_1}  
 \end{array} \right ]. 
 \end{array}
\end{equation}
In Equations~\eqref{smw_eq} and~\eqref{Csmw}, the matrix $\bm C_{smw}$ is the SMW correction matrix, i.e. $\bm C_{smw} = \bm C^{-1}+\bm V \bm A_D^{-1} \bm U$ and $\bm s_{qry_1} \in \mathbb{R}^{k \times d_y},~\bm s_{qry_2} \in \mathbb{R}^{k \times d_y}$ are two vectors (when $d_y=1$) or matrices (when $d_y>1$) obtained from a small size linear solve, i.e. $2k \times 2k$ with SMW correction matrix as the left-hand side. We also recall from the previous section that $\Gamma_l \in \mathbb{R}^{n_1 \times k}$,~$\Gamma_m \in \mathbb{R}^{k \times k}$ and $\Gamma_r \in \mathbb{R}^{n_2 \times k}$.

It is important to emphasize a crucial point in this hierarchical SMW computation. Finding $\bm x$, the solution to the linear solve problem, involves computing $\bm A_D^{-1} \bm U$. The solver hierarchically solves $\bm A^{-1} \bm y$ or ($\bm A_D^{-1} \bm y$). The solver is also recursively called to solve $\bm A_D^{-1} \bm U$. Only the right-hand side should be changed from $\bm y$ to $\bm U$. Indeed, the solver solves a portion of $\bm y$ and (approximate) right singular vector of the off-diagonal block at a particular level simultaneously. We achieve this by concatenating  $[\bm y_{1}~ \bm \Gamma_{l}]$ or $[\bm y_{2}~ \bm \Gamma_{r}]$. The main psuedocode for linear solve denoted by \texttt{back\_solve} and its  associated psuedocodes are presented in~\ref{App_algs}.

\subsection{Error estimate}\label{sec:error}
In this subsection, we develop an error estimate for the hierarchical computation of the solution $\bm x$. In both error and cost estimates we consider the particular dyadic decomposition shown in Figure~\ref{Hmat_merged} (right). We develop the total error for original (largest) matrix by assuming the smallest diagonal block (with size $n_{min}$) in the lower right corner of the matrix and enlarging the matrix by adding similar size diagonal blocks. Using the SMW formula, the foundation of our error estimation is built on the fact that the full matrix at level $i$ (i.e. combination of diagonal and off-diagonal blocks) forms the diagonal part of the matrix at level $i-1$, i.e. $\bm A_D^{(i-1)} = \bm A^{(i)}_D + \bm A^{(i)}_{OD}$. 

To simplify the presentation, we define two particular notations for computing the Frobenius norm of the matrix and the difference of matrices (i.e. the difference/error between the (full) matrix and its approximation): $f_{\alpha}(\bm A) = \| \bm A \|_F$ and $f_{\epsilon} (\bm A)=\|\bm A - \hat{\bm A} \|_F$.
Similarly, we define scalar variables $\alpha_i = f_{\alpha}(\bm A^{(i)^{-1}}_D),~\beta_i = f_{\alpha}(\bm U^{(i)}),~\epsilon_{D,i} = f_{\epsilon}(\bm A^{(i)^{-1}}_D)$ and $\epsilon_{OD,i} = f_{\epsilon}(\bm U^{(i)})$ to denote $f_{\alpha}$ and $f_{\epsilon}$ quantities at level $i$. 

The main result in Proposition~\ref{prob_frob_err} bounds the error of matrix inversion in a hierarchical matrix, that requires the estimate of error in each component of off-diagonal factorizations (computed individually at each level) cf. Lemma~\ref{lemma_UCV} and the Frobenius norm of the inverse of diagonal blocks  (computed hierarchically) cf. Lemma~\ref{lemma_frob_norm}. 

\begin{lemma}\label{lemma_UCV}
Given $\bm A_{OD} = \bm U \bm C \bm V$, $f_{\epsilon}(\bm A_{OD})=\| \bm A_{OD} - \hat{\bm A}_{OD} \|=\epsilon$ cf. Equation~\eqref{epsvd} and assuming $ \beta = \min (\| \bm U \|,\| \bm C \|,\| \bm V \|)$ then
 \begin{equation*}
 \begin{array}{l}
f_{\epsilon}(\bm U)  \leq \epsilon/\beta^2,~f_{\epsilon}(\bm C) \leq \epsilon/\beta^2,~ f_{\epsilon}(\bm V) \leq \epsilon/\beta^2.\\
\end{array}
\end{equation*}
\end{lemma}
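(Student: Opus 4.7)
The plan is to start from the standard first-order perturbation identity for a triple product, obtained by two add-and-subtract manipulations:
\[
\bm A_{OD}-\hat{\bm A}_{OD}=(\bm U-\hat{\bm U})\,\bm C\,\bm V+\hat{\bm U}\,(\bm C-\hat{\bm C})\,\bm V+\hat{\bm U}\,\hat{\bm C}\,(\bm V-\hat{\bm V}).
\]
Since the lemma asserts a separate bound for each factor, the key idea is to attribute the full discrepancy $\hat{\bm A}_{OD}-\bm A_{OD}$ (of Frobenius norm $\epsilon$) to a single factor at a time. Concretely, for the $\bm U$-bound I would take $\hat{\bm C}=\bm C$ and $\hat{\bm V}=\bm V$ so that the identity collapses to $(\hat{\bm U}-\bm U)\,\bm C\,\bm V=\hat{\bm A}_{OD}-\bm A_{OD}$; the analogous choices handle the $\bm C$ and $\bm V$ cases.

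Next I would isolate $\hat{\bm U}-\bm U$ by right-multiplication with the Moore--Penrose pseudoinverse $(\bm C\bm V)^{+}$. Sub-multiplicativity $\|XY\|_F\leq\|X\|_F\,\|Y\|_2$ then gives
\[
\|\hat{\bm U}-\bm U\|_F\;\leq\;\|\hat{\bm A}_{OD}-\bm A_{OD}\|_F\,\|(\bm C\bm V)^{+}\|_{2}\;=\;\frac{\epsilon}{\sigma_{\min}(\bm C\bm V)}.
\]
The final step is to replace $\sigma_{\min}(\bm C\bm V)$ by $\beta^{2}$. In the SVD-style factorization produced by Algorithm~\ref{alg:rsvd_id}, $\bm U=\bm\Gamma_l$ and $\bm V=\bm\Gamma_r^{T}$ have orthonormal columns and $\bm C=\bm\Gamma_m$ is diagonal in the rank-$k$ subspace, so the structural identities $\sigma_{\min}(\bm C\bm V)=\sigma_{\min}(\bm C)$ and $\sigma_{\min}(\bm C)\!\sim\!\|\bm C\|$ connect the singular-value denominator to the norms in the hypothesis. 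Combining this with $\beta\leq\min(\|\bm U\|,\|\bm C\|,\|\bm V\|)$ bounds $1/\sigma_{\min}(\bm C\bm V)$ by $1/\beta^{2}$, giving $f_{\epsilon}(\bm U)\leq\epsilon/\beta^{2}$. The bounds for $f_{\epsilon}(\bm C)$ and $f_{\epsilon}(\bm V)$ follow by the symmetric construction: freeze the other two factors and pseudoinvert on the appropriate side of the error identity.

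The principal obstacle I foresee is exactly the last step: the hypothesis is phrased in terms of norms (upper bounds on singular values), whereas the pseudoinverse estimate naturally produces minimum singular values in the denominator. Resolving this cleanly requires exploiting the specific SVD-type structure used in the algorithm (orthonormal $\bm U,\bm V$ and diagonal $\bm C$), where the largest and smallest singular values of the fixed factors can be directly linked to $\|\bm U\|,\|\bm C\|,\|\bm V\|$. I would therefore make this structural link explicit before collecting the three cases; if the desired link cannot be forced on general $\bm U,\bm C,\bm V$, the lemma should be read in the worst-case "error attribution" sense inherent to the triple-product expansion, and a coarser constant-free version of the inequality is what the argument actually delivers.
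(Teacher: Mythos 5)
Your proposal follows essentially the same route as the paper's proof: the add-and-subtract expansion of $\bm A_{OD}-\hat{\bm A}_{OD}$ across the three factors, followed by a worst-case attribution of the entire discrepancy to one factor at a time, and finally division by the norms of the two frozen factors. The paper's version is terser (it argues for a two-factor product $\bm A\bm B$ and then says "replace $\bm B$ with $\bm B\bm C$"), but the logic is identical.

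Where your write-up adds value is precisely the step you flag as the principal obstacle. The paper asserts that if $\|\bm A(\bm B-\hat{\bm B})\|\leq\epsilon$ then $\|\bm B-\hat{\bm B}\|\leq\epsilon/\beta$ with $\beta\leq\|\bm A\|$; but submultiplicativity runs the wrong way for this, and the honest denominator is $\sigma_{\min}(\bm A)$, exactly as your pseudoinverse argument produces. Replacing $\sigma_{\min}(\bm C\bm V)$ by $\beta^{2}=\min(\|\bm U\|,\|\bm C\|,\|\bm V\|)^{2}$ is not valid for general factors, since $\sigma_{\min}$ can be far below the norm; it only becomes defensible by invoking the structure of the factors produced by Algorithm~\ref{alg:rsvd_id} ($\bm \Gamma_l,\bm \Gamma_r$ with orthonormal columns, $\bm \Gamma_m$ diagonal), which is the structural link you propose to make explicit. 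So your proof is not a different argument, but a more careful execution of the same one, and it correctly isolates the step that the paper's own proof passes over without justification; the caveat in your final paragraph (that for general $\bm U,\bm C,\bm V$ only a $\sigma_{\min}$-denominated bound is actually delivered) is accurate and applies equally to the published proof.
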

\begin{proof}
See~\ref{proof_lemmaUCV}. 
\end{proof}

When we exercise the result of the above lemma in the numerical examples, we set $\epsilon \gets \sqrt{2} \epsilon_{svd}$ cf. Equation~\eqref{epsvd}. 

\begin{lemma}\label{lemma_frob_norm}%
Consider the hierarchical matrix $\bm A_D^{(i-1)} = \bm A^{(i)}_D + \bm A^{(i)}_{OD}$ where $L$ is the deepest level. Assuming the minimum singular value of the SMW correction matrix across all levels is constant, i.e. $\sigma_{min}(\bm C^{(i)}_{smw}) = \sigma_{C_{min}}~\forall i$, knowing the rank of such matrices is $2k$, i.e. rank$(\bm C^{(i)}_{smw})=2k$, and letting $\beta_{L-j}$ denote $\beta$ (defined in Lemma~\ref{lemma_UCV}) at level $L-j$, then
 \begin{equation*}
 \begin{array}{l}
\alpha_{L-i} = \| \bm A_D ^{(L-i)^{-1}} \|_F \leq \alpha_L ^{2^i} \kappa^{2i-1}\displaystyle \prod_{j=0}^{i-1} \beta_{L-j}^{2^{i-j-1}}  
\end{array}
\end{equation*}
where  $\alpha_L = \| \bm A_D^{(L)^{-1}} \|_F$ is associated with the smallest two diagonal blocks at level $L$ and $\kappa = \sqrt{2k}/\sigma_{C_{min}}$.
\end{lemma}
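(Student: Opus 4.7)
The plan is to proceed by induction on $i$, using the SMW formula at each level to derive a single-step recurrence between $\alpha_{L-i-1}$ and $\alpha_{L-i}$, and then to unroll. First, I would apply the SMW identity at level $L-i$ to the splitting $\bm A_D^{(L-i-1)} = \bm A_D^{(L-i)} + \bm U^{(L-i)} \bm C^{(L-i)} \bm V^{(L-i)}$, obtaining
\[
\bm A_D^{(L-i-1)^{-1}} \;=\; \bm A_D^{(L-i)^{-1}} \;-\; \bm A_D^{(L-i)^{-1}} \, \bm U^{(L-i)} \left(\bm C_{smw}^{(L-i)}\right)^{-1} \bm V^{(L-i)} \, \bm A_D^{(L-i)^{-1}}.
\]
Taking Frobenius norms and applying the triangle inequality together with submultiplicativity then gives
\[
\alpha_{L-i-1} \;\leq\; \alpha_{L-i} \;+\; \alpha_{L-i}^{2} \, \|\bm U^{(L-i)}\|_F \, \left\|\left(\bm C_{smw}^{(L-i)}\right)^{-1}\right\|_F \, \|\bm V^{(L-i)}\|_F.
\]

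Next, I would bound each factor appearing in the quadratic term. Since $\bm C_{smw}^{(L-i)}$ is a $2k \times 2k$ matrix of rank exactly $2k$, its inverse has $2k$ singular values, each at most $1/\sigma_{C_{min}}$, so summing their squares yields
\[
\left\|\left(\bm C_{smw}^{(L-i)}\right)^{-1}\right\|_F \;\leq\; \frac{\sqrt{2k}}{\sigma_{C_{min}}} \;=\; \kappa.
\]
The product $\|\bm U^{(L-i)}\|_F \|\bm V^{(L-i)}\|_F$ is controlled by $\beta_{L-i}$ via the block-diagonal structure in~\eqref{smw_matrix} together with the definition of $\beta$ from Lemma~\ref{lemma_UCV}. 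Combining these estimates, and absorbing the linear $\alpha_{L-i}$ term into the dominant quadratic one (under the natural assumption that the product $\kappa\, \beta_{L-i}\, \alpha_{L-i}$ is at least of order one at each level), I arrive at a single-step recurrence of the form
\[
\alpha_{L-i-1} \;\leq\; \alpha_{L-i}^{2} \, \kappa \, \beta_{L-i}.
\]

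The final step is to unroll this recurrence by induction on $i$. The base case $i=0$ is just the definition of $\alpha_L$. Squaring the right-hand side at each iteration doubles the exponent of $\alpha_L$, producing the factor $\alpha_L^{2^i}$; each $\beta_{L-j}$ is introduced at step $j$ and then squared $i-j-1$ additional times, and therefore accumulates with exponent $2^{i-j-1}$; and the powers of $\kappa$ accumulate analogously to give the claimed overall exponent. The main obstacle I anticipate is the careful bookkeeping through the recursion, and in particular justifying the bound $\|\bm U^{(L-i)}\|_F \|\bm V^{(L-i)}\|_F \leq \beta_{L-i}$: because $\beta$ is defined as a \emph{minimum} over $\|\bm U\|,\|\bm C\|,\|\bm V\|$ in Lemma~\ref{lemma_UCV}, this step must exploit the explicit block-diagonal and (partially) orthonormal structure of $\bm U$ and $\bm V$ from~\eqref{smw_matrix}, rather than a loose submultiplicative decomposition through $\bm U \bm C \bm V$ alone. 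Once this factor bound is established, the remainder of the proof is a straightforward induction.
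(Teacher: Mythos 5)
Your proposal follows essentially the same route as the paper's proof: apply the SMW identity one level at a time, take Frobenius norms with the triangle inequality and submultiplicativity, bound $\|(\bm C^{(i)}_{smw})^{-1}\|_F \le \sqrt{2k}/\sigma_{C_{min}} = \kappa$ from the rank-$2k$ assumption, drop (or absorb) the linear term $\alpha_{L-i}$ against the quadratic one, and unroll the resulting one-step recurrence. The only substantive divergence is the power of $\beta$ charged per step. The paper bounds $\|\bm A_D^{-1}\bm U\,\bm C_{smw}^{-1}\,\bm V\bm A_D^{-1}\|_F \le \beta^2\,\|\bm A_D^{-1}\|_F^2\,\|\bm C_{smw}^{-1}\|_F$, i.e.\ it charges $\beta^2$ for the pair $\bm U,\bm V$ and obtains the recurrence $\alpha_{L-i-1}\le \kappa\,\beta_{L-i}^{2}\,\alpha_{L-i}^{2}$, whereas you charge a single factor $\beta_{L-i}$. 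You are right that the step $\|\bm U\|_F\|\bm V\|_F\le\beta_{L-i}$ is the weak point --- but note that the paper does not resolve it either: since $\beta$ is defined in Lemma~\ref{lemma_UCV} as the \emph{minimum} of $\|\bm U\|,\|\bm C\|,\|\bm V\|$, neither $\beta$ nor $\beta^2$ is a legitimate upper bound for $\|\bm U\|_F\|\bm V\|_F$ without further argument (e.g.\ exploiting that the blocks of $\bm U$ and $\bm V$ in \eqref{smw_matrix} are orthonormal, so that $\|\bm U\|_F=\|\bm V\|_F=\sqrt{2k}$). One curiosity worth knowing: your $\beta^1$ recurrence unrolls to exactly the exponents $\beta_{L-j}^{2^{i-j-1}}$ appearing in the lemma, while the paper's own $\beta^2$ recurrence unrolls to $\beta_{L-j}^{2^{i-j}}$; and both recurrences give $\kappa^{2^i-1}$, which agrees with the stated $\kappa^{2i-1}$ only for $i\le 2$. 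So your induction scheme is the intended one, but the bookkeeping in the published statement is not fully consistent with either per-step bound, and you should not expect a fully rigorous derivation to land exactly on the stated exponents.
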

\begin{proof}
See~\ref{proof_lemmafrobnorm}. 
\end{proof}
Using the results of Lemma~\ref{lemma_UCV} and Lemma~\ref{lemma_frob_norm}, the following proposition states the main result for the inversion error in a hierarchical matrix:
\begin{proposition}\label{prob_frob_err}
Given the settings of the hierarchical matrix $\bm A_D^{(i-1)} = \bm A^{(i)}_D + \bm A^{(i)}_{OD}$, having the estimates for the Frobenius norm of the inverse of hierarchical matrix at level $L-i$ as $\alpha_{L-i}$ cf. Lemma~\ref{lemma_frob_norm} and the error at the deepest level denoted by $\epsilon_{D,L}=\varepsilon$, the error in matrix inversion at level $L-i$ is 
\begin{equation}\label{direct_in_text}
\epsilon_{D,L-i}  \leq \left(\prod_{j=L-i+1}^{k} a_j \right) \varepsilon + b_{L-i+1} +  \displaystyle \sum_{k=L-i+2}^{L} \left (\prod_{j=L-i+1}^{k-1} a_j \right ) b_{k} 
\end{equation}
where $a_i = \kappa \alpha_i^2 \beta_i^4$ and $b_i = 2\kappa \alpha_i^3 \beta_i^3 \epsilon_{OD,i}$. 
\end{proposition}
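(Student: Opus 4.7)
The plan is to reduce \eqref{direct_in_text} to a one-step recursion and then unroll it. Specifically, I will establish
\begin{equation*}
\epsilon_{D,L-i-1} \leq a_{L-i}\,\epsilon_{D,L-i} + b_{L-i}, \qquad i = 0, 1, \ldots, L-1,
\end{equation*}
with base case $\epsilon_{D,L} = \varepsilon$; an induction on $i$ then telescopes this into the claimed bound.

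First, I would apply the SMW identity \eqref{smw_formula} to $\bm A_D^{(L-i-1)} = \bm A_D^{(L-i)} + \bm U^{(L-i)} \bm C^{(L-i)} \bm V^{(L-i)}$. Writing $\bm M := \bm A_D^{(L-i)^{-1}}$ and $\bm G := \bm C_{smw}^{(L-i)} = \bm C^{-1} + \bm V \bm M \bm U$, this gives $\bm A_D^{(L-i-1)^{-1}} = \bm M - \bm M \bm U \bm G^{-1} \bm V \bm M$, with the computed version $\hat{\bm A}_D^{(L-i-1)^{-1}}$ obtained by hatting every factor. Subtracting the two and telescoping the five-factor product (inserting hatted factors one at a time) produces five perturbation terms; the one involving $\bm G^{-1}-\hat{\bm G}^{-1}$ is then further decomposed via the resolvent identity $\bm G^{-1}-\hat{\bm G}^{-1} = \bm G^{-1}(\hat{\bm G}-\bm G)\hat{\bm G}^{-1}$ together with a second telescope on $\bm G - \hat{\bm G} = (\bm C^{-1}-\hat{\bm C}^{-1}) + (\bm V \bm M \bm U - \hat{\bm V}\hat{\bm M}\hat{\bm U})$. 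The resulting sum splits into terms carrying a factor $\|\bm M-\hat{\bm M}\|_F = \epsilon_{D,L-i}$ and terms carrying one of $\|\bm U-\hat{\bm U}\|_F$, $\|\bm V-\hat{\bm V}\|_F$, $\|\bm C-\hat{\bm C}\|_F$, each of which is controlled by $\epsilon_{OD,L-i}$ through Lemma~\ref{lemma_UCV}.

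Next, I would bound each remaining factor via Frobenius submultiplicativity, using: (i) $\|\bm M\|_F, \|\hat{\bm M}\|_F \leq \alpha_{L-i}$ from Lemma~\ref{lemma_frob_norm}; (ii) $\|\bm G^{-1}\|_F \leq \sqrt{2k}/\sigma_{C_{min}} = \kappa$, which follows from rank$(\bm G) = 2k$ and the uniform singular-value hypothesis; and (iii) $\|\bm U\|_F$, $\|\bm V\|_F$, $\|\bm C\|_F \leq \beta_{L-i}$. Collecting the terms that carry an $\epsilon_{D,L-i}$ factor yields the linear coefficient $a_{L-i} = \kappa\,\alpha_{L-i}^2\,\beta_{L-i}^4$; collecting the remaining terms yields the additive contribution $b_{L-i} = 2\kappa\,\alpha_{L-i}^3\,\beta_{L-i}^3\,\epsilon_{OD,L-i}$. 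This establishes the one-step recursion.

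Finally, a short induction on $i$ unrolls the recursion to give
\begin{equation*}
\epsilon_{D,L-i} \leq \Bigl(\prod_{j=L-i+1}^{L} a_j\Bigr)\varepsilon + b_{L-i+1} + \sum_{k=L-i+2}^{L}\Bigl(\prod_{j=L-i+1}^{k-1} a_j\Bigr) b_k,
\end{equation*}
which is precisely \eqref{direct_in_text}. The main obstacle will be the bookkeeping in the perturbation analysis: the inner resolvent expansion of $\bm G^{-1}-\hat{\bm G}^{-1}$ reintroduces an $\bm M$-perturbation through the $\bm V \bm M \bm U$ summand of $\bm G$, and this extra pathway is what lifts the power of $\alpha_{L-i}$ in $a_{L-i}$ from one to two and produces the third power of $\alpha_{L-i}$ in $b_{L-i}$. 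Matching the accumulated constants to these precise forms, rather than settling for looser higher-order majorants, is the part that will require the most care.
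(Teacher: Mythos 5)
Your proposal follows essentially the same route as the paper: a perturbation analysis of the SMW identity using the Frobenius product-error rules, the bound $\|\bm C_{smw}^{-1}\|_F \leq \kappa$, and Lemmata~\ref{lemma_UCV} and~\ref{lemma_frob_norm}, all reduced to the one-step recursion $\epsilon_{D,i-1} \leq a_i\,\epsilon_{D,i} + b_i$ and then unrolled. The only caveats are that your resolvent identity for $\bm G^{-1}-\hat{\bm G}^{-1}$ naturally yields $\kappa^2 f_{\epsilon}(\bm G)$ rather than the single factor $\kappa f_{\epsilon}(\bm C_{smw})$ the paper uses, and that, exactly as in the paper, the clean coefficients $a_i$ and $b_i$ emerge only after absorbing the lower-order pathways of the telescoped expansion (e.g.\ the bare $\bm M-\hat{\bm M}$ term and the $\kappa\alpha\beta^2\epsilon_{D}$ terms) into the dominant ones.
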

\begin{proof}
See~\ref{proof_propfroberr}. 
\end{proof}

The above error estimate, indeed, is the error in one larger block as formed with accumulation of smaller size blocks. For example to seed the recursive formula for $n=10^4$, we first compute the error in the $n=10^3$ block, which is computed as $\epsilon_{D,0}$ starting from a $n_{min}=100$ block (which involves direct inversion); $\epsilon_{D,0}$ in that case is computed via the following direct formula:
\begin{equation}\label{direct_error2}
\epsilon_{D,L-i}  \leq b_{L-i+1} +  \displaystyle \sum_{k=L-i+2}^{L} \left (\prod_{j=L-i+1}^{k-1} a_j \right ) b_{k} 
\end{equation}
where simply $\varepsilon$ is set to zero in the beginning of the chain of errors (i.e. in~\eqref{direct_in_text}). Having the error in the matrix inverse, the error in the solution of linear solve $\bm A \bm x = \bm y$ is computed as $f_{\epsilon}(\bm x) = \epsilon_{D,0} f_{\alpha}(\bm y)$ where $\epsilon_{D,0}$ is the error in the inversion of the largest block, i.e. the original matrix.

\subsection{Cost estimate}\label{sec:cost}

The cost estimate for inversion in this section follows the ideas for cost analysis and estimates in~\cite{gra03}. As a result, the inversion cost is obtained based on the complexity of  $\mathcal{H}$-matrix multiplication operations (specifically, in our case, $\mathcal{H}$-matrix-vector multiplication) and that itself is bounded via storage cost for $\mathcal{H}$-matrices in addition to the cost of performing matrix reduction. We do not show the detailed constants associated with these bounds and only present the main complexity terms that give rise to the computational complexity in our hierarchical construction. In particular, it is shown that the computational complexity of finding $\bm x$ (in $\bm A \bm x = \bm y$) is in the form of $\mathcal{O}(n \log(n) k)$ in our HMAT approach.

First, we briefly mention the storage cost for general matrices i.e. $\mathcal{O}(mn)$ and $\mathcal{O}(k(m+n))$ for full and reduced matrices and cost of SVD analysis i.e. $\mathcal{O}(k^2(n+m))$ cf.  Algorithm~\ref{alg:rsvd_id}. Next, we present the sparsity constant and the idea of small and large leaves, which will be used to bound the storage cost. For the formal definitions of $\mathcal{H}$-tree, block $\mathcal{H}$-tree and the set of $\mathcal{H}$-matrices (denoted by $\mathcal{H}(\bm T,k)$) see Section $1$ of~\cite{gra03}.  

\begin{definition}[sparsity constant]\label{Csp}
Let $\bm T_{\mathcal{I} \times \mathcal{J}}$ be a block $\mathcal{H}$-tree based on $\bm T_{\mathcal{I}}$ and $\bm T_{\mathcal{J}}$. The sparsity constant $C_{sp}$ of $\bm T_{\mathcal{I} \times \mathcal{J}}$ is defined by
\begin{equation}
C_{sp} \coloneqq \max \{ \max_{r \in \bm T_{\mathcal{I}}} \# \{s \in \bm T_{\mathcal{J}} | r \times s \in \bm T_{\mathcal{I} \times \mathcal{J}} \}, \max_{s \in \bm T_{\mathcal{J}}} \# \{r \in \bm T_{\mathcal{I}} | r \times s \in \bm T_{\mathcal{I} \times \mathcal{J}} \} \}. 
\end{equation}
\end{definition}

In simple terms, the constant $C_{sp}$ is the maximum number of sets in an individual $\mathcal{H}$-tree at a certain level that give rise to a number of blocks in the block $\mathcal{H}$-tree. In other words, the total number of blocks in the block $\mathcal{H}$-tree, based on $\bm T_{\mathcal{I}}$ and $\bm T_{\mathcal{I}}$,  is bounded by $C_{sp}$ multiplied by the total number of (index) sets that exist in the individual $\mathcal{H}$-tree. The maximum number of blocks at any level in our hierarchical construction is $2$; therefore, an equivalent sparsity constant is $C_{sp}=2$. 

\begin{definition}[small and large leaves]
Let $\bm T$ be a block $\mathcal{H}$-tree based on $\bm T_{\mathcal{I}}$ and $\bm T_{\mathcal{J}}$. The set of \emph{small} leaves of $\bm T$ is denoted by $\mathcal{L}^{-}(\bm T) \coloneqq \{ r \times s~|~ \# r \leq n_{min} ~\text{or}~ \# s \leq n_{min}\}$, and the set of \emph{large} leaves is denoted as $\mathcal{L}^{+}(\bm T) \coloneqq \mathcal{L}(\bm T) \backslash \mathcal{L}^{-}(\bm T)$.
\end{definition}
The above definition formally distinguishes between the leaves that admit full ($\mathcal{L}^{-}(\bm T)$) and low-rank representation ($\mathcal{L}^{+}(\bm T)$). These leaves or blocks are clearly apparent in Figure~\ref{Hmat_merged}. The gray ones belong to $\mathcal{L}^{-}(\bm T)$ and the rest of blocks belong to $\mathcal{L}^{+}(\bm T)$. In addition to the above definitions, we provide the computational complexity of operations within the SMW formula as discussed in Section~\ref{Sec4_1} cf. Table~\ref{Table1}. 
\begin{table}[!h]
\caption{Computational complexity of various terms in SMW linear solve computations.}
\normalsize
\centering
\begin{tabular}{c c c c c c}
\hline\hline
  Variable & $\bm x_{D_i}$  & $\bm q_{l_i}$  & $\bm q_{lr_i}$ & $\bm q_{ry_i}$ & $\bm s_{qry_1}$  \\
\hline
Computational complexity &	$n^2$	 & $k n^2$ & $k^2 n$ & $kn$ & $k^2 $ 	\\
\hline
\end{tabular}
\label{Table1}
\end{table}
The estimates in Table~\ref{Table1} are based on the assumption that for small matrices, the inversion has the same complexity as multiplication. This assumption is one building block of the proof in Theorem 2.29 in~\cite{gra03}. This assumption can be true when considering iterative techniques such as a conjugate gradient on small linear solve problems. In those cases, the solution is found via a possibly small number of iterations involving matrix-vector operations with $n^2$ complexity. Among various terms in Table~\ref{Table1}, the most significant cost is associated with the computation of $\bm q_{l_i}$ which is equivalent to the cost of a linear solve with a square matrix with size $\mathbb{R}^{n \times n}$ and a right-hand side matrix with size $\mathbb{R}^{n \times k}$, i.e. $\bm q_{l_i} = \bm A^{-1} \bm \Gamma$. 

Next, we present the result for storage complexity which is used to bound matvec multiplication in hierarchical matrices. In the next result, the number of degrees of freedom $\# \mathcal{I}$ is replaced with $n$ and the number of levels in the tree $\# L$ is replaced with $\log(n)$. The storage requirement for a hierarchical matrix whose off-diagonal blocks have rank $k$ is denoted by $C_{\mathcal{H},St}(\bm T,k) $.
\begin{lemma}[storage complexity]\label{lemma:storage_complexity}
 Let $\bm T$ be a block $\mathcal{H}$-tree based on $\bm T_{\mathcal{I}}$ and $\bm T_{\mathcal{I}}$ with sparsity constant $\bm C_{sp}$ and minimal block size $n_{min}$. Then, the storage requirements for an $\mathcal{H}$-matrix  are bounded by:

\begin{equation}\label{eq:storage_complexity}
\begin{array}{l l l}
C_{\mathcal{H},St}(\bm T,k) & \leq & 2 C_{sp} n_{min}(\# \mathcal{I} \#L) \\
\\
& \leq & C_1 n \log(n)
\end{array}
\end{equation} 
where $C_1 \coloneqq 2C_{sp} n_{min}$.
\end{lemma}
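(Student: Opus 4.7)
The approach is the standard $\mathcal{H}$-matrix storage argument: bound the storage at each leaf, accumulate over leaves level-by-level using the sparsity constant $C_{sp}$, and multiply by the number of levels $\# L$ of the block tree.

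First I would split the total cost as a sum over leaves,
\[
C_{\mathcal{H},St}(\bm T,k) \;=\; \sum_{r\times s \in \mathcal{L}^{-}(\bm T)} \# r \cdot \# s \;+\; \sum_{r\times s \in \mathcal{L}^{+}(\bm T)} k(\# r + \# s),
\]
the first sum counting dense storage at the small leaves and the second counting the two low-rank factors at each large leaf. For a small leaf, by definition $\min(\# r,\# s)\leq n_{min}$, hence $\# r\cdot\# s \leq n_{min}(\# r + \# s)$. For a large leaf, in the natural regime $k \leq n_{min}$, one has $k(\# r + \# s) \leq n_{min}(\# r + \# s)$. Both contributions are therefore uniformly dominated by $n_{min}(\# r + \# s)$, giving
\[
C_{\mathcal{H},St}(\bm T,k) \;\leq\; n_{min} \sum_{r \times s \in \mathcal{L}(\bm T)} (\# r + \# s).
\]

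Next I would reorganize the sum by level. Grouping leaves by the level $l \in \{0,\ldots,\# L-1\}$ of $\bm T$ at which they lie, the row nodes $r$ appearing on level $l$ come from level $l$ of $\bm T_{\mathcal{I}}$, which partitions $\mathcal{I}$, so $\sum_{r}\# r \leq \# \mathcal{I}$, and analogously for the column nodes $s$. Definition \ref{Csp} says a single node $r$ participates in at most $C_{sp}$ blocks on that level, and likewise for $s$, hence
\[
\sum_{r\times s \text{ at level } l}(\# r + \# s) \;\leq\; C_{sp}\sum_{r}\# r + C_{sp}\sum_{s}\# s \;\leq\; 2C_{sp}\# \mathcal{I}.
\]
Summing this over all $\# L$ levels and multiplying by $n_{min}$ yields the first claimed bound $C_{\mathcal{H},St}(\bm T,k)\leq 2C_{sp}n_{min}(\# \mathcal{I})(\# L)$. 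The second inequality then follows from $\# \mathcal{I}=n$ together with the observation in Section \ref{S2_2} that the dyadic subdivision halts once blocks reach size $\eta$, giving $\# L = O(\log n)$, and setting $C_1 \coloneqq 2C_{sp}n_{min}$.

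The only subtle point to verify is the implicit assumption $k \leq n_{min}$ used when bounding large leaves; if this fails, $n_{min}$ in $C_1$ must be replaced by $\max(k, n_{min})$, which preserves the $\mathcal{O}(n\log n)$ scaling but enlarges the constant. Otherwise the argument is a level-by-level accounting that retraces the analogous storage estimate in the $\mathcal{H}$-matrix literature \cite{gra03} specialized to the present $C_{sp}=2$ binary tree.
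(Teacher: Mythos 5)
Your proof is correct and follows essentially the same route as the paper's: split the storage into dense small leaves and rank-$k$ large leaves, dominate both by $n_{min}(\#r+\#s)$, then reorganize the sum level-by-level using the sparsity constant and the fact that each level of $\bm T_{\mathcal{I}}$ partitions $\mathcal{I}$. Your explicit flagging of the implicit assumption $k \leq n_{min}$ (which the paper uses silently when passing from $k(\#r+\#s)$ to $n_{min}(\#r+\#s)$) is a welcome clarification, and your bound $\#r\cdot\#s \leq n_{min}(\#r+\#s)$ via $\min(\#r,\#s)\leq n_{min}$ is actually slightly cleaner than the paper's intermediate $n_{min}^2$ step.
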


\begin{proof}
See~\ref{prooflemma:storage_complexity}.
\end{proof}

\begin{lemma}[matrix-vector product]\label{lemma:matvec}
Let $\bm T$ be a block $\mathcal{H}$-tree. The complexity $C_{\mathcal{H} \cdot v}(\bm T,k)$ of the matrix-vector product for an $\mathcal{H}$-matrix  $\bm A \in \mathcal{H}(\bm T,k)$  can be bounded from above and below by
\begin{equation}
C_{\mathcal{H},St}(\bm T,k) \leq  C_{\mathcal{H} \cdot v}(\bm T,k) \leq 2 C_{\mathcal{H},St}(\bm T,k).
\end{equation}
\end{lemma}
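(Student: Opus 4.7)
The plan is to express both $C_{\mathcal{H}\cdot v}(\bm T,k)$ and $C_{\mathcal{H},St}(\bm T,k)$ as sums over the leaves of the block $\mathcal{H}$-tree $\bm T$, and then bound the matvec cost leaf-by-leaf in terms of the storage cost on the same leaf. Writing $\mathcal{L}(\bm T) = \mathcal{L}^{-}(\bm T) \cup \mathcal{L}^{+}(\bm T)$, a matrix-vector product $\bm A \bm x$ decomposes as a sum of block-wise products $\bm A|_{r \times s} \bm x|_s$ over $r \times s \in \mathcal{L}(\bm T)$, and the storage decomposes analogously. So it suffices to show that for each leaf block the number of flops in the corresponding matvec lies between $N_{\mathrm{st}}$ and $2 N_{\mathrm{st}}$, where $N_{\mathrm{st}}$ is the number of scalars stored for that block.

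For the lower bound I would argue directly: every stored scalar (an entry of a full block in $\mathcal{L}^{-}$, or an entry of one of the factor matrices $\bm U, \bm V$ in $\mathcal{L}^{+}$) must participate in at least one multiplication during the product, because a nontrivial dependence on that scalar is necessary in the linear map that $\bm A$ implements on $\bm x$. Summing these per-block inequalities gives $C_{\mathcal{H},St}(\bm T,k) \leq C_{\mathcal{H}\cdot v}(\bm T,k)$.

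For the upper bound I would count flops explicitly on each leaf. On a full $m \times n$ block $r \times s \in \mathcal{L}^{-}(\bm T)$, the standard matvec uses $mn$ multiplications and $m(n-1)$ additions, for a total of at most $2mn$ flops, which is $2 N_{\mathrm{st}}$ since storage is $mn$. On a rank-$k$ block $r \times s \in \mathcal{L}^{+}(\bm T)$ stored as $\bm U \bm V^{T}$ with $\bm U \in \mathbb{R}^{m \times k}$, $\bm V \in \mathbb{R}^{n \times k}$, one computes $\bm V^{T} \bm x|_s \in \mathbb{R}^k$ in at most $2kn$ flops and then $\bm U \cdot (\bm V^{T} \bm x|_s) \in \mathbb{R}^m$ in at most $2km$ flops, giving $2k(m+n)$, which is again $2 N_{\mathrm{st}}$ since the storage for this block is $k(m+n)$. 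Summing over all leaves yields $C_{\mathcal{H}\cdot v}(\bm T,k) \leq 2 C_{\mathcal{H},St}(\bm T,k)$.

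The only non-mechanical piece is the bookkeeping that the per-leaf multiplication and addition counts are genuinely bounded by twice the per-leaf storage in both the full and the rank-$k$ case; this is where I expect to be a little careful, because the inner product inside the rank-$k$ application has one fewer addition than multiplication, and one must verify that the combined count $k(m+n) + k(m+n) - (m+n) \leq 2k(m+n)$ holds (which it does trivially for $k \geq 1$). Once these per-block estimates are in place, the claim follows by linearity of storage and flop count over the leaf partition, with no dependence on the sparsity constant $C_{sp}$ or the depth of $\bm T$.
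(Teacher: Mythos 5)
Your proposal is correct and follows essentially the same route as the paper: both decompose the matvec over the leaves of the block tree and compare per-leaf flop counts ($\approx 2mn$ for a full block, $\approx 2k(m+n)$ for a reduced block) against the per-leaf storage ($mn$ and $k(m+n)$ respectively). The only difference is cosmetic bookkeeping in how multiplications and additions are apportioned, and your explicit lower-bound remark (each stored scalar enters at least one multiplication) is implicit in the paper's counts.
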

\begin{proof}
See~\ref{prooflemma:matvec}.
\end{proof}

\begin{lemma}[truncation]\label{lemma:truncation}
Let $\bm T$ be a block $\mathcal{H}$-tree based on the $\mathcal{H}$-trees $\bm T_{\mathcal{I}}$ and $\bm T_{\mathcal{J}}$. A trancation of an $\mathcal{H}$-matrix  $\bm A \in \mathcal{H}(\bm T,k)$ can be computed with complexity $C_{\mathcal{H},Trunc} \leq k C_{\mathcal{H},St}(\bm T,k)$.
\end{lemma}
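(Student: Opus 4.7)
The plan is to reduce the global truncation cost to a per-block sum over the block $\mathcal{H}$-tree $\bm T$, then bound each block contribution by $k$ times the storage of that block, and finally recognize the resulting sum as $k \cdot C_{\mathcal{H},St}(\bm T,k)$. Concretely, I would write
\begin{equation*}
C_{\mathcal{H},Trunc}(\bm T,k) = \sum_{b \in \mathcal{L}^{+}(\bm T)} c_{\mathrm{Trunc}}(b) + \sum_{b \in \mathcal{L}^{-}(\bm T)} c_{\mathrm{Trunc}}(b),
\end{equation*}
splitting the admissible (low-rank) leaves from the inadmissible (full) leaves exactly as in Definition of small/large leaves used for the storage bound.

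For each large leaf $b = r \times s \in \mathcal{L}^{+}(\bm T)$, the local block is held in factored form $\bm U_b \bm V_b^{T}$ with $\bm U_b \in \mathbb{R}^{\#r \times k_b}$, $\bm V_b \in \mathbb{R}^{\#s \times k_b}$ and working rank $k_b = O(k)$ (in particular $k_b \le 2k$ for outputs of SMW/off-diagonal recompressions in Section~\ref{Sec4_1}). Rank-$k$ truncation is then standard: (i) QR-factorize $\bm U_b$ and $\bm V_b$ at cost $O(k^{2}(\#r + \#s))$; (ii) compute a (thin) SVD of the resulting $k_b \times k_b$ middle matrix at cost $O(k^{3})$; (iii) apply the truncation and multiply the orthogonal factors back at cost $O(k^{2}(\#r + \#s))$. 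Summing the three contributions gives $c_{\mathrm{Trunc}}(b) \leq C \, k \cdot k (\#r + \#s)$, i.e.\ exactly $k$ times the per-block storage $k(\#r + \#s)$ of a low-rank block in $\mathcal{H}(\bm T,k)$.

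For each small leaf $b \in \mathcal{L}^{-}(\bm T)$, we have $\min(\#r,\#s) \le n_{\min}$, and no reduction beyond the natural SVD of a dense block is needed; the cost of that SVD is $O(\#r \#s \cdot \min(\#r,\#s)) \le n_{\min} (\#r \#s)$. Under the standing $\mathcal{H}$-matrix convention $n_{\min} \le k$ (the same assumption that is implicit in Lemma~\ref{lemma:storage_complexity} when the storage of a small leaf is charged as $\#r \#s$), this gives $c_{\mathrm{Trunc}}(b) \le k (\#r \#s)$, again $k$ times the storage of that block. Combining the two cases,
\begin{equation*}
C_{\mathcal{H},Trunc}(\bm T,k) \le k \Bigl( \sum_{b \in \mathcal{L}^{+}(\bm T)} k(\#r + \#s) + \sum_{b \in \mathcal{L}^{-}(\bm T)} \#r \#s \Bigr) = k \, C_{\mathcal{H},St}(\bm T,k).
\end{equation*}

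The only non-routine point is justifying $k_b = O(k)$ uniformly across the large leaves; I would dispatch this by observing that truncation is always applied to objects whose nominal rank is at most a small multiple of $k$ (sums, products, or SMW-updated factorizations of $\mathcal{H}(\bm T,k)$ matrices, as used throughout Section~\ref{Sec4_1}), so that absorbing this constant into the $O(\cdot)$ does not break the final bound. The small-leaf case is essentially bookkeeping once $n_{\min}$ is treated as $O(k)$, and it is the large-leaf per-block factor-of-$k$ overhead (QR + $k_b \times k_b$ SVD + back-multiplication) that drives the $k$ that appears in front of $C_{\mathcal{H},St}(\bm T,k)$.
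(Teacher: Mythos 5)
Your large-leaf argument is exactly the paper's proof: the per-block truncation cost (QR on the factors, an $O(k^3)$ SVD of the small core, and reassembly) is $O(k^2(\#r+\#s))$, i.e.\ $k$ times the per-block storage $k(\#r+\#s)$, and summing over $\mathcal{L}^{+}(\bm T)$ gives $k\,C_{\mathcal{H},St}$ via the second line of \eqref{eq:storage_complexity}. The one place you deviate is the small-leaf term, and that deviation introduces a problem: you charge a dense SVD of cost $\leq n_{\min}\,\#r\,\#s$ to each $b\in\mathcal{L}^{-}(\bm T)$ and then invoke a convention $n_{\min}\leq k$ to absorb it. That convention is the reverse of what this paper actually uses --- the chain of inequalities in the proof of Lemma~\ref{lemma:storage_complexity} bounds $k(\#r+\#s)$ by $n_{\min}(\#r+\#s)$, which requires $k\leq n_{\min}$, and the numerical settings ($n_{\min}$ or $\eta$ of order $100$--$1050$ against $k\leq 70$) confirm that $n_{\min}\leq k$ does not hold here. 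The fix is simply to drop that term: small leaves are by definition kept in full form and are never truncated, so they contribute nothing to $C_{\mathcal{H},Trunc}$, and the bound follows from the large-leaf sum alone, which is precisely what the paper's one-line proof asserts. Your justification that the working rank $k_b$ entering each truncation is $O(k)$ is a reasonable and slightly more careful accounting than the paper gives, and it does not affect the conclusion.
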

\begin{proof}
See~\ref{prooflemma:truncation}.
\end{proof}

Using the results in Lemmata~\ref{lemma:storage_complexity},~\ref{lemma:matvec} and~\ref{lemma:truncation}, we bound the cost of linear solve $\bm A \bm x = \bm y$ in the form of $\mathcal{O}(n\log(n) k)$:  

\begin{theorem}\label{theorem:cost}
Let $\bm T$ be a block $\mathcal{H}$-tree of the index set $\mathcal{I} \times \mathcal{I}$ with $\# \mathcal{I} = n$, $\bm A \in \mathcal{H}(\bm T,k)$ an $\mathcal{H}$-matrix and $\bm y \in \mathbb{R}^n$ be a vector. We assume that the linear solve associated with the small blocks  $r \times s \in \mathcal{L}^{-1}(\bm T)$ has the same complexity as matrix-vector multiplication. Then, the cost of computing $\bm A^{-1}\bm y$ denoted by $C_{\mathcal{H}, \backslash}$ is bounded by
\begin{equation}
C_{\mathcal{H}, \backslash} \leq 3 C_1 k n \log(n).
\end{equation}
\end{theorem}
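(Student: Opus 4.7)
The plan is to decompose the hierarchical linear solve into two easily-bounded pieces --- an $\mathcal{H}$-matrix--vector multiplication carrying $O(k)$ right-hand sides, plus one $\mathcal{H}$-matrix truncation --- and then assemble the stated constant by combining Lemmata~\ref{lemma:storage_complexity},~\ref{lemma:matvec}, and~\ref{lemma:truncation}.

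First I would re-read the recursion \eqref{smw_eq}--\eqref{Csmw} as a single forward pass through the block $\mathcal{H}$-tree $\bm T$. At each non-leaf node, the child solves $\bm A_D^{-1}\bm y$ and $\bm A_D^{-1}\bm U$ are both required; as the \texttt{back\_solve} implementation shows, they are fused into one recursive call with right-hand side obtained by concatenating each half of $\bm y$ with the corresponding $\bm \Gamma_l$ or $\bm \Gamma_r$. Because each $\bm \Gamma$ has only $k$ columns, the effective width of the right-hand side never exceeds $O(k)$ throughout the descent, and by the theorem's hypothesis the leaf-level solves on blocks in $\mathcal{L}^{-}(\bm T)$ are bounded by the corresponding matvec cost. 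Consequently the entire descent has the complexity of applying $\bm A^{-1}$, viewed as an operator factored along the levels of $\bm T$, to a block right-hand side of width $O(k)$, which by Lemma~\ref{lemma:matvec} is bounded by $2 k \, C_{\mathcal{H},St}(\bm T,k)$.

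The remaining, level-local work consists of forming the $2k \times 2k$ correction matrix $\bm C_{smw}$, performing its small inversion, and reinjecting the rank-$k$ update $\bm q_{l_i}\bm s_{qry_i}$ at every level; the per-level costs are the ones catalogued in Table~\ref{Table1}. Summed over all nodes of $\bm T$ these are precisely the operations bounded by one $\mathcal{H}$-matrix truncation in $\mathcal{H}(\bm T,k)$, so by Lemma~\ref{lemma:truncation} their total cost is at most $k\, C_{\mathcal{H},St}(\bm T,k)$. Combining the two contributions and applying Lemma~\ref{lemma:storage_complexity} to convert $C_{\mathcal{H},St}$ into the explicit $C_1 n \log(n)$ bound yields
\begin{equation*}
C_{\mathcal{H},\backslash} \;\leq\; 2 k \, C_{\mathcal{H},St}(\bm T,k) + k \, C_{\mathcal{H},St}(\bm T,k) \;\leq\; 3 C_1 k n \log(n),
\end{equation*}
which is the stated bound.

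The main obstacle is justifying the first step: that the SMW-based recursion really reduces to one $\mathcal{H}$-matrix application on $O(k)$ right-hand sides without hidden multiplicative factors. In particular, one must verify that augmenting the right-hand side with the off-diagonal bases $\bm \Gamma$ contributes work linear in $k$ rather than $k^2$ (the bases are simply passed unchanged down the tree and contracted against only at their own level), and that the sparsity constant $C_{sp}=2$ already absorbed into $C_1$ correctly controls the per-level small-block work listed in Table~\ref{Table1}. These are bookkeeping issues rather than deep obstructions, but they are precisely the places where the argument could easily pick up an unwanted factor of $\log n$ or $k$, which is why the cleanest route is to reduce to the matvec and truncation lemmas rather than to sum cost contributions level-by-level from scratch.
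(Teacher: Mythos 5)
Your overall architecture coincides with the paper's: split the cost into a piece bounded by $k$ applications of the matrix--vector product, $k\,C_{\mathcal{H}\cdot v}\leq 2k\,C_{\mathcal{H},St}\leq 2C_1kn\log(n)$, plus a piece bounded by one truncation, $k\,C_{\mathcal{H},St}\leq C_1kn\log(n)$, and add. The arithmetic is identical. But you have assigned the truncation budget to the wrong operations, and in doing so you drop a cost component of the same order as the final bound. In the paper, the term $k\,C_{\mathcal{H},St}$ pays for \emph{converting the matrix into hierarchical form}, i.e.\ the randomized-SVD compression of every off-diagonal block performed by \texttt{rsvd\_id} inside the recursion; the proof of Lemma~\ref{lemma:truncation} is explicitly a bound on "the cost of truncation (or SVD analysis) in large leaves, which is in the form of $k^2(n+m)$." Your accounting never charges for this compression anywhere. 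Instead you spend the truncation budget on the level-local SMW arithmetic (forming $\bm C_{smw}$, its small inversion, the rank-$k$ reinjections), which the paper does not bill separately at all --- it absorbs those operations into the inductive $2C_1kn\log(n)$ term on the grounds that they "have smaller complexity compared to computation of $\bm q_{l_2}$." As written, your two pieces therefore do not cover the whole algorithm; restoring the compression cost would push your constant to $4C_1$ unless you re-absorb the SMW corrections into the matvec piece as the paper does. The fix is mechanical (re-label which operations each lemma covers), but it is a real gap in the ledger, not mere bookkeeping.

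A secondary inaccuracy: your assertion that "the effective width of the right-hand side never exceeds $O(k)$ throughout the descent" is not literally true. Each recursive call to \texttt{back\_solve} concatenates a fresh $k$-column block $\bm\Gamma$ onto the right-hand side (see Algorithm~\ref{alg:smw_ing}), so at depth $\ell$ the width is $1+\ell k$, which reaches $O(k\log n)$ at the deepest levels. The paper avoids confronting this by phrasing its induction around the single $k$-column solve $\bm q_{l_2}=\bm A_{22}^{-1}\bm\Gamma_r$ introduced at each level, each of which propagates only through the subtree below it. You correctly identified this as the crux of the argument, but then dismissed it with a claim that is false as stated; since this is exactly where, as you say, "an unwanted factor of $\log n$ or $k$" could enter, it needs the subtree-by-subtree accounting rather than a blanket width bound.
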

\begin{proof}
The proof adopts the same idea as the proofs of Theorems 2.24 and 2.29 in~\cite{gra03}, which bound the complexity of factorizing the inverse of an $\mathcal{H}$-matrix with the complexity of matrix-matrix multiplication for two $\mathcal{H}$-matrices. The cost involves two main parts: 1) the truncation cost for the $\mathcal{H}$-matrix; and once the matrix is in the hierarchical form i.e. large leaves are reduced matrices, 2) the cost of the inversion according to SMW formula~\eqref{smw_eq} bounded by the matrix-vector multiplication complexity. The statement in the second part is proven by induction over the depth of the tree. 
\begin{itemize}
\item Truncation of a full matrix to $\mathcal{H}$-matrix: This cost is provided in Lemma~\eqref{lemma:truncation}: $C_{\mathcal{H},Trunc} \leq k C_{\mathcal{H},St}\stackrel{\text{Lemma}~\eqref{lemma:storage_complexity}}{\leq}  C_1 k n \log(n)$.
\item Cost of hierarchical inversions (linear solves) for an $\mathcal{H}$-matrix: To explain the induction, consider a hierarchical matrix that at the deepest level is a single square small block. The matrix gets larger by adding (similar size) diagonal and off-diagonal blocks to previous levels, similarly to the discussion for hierarchical error estimation. It is assumed that the cost of linear solve for small blocks is bounded by matrix-vector multiplication. This is justified in~\cite{gra03} by considering the smallest possible scenario which is $n_{min}=1$. In this scenario, both inversion and multiplication involve one elementary operation. However, in our case the base of the induction is satisfied if $n^2 < kn\log(n)$ i.e. $n < k \log(n)$~\footnote{$n^2$ is the complexity of the matvec operation and $kn\log(n)$ is the target bound.}, which is typically satisfied for small enough $n$ and large enough $k$. 

For larger matrices, we assume that a large $\bm A_{22}$ yields the linear solve $\bm q_{l_2}$ with complexity bounded by $k C_{\mathcal{H} \cdot v}$. We make this assumption by considering that computing $\bm q_{l_2} \in \mathbb{R}^{n \times k}$ involves $k$ time $\mathcal{H}$-matrix-vector multiplication i.e. $k C_{\mathcal{H} \cdot v} \leq 2C_1 k n \log(n)$. Note that the most significant cost in SMW computations is associated with computation of $\bm q_{l_2}$. This term also appeared as the most significant term in our error analysis. Adding another block i.e. going up in the hierarchy, we perform computations associated with a smaller block plus SMW correction computations that have smaller complexity compared to computation of $\bm q_{l_2}$. Therefore, it is concluded that the complexity of the inversion of an $\mathcal{H}$-matrix via SMW computations is bounded by  $2C_1 k n \log(n)$.
\end{itemize}
Adding two parts of the computational complexity yields $C_{\mathcal{H}, \backslash} \leq 3 C_1 k n \log(n)$.
\end{proof}

\subsection{Computation of log likelihood and its derivative}~\label{Sec4_4}
In this subsection, we discuss the computation of the energy term $\bm y^T \bm A^{-1} \bm y$, the log of the determinant, i.e. $\log(\text{det}(\bm A))$ and their sensitivities. Before proceeding further, we emphasize that all sensitivity derivations stem from two main expressions
\begin{equation}
\begin{array}{l l}
\displaystyle \frac{\partial \bm A^{-1}}{\partial \ell} = \displaystyle  -{\bm A^{-1}} \frac{\partial \bm A}{\partial \ell} \bm A^{-1}, &
\displaystyle  \frac{\partial \log(\text{det}(\bm A))}{ \partial \ell } =  Tr\left(\bm A^{-1} \displaystyle \frac{\partial \bm A}{\partial \ell} \right )
\end{array}
\end{equation}
with the application of the chain rule.

To compute the log determinant, we introduce the following notation: Denoting the eigenvalues of a square matrix $\bm A \in \mathbb{R}^{n \times n}$ by $\{\sigma_i\}_{i=1}^n$, the log of determinant is computed by $\Lambda = f_{\Lambda}(\bm A) = \log(\text{det}(\bm A)) =\log( \prod_{i=1}^n \sigma_i )=\sum_{i=1}^n \log(\sigma_i)$.

Multiplying $[\bm y^T_{01}~ \bm y^T_{02}]$ (where the notation $\bm y_{01}$ denotes the first column of $\bm y_1$) to the left of Equation~\eqref{smw_eq} and taking the $\log$ of Equation~\eqref{smw_formula}.b, i.e. $\log(\text{det}(\bm A))= \log(\text{det}(\bm A_D)) + \log(\text{det}(\bm C))+\log(\text{det}(\bm C^{-1}+\bm V \bm A_D^{-1} \bm U))$, the energy term and log determinant are computed hierarchically as 
\begin{equation}~\label{smw_eq_energy}
\begin{array}{l l}
(\text{Energy}) & {\Pi}=\Pi_1 + \Pi_2   - \left([\bm y^T_{01} \bm q_{l1} \quad \bm y^T_{02} \bm q_{l_2}]~\bm C^{-1}_{smw}  \left [ \begin{array}{l}
 \bm q^T_{l_2} \bm y_{02}  \\
 \\
 \bm q^T_{l_1} \bm y_{01}
 \end{array} \right ] \right) \\
(\text{Sum~log~eigs}) & \Lambda = \Lambda_1 + \Lambda_2 + 2 f_{\Lambda}(\bm \Gamma_m) + f_{\Lambda}\left(\bm C_{smw} \right )
 \end{array}
\end{equation}
where  $\Pi_1 \coloneqq  \bm y^T_{01} \bm x_{D_{01}} \in \mathbb{R},~\Pi_2 \coloneqq  \bm y^T_{02} \bm x_{D_{02}} \in \mathbb{R},~\Lambda_1 \coloneqq f_{\Lambda}(\bm A_{11}) \in \mathbb{R},~\Lambda_2 \coloneqq f_{\Lambda}(\bm A_{22}) \in \mathbb{R}$. Defining $T_1= \Pi_1 + \Pi_2$ (T for term), and
\begin{equation}
\begin{array}{l c l}
T_{21} =  [\bm y^T_{01} \bm q_{l1} \quad \bm y^T_{02} \bm q_{l_2}] ,& 
T_{22} = \bm C^{-1}_{smw},  &
T_{23} = [\bm y^T_{02} \bm q_{l_2}  \quad \bm y^T_{01} \bm q_{l_1} ] ^T
\end{array}
\end{equation}
sensitivities of $\Pi$ and $\Lambda$ are written as
\begin{equation}\label{pi_lambda_der}
\begin{array}{l l l}
\displaystyle \frac{\partial \Pi}{\partial \ell} & = &\displaystyle  \frac{\partial \Pi_1}{\partial \ell} + \frac{\partial \Pi_2}{\partial \ell} - \frac{\partial T_{21}}{\partial \ell} T_{22} T_{23}-T_{21} \frac{\partial T_{22}}{\partial \ell} T_{23} -  T_{21} T_{22} \frac{\partial T_{23}}{\partial \ell}\\
\\
\displaystyle \frac{\partial \Lambda}{\partial \ell} &=& \displaystyle   \frac{ \partial \Lambda_1} {\partial \ell} + \frac{ \partial \Lambda_2} {\partial \ell} + 2~\text{sum}\left(\text{diag}({\partial \bm \Gamma_m}/{\partial \ell}) \oslash \text{diag}(\bm \Gamma_m) \right )+ Tr\left(\bm C^{-1}_{smw} \left[\frac{\partial \bm C_{smw}}{\partial \ell}\right]\right )
\end{array}
\end{equation}
where $\oslash$ denotes Hadamard division. The sensitivity computation for $\Pi$ and $\Lambda$ depends on several terms e.g. $\partial T_{21}/\partial \ell$ and ${\partial \bm C_{smw}}/{\partial \ell}$. The derivations for these terms are provided in Appendix~\ref{Appendix_loglkl_d}. The derivative terms such as ${\partial \Pi_1}/{\partial \ell}$ are computed hierarchically, and they depend on the hierarchical computation of $(\partial \bm A/\partial \ell)\bm x$ where $\bm x$ is the solution to the linear solve problem which can be in the form of a vector or matrix. The key part of sensitivity computation is the hierarchical computation of $(\partial \bm A/\partial \ell)\bm x$. The detailed pseudocodes for the calculation of likelihood and its gradient are provided in~\ref{App_algs}. 

\subsection{GP regression}~\label{Sec4_5} 
The (log) likelihood (cf. Equation~\eqref{first_eq}) optimization is possible after finding $\Pi$, $\Lambda$, and their gradients. To compute $\mathcal{L}$, we replace only $\bm A$ (in all previous expressions) with $\tilde{\bm A} \coloneqq \bm A + \sigma_n^2 \bm {I}$, where $\sigma_n$ is a fixed regularization parameter (or noise parameter in GP), set by the user. We also distinguish between training and test data by subscripts $._{\circ}$ and $._{\ast}$., i.e. $\bm y_{\circ}$ and $\bm y_{\ast}$, denote the vectors of training and test data, respectively.

We find the optimal hyperparameter $\ell^{\ast}$ by maximizing $\mathcal{L}$ (in particular, we minimize $-\mathcal{L}$). After finding the optimal hyperparameter $\ell^{\ast}$, we compute the mean and variance of the GP test dataset~\cite{Rasmussen06} via $\bm y_{\ast | \circ}  = \bar{\bm y}_{\ast} + \bm \Sigma_{\ast \circ} \bm \Sigma^{-1}_{\circ \circ} (\bm y_{\circ} - \bar{\bm y}_{\circ})$ and $\bm \Sigma_{\ast | \circ}  =  \text{diag}(\bm \Sigma_{\ast \ast}) -  \text{diag}(\bm \Sigma_{\ast \circ} \bm \Sigma_{\circ \circ}^{-1}  \bm \Sigma_{\circ \ast})$
where $\bar{\bm y}_{\circ}$ is the sample average of the training dataset and $\bar{\bm y}_{\ast}$ is defined analogously. As the sample average of the test dataset is not known \emph{a priori}, we assume a simple model for this average and set $\bar{\bm y}_{\ast} = \bar{\bm y}_{\circ}$. The covariance matrix $\bm \Sigma_{\circ \circ} \equiv \bm A + \sigma_n^2 \bm{I}$ is the hierarchical matrix that is inverted in a linear solve format, which is the main subject of this paper. Computation in the first equation is straightforward as  $\bm \Sigma^{-1}_{\circ \circ} (\bm y_{\circ} - \bar{\bm y}_{\circ})$ is already in a linear solve format. However, depending on the number of data points in the test dataset, the matrix $\bm \Sigma_{\ast \circ}$ may need to be reduced. In the GP-HMAT package, we have provided both options, one with the full computation on the covariance matrix between training and test datasets $\bm \Sigma_{\ast \circ}$ and one with reducing $\bm \Sigma_{\ast \circ}$ via randomized SVD with ID (cf. Algorithm~\ref{alg:rsvd_id}) using the rank parameter $k$ as set by the user for off-diagonal matrix factorization.

\section{Numerical experiments}\label{Sec5}

\subsection{Illustrative numerical example}\label{S5_1}
In this example, we illustrate some important aspects of our hierarchical approach. The majority of results throughout this example are reported using a fixed setting: The GP nodes are considered as uniformly distributed points in the 2D square $\bm N \in [0,1]^2$. The right-hand side $\bm y$ is also a randomly generated (uniformly distributed) vector with $\| \bm y\|_2=1$. The normalized errors are computed with respect to \emph{true} solutions that are obtained from direct inversions. We consider only one hyperparameter $\ell$ in this example and, if not stated, results are associated with the squared exponential kernel. We provide results associated with data in higher dimensions i.e. $d=10$ and $d=4$ in the second and third numerical examples cf. Sections~\ref{S5_1} and~\ref{S5_2}. A number of other empirical studies are provided in~\ref{numexI}.

\textbf{Smoothness of kernels and the effect of permutation on rank:} In the first experiment, we consider $n=1500$ nodes to compute the kernel matrix. In the left pane of Figure~\ref{smoothness_rank}, the decay of kernel value with respect to Euclidean distance for three different kernels, i.e. squared exponential, exponential, and a kernel that considers $l_1$ distance as a distance function, is shown. In the first two kernels, $f=\|\bm x_i - \bm x_j \|_2^2/\ell^2$ and we set $\ell=0.3$. To generate the data, we compute the kernel values $k(\bm x_0, \bm x)$ where $\bm x_0 =[0,0]$. From this figure, it is seen that the last kernel with $l_1$ distance function exhibits non-smooth decay with respect to $r$, i.e. the $l_2$ distance. The first two kernels are clearly more smooth in terms of decay of the kernel; however, the more important factor is the rank of submatrices, in particular, the off-diagonal blocks that are generated using these kernels. 

To study the rank of off-diagonal blocks, we decompose the nodes with size $\# \mathcal{I} = 1500$ to two aggregates with predefined sizes of $\# \mathcal{I}_1 = 500$ and $\# \mathcal{I}_2 = 1000$ and we consider two scenarios: 1) we do not permute the original nodes, i.e. we consider the first $500$ random nodes as the first aggregate and the rest i.e. $1000$ random nodes as the second aggregate; the singular value decay of the resulting kernel matrix $\bm A_{12}$ (i.e. $k(\bm x_1, \bm x_2)$ with $\bm x_1 \equiv \bm N(:,\mathcal{I}_1)$ and $\bm x_2 \equiv \bm N(:,\mathcal{I}_2))$  is shown in the middle pane of Figure~\ref{smoothness_rank}, and 2) we permute the original nodes with index $\mathcal{I}$ by applying the algorithm described in Section~\ref{S2_2}, referred to as \texttt{permute}. The singular value decay of the resulting kernel matrix $\bm A_{12}$ is shown in the right pane of Figure~\ref{smoothness_rank}.  

The rank of $\bm A_{12}$ (using the built-in ${rank}$ function) for three cases in the middle pane is $133,500,500$. After permutation with the \texttt{permute} algorithm, the resulting rank of $\bm A_{12}$ is $85,206,500$. It is apparent that the last case, i.e. the one with $l_1$ distance function is always a full rank matrix and the permutation has no impact on it. However, node permutation has a favorable effect on the rank of the squared exponential kernel and even more impact on the exponential kernel. 
\begin{remark_new}\label{rem_main1}
Determining the rank of a matrix $\bm A \in \mathbb{R}^{m \times n}$ (for large $m$ and $n$) is an intensive task. Assuming it is similar to the scalability of full SVD, the cost scales as $\mathcal{O}(nm^2)$ (where $m < n$). A more computationally efficient proxy to rank is the numerical (or stable) rank defined as $r(\bm A) = \|\bm A \|_F^2/\| \bm A \|^2_2$ where $r(\bm A) \leq rank(\bm A)$,~$\forall~\bm A$~\cite{Rudelson2007}. Although numerical rank is efficient, it is not informative in terms of revealing the actual rank of the matrix in the above experiment. Therefore, for unstructured GP matrices, the computation of actual rank is recommended.
\end{remark_new}

These results motivate more in-depth numerical research on the hierarchical low-rank partitioning of matrices e.g. possibly some direct nuclear norm optimization of the matrix. We deem our partitioning to be efficient, easily implementable, and sufficient for the scope of current paper. 

\begin{figure}[h]
\centering
\includegraphics[width=1.5in]{./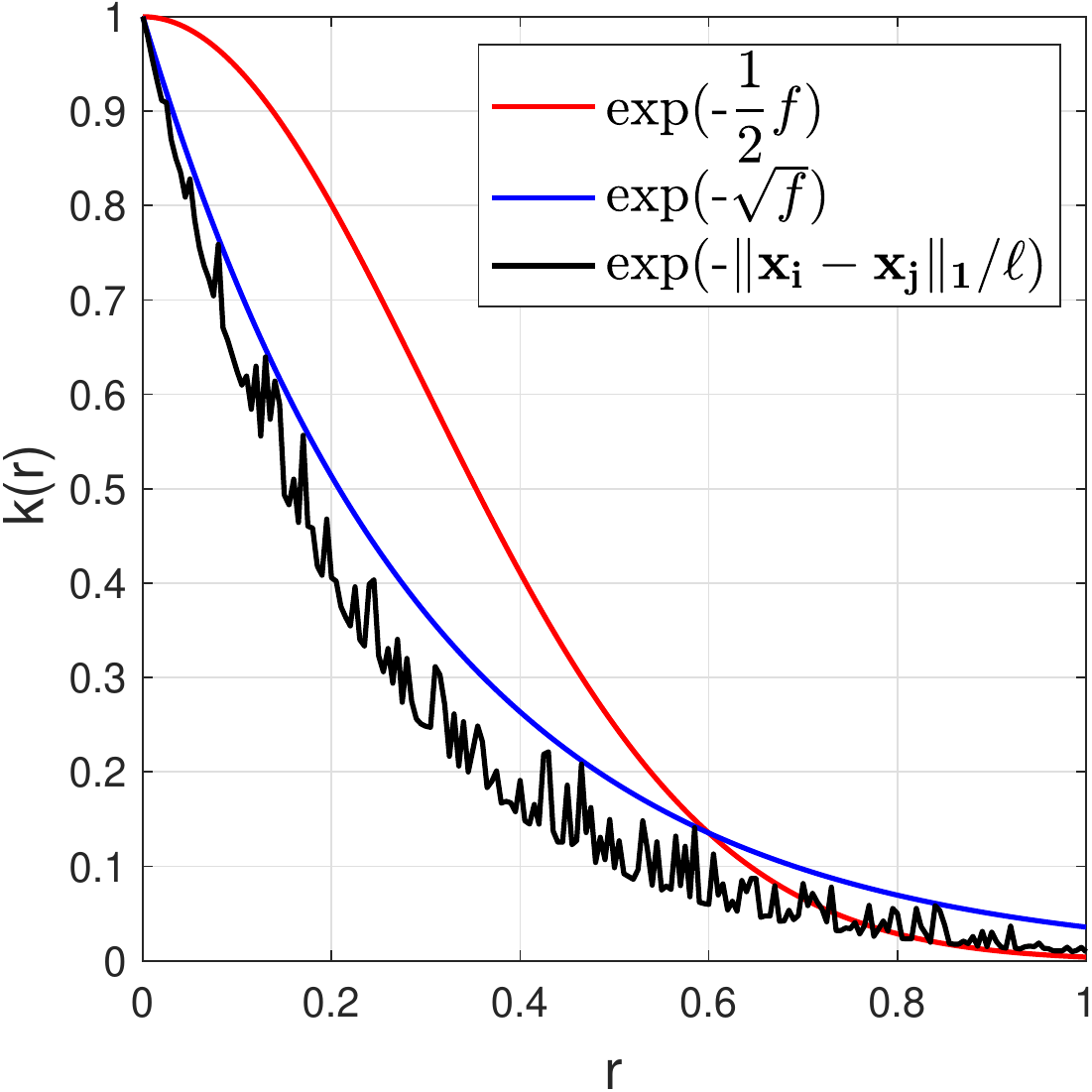}
\includegraphics[width=1.57in]{./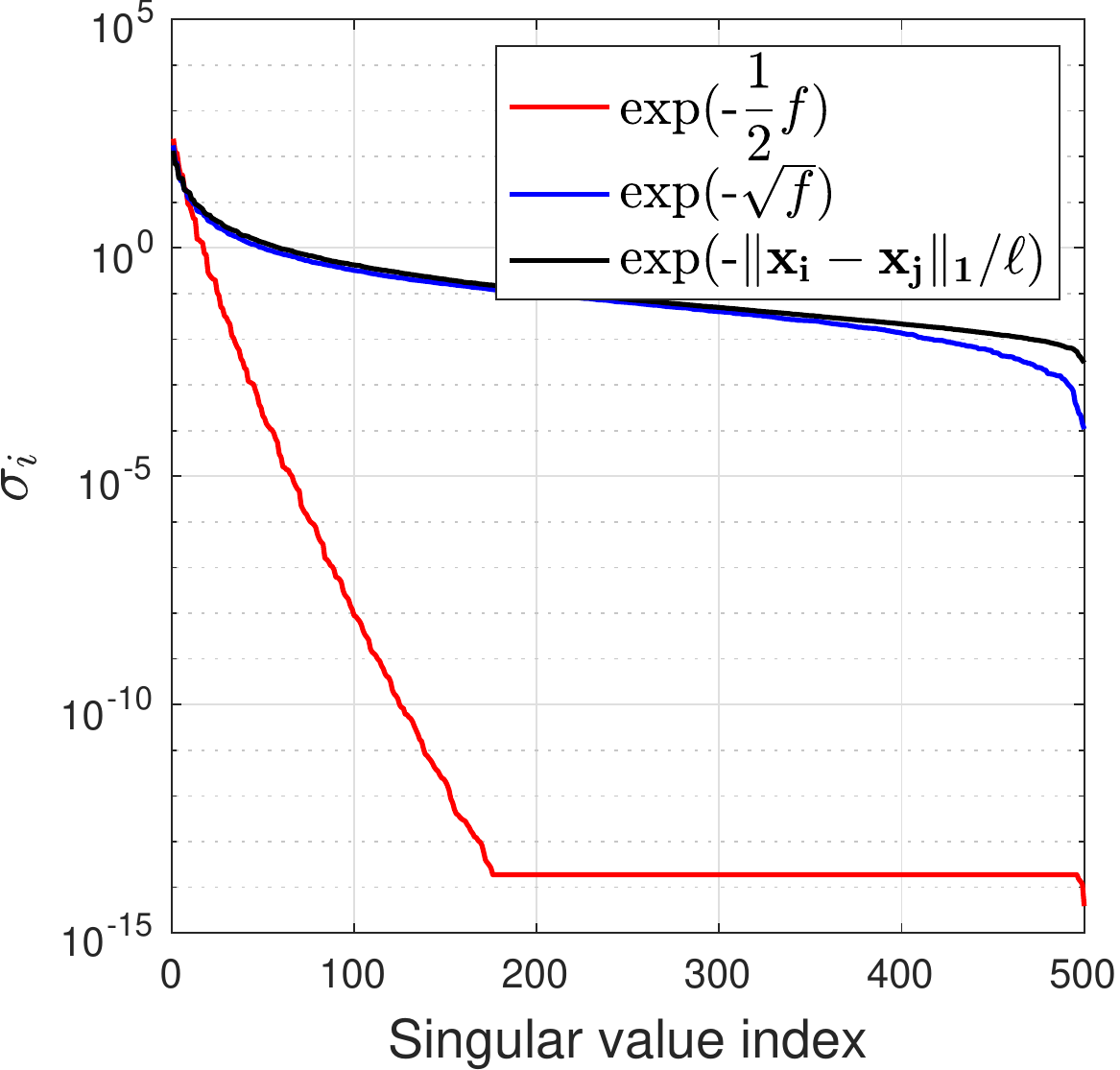}
\includegraphics[width=1.56in]{./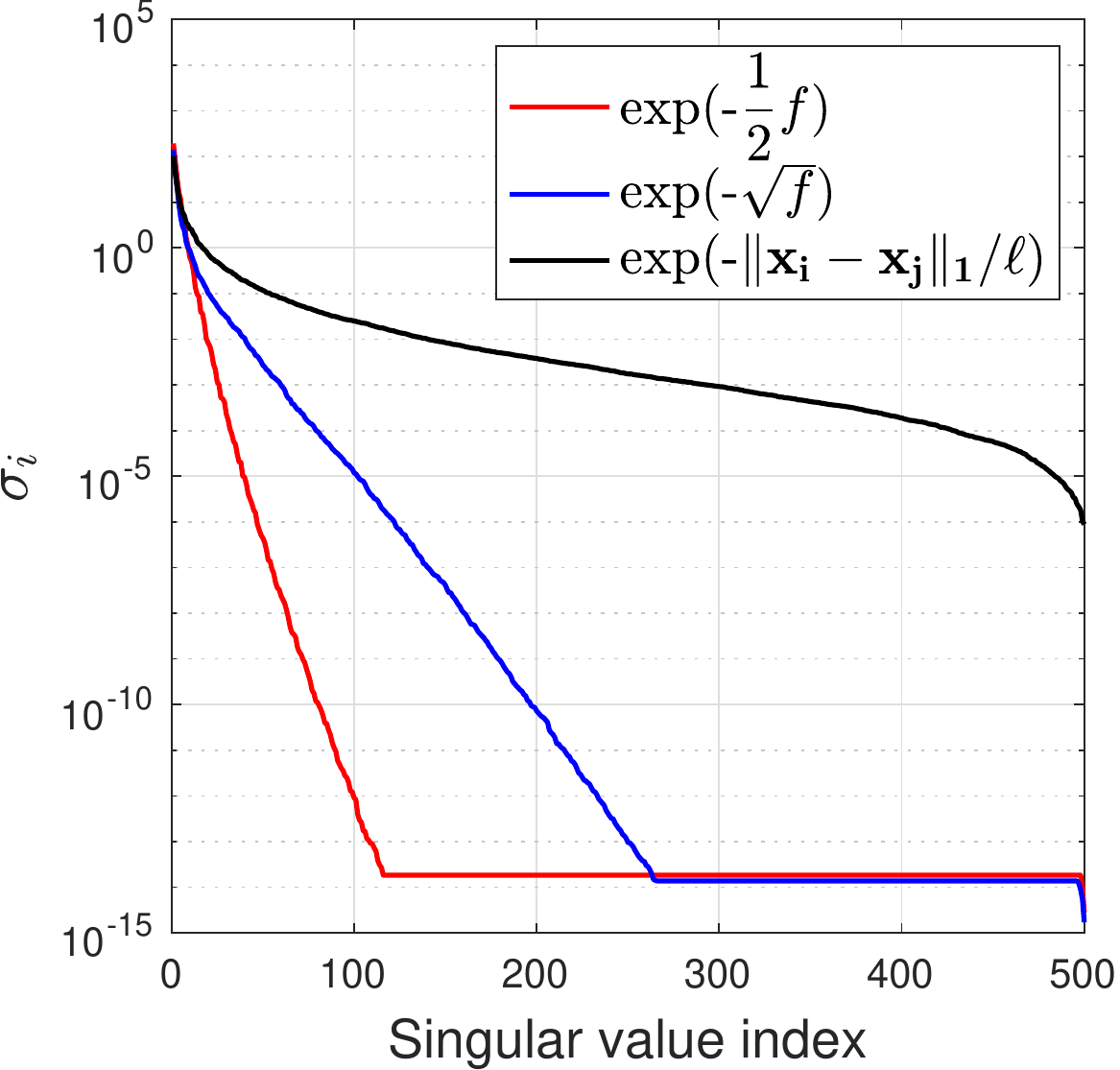}
\caption{\small{Kernel smoothness and the effect of permutation on three kernels.}}\label{smoothness_rank}
\end{figure}

\textbf{Practical Recommendation:} As a practical recommendation for utilizing the approach for inversion of different kernels, one can perform a simple similar test of one-level dyadic decomposition of a small size matrix and inspect rank $k$ of the off-diagonal block $\bm A_{12} \in \mathbb{R}^{m \times n}$ before and after application of the \texttt{permute} procedure (the cost is insignificant for small $m$ and $n$ cf. Remark~\ref{rem_main1}). If after permutation, $k \ll \min\{m,n\}$ (e.g. the red curves associated with squared exponential kernel) is observed, the overall framework will be successful however if the rank reduction is negligible i.e. $k \simeq \min\{m,n\}$ (e.g. the black curves associated with $l_1$ distances), the low-rank factorization of off-diagonal blocks yields inaccurate (or erroneous) results for inversion and determinant computation. In those instances, a different class of approaches, i.e. iterative approaches such as conjugate gradient, might be more effective; however, the rate of convergence in those approaches is also dependent on the decay rate of singular values~\cite{Saad2003}. 

\textbf{Comparison with respect to (1) aggregation strategy and (2) low-rank approximation:}
In this part of the example, we investigate the performance of two main building blocks of our framework with available approaches in the literature: \\
\textbf{(1) Aggregation strategy:} We test accuracy and timing of our aggregation strategy, \texttt{permute}, with the maximal independent set: MIS$(1)$ and MIS$(2)$~\cite{Bell12} cf.~\ref{App_MIS}. The test is performed on a matrix with $n=5000$ and $\eta=1050$. The MIS approach depends on a tunable aggregation parameter denoted by $\theta$. This parameter results in different numbers and sizes of aggregates. In each case, we sort the aggregates based on their size, select the largest as the first aggregate and the combination of the rest as the second one. In cases where the procedure results in one aggregate, we divide the aggregate into two equal size aggregates (roughly equal for odd sizes). The first plot in Figure~\ref{permute_acc} shows the accuracy in the solution of $\bm A \bm x = \bm y$ for two choices of coarse and fine aggregation parameter i.e. $\theta=0.3,~0.9$. The accuracy of the three approaches are almost similar (although the coarse $\theta$ exhibits larger errors), but to demonstrate the effectiveness of our approach we specifically measure only the aggregation time using the three approaches within the hierarchical construction for a wide range of $\theta$, i.e. $\theta \in [0.25,~0.95]$. The efficiency of \texttt{permute} is apparent compared to other approaches.  This experiment also shows that our aggregation strategy is not dependent on $\theta$. It always divides the input set to two sets with pre-defined sizes cf. Equation~\eqref{sl_size}, which results in more efficiency.  \\
\textbf{(2) Low-rank approximation:} As mentioned in Section~\ref{Sec1},  the Nystr\"{o}m approximation and its variants have been extensively used in the GP literature. To test this approach, we replace our low-rank factorization approach, randomized SVD with ID denoted by RSVD\_ID, with two versions of the Nystr\"{o}m approximation. In the Nystr\"{o}m approximation, an efficient factorization is achieved via $ k (\bm x_l, \bm x_r) \simeq  k(\bm x_l,\bm x_m)   k^{-1}(\bm x_m,\bm x_m)  k(\bm x_m,\bm x_r)$ where the nodes $\bm x_l \in \mathbb{R}^{d \times n_1}$ and $\bm x_r \in \mathbb{R}^{d \times n_2}$ are known. The main difficulty, however,  is the determination of  $\bm x_m \in \mathbb{R}^{d \times k}$.  We consider two scenarios for $\bm x_m$: 1) $k$ uniformly distributed random nodes $\bm x_m \in U[0,1]^2$ denoted by Nystr\"{o}m-Rand (similarly to~\cite{JieChen21}), and 2) first obtaining randomized range finder $\bm Q$ from $\bm Y = k(\bm x_l,\bm x_r) \bm \Omega$, i.e. $[\bm Q, \bm R_Y] \gets qr (\bm Y)$ and then finding $k$ pivots i.e. important row indices $\mathcal{I}_k$ by performing pivoted QR factorization on $\bm Q^T$ cf. Section~\ref{Sec3}, which yields $\bm x_m \gets \bm x_l(:,\mathcal{I}_k)$.  The second scenario is denoted by Nystr\"{o}m-QR. From the third plot in Figure~\ref{permute_acc}, which shows the normalized error in $\bm x$, it is again obvious that the randomized SVD approach significantly outperforms Nystr\"{o}m approximations. This empirical result is indeed in accordance with the fundamental result of the Eckart–Young theorem~\cite{Eckart36}, which states that the best rank $k$ approximation to $\bm A$ (in the spectral or Frobenius norm),  is given by its \emph{singular value decomposition} with $k$ terms.  
 
\begin{figure}[h]
\centering
\includegraphics[width=1.6in]{./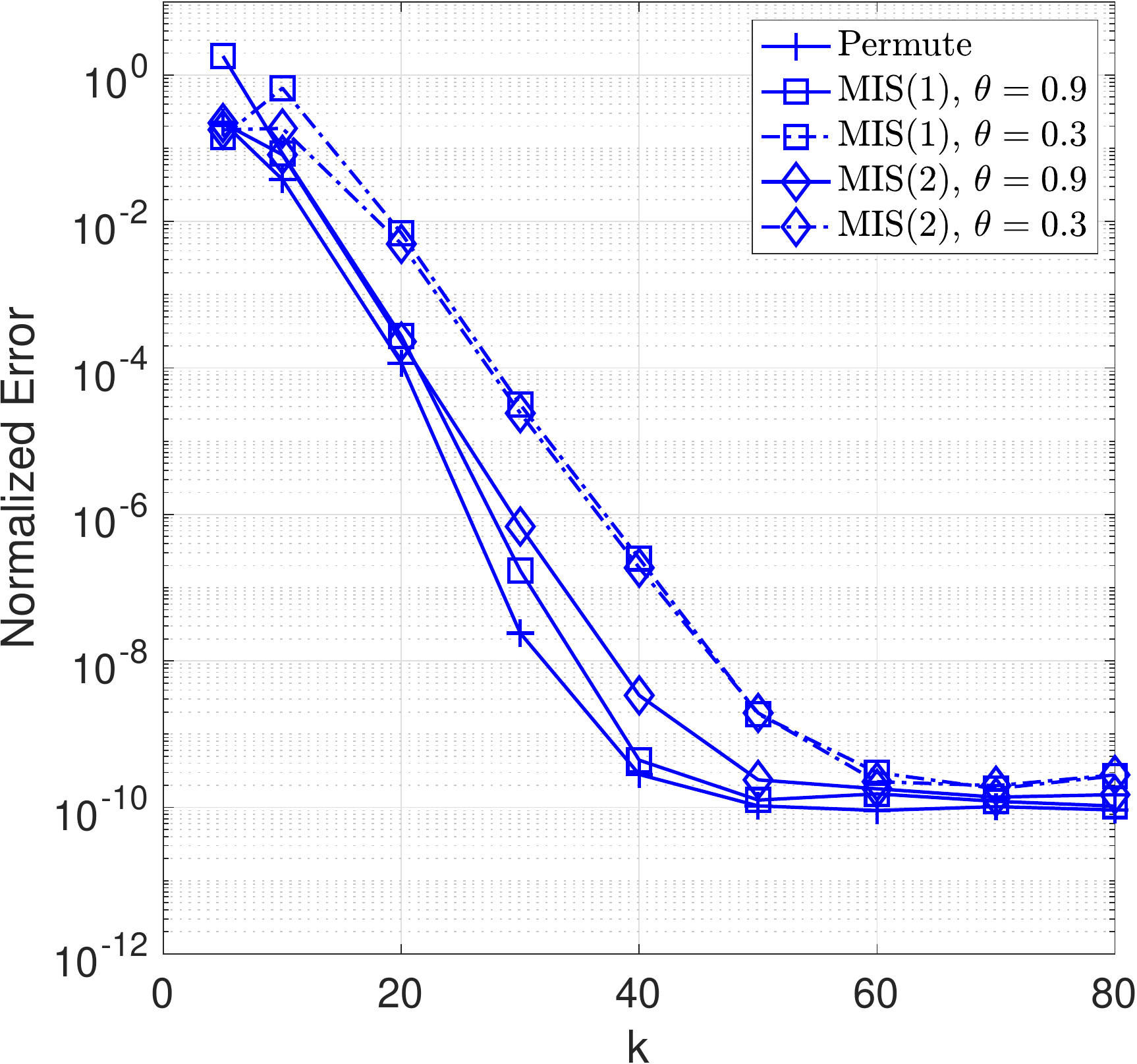}
\includegraphics[width=1.6in]{./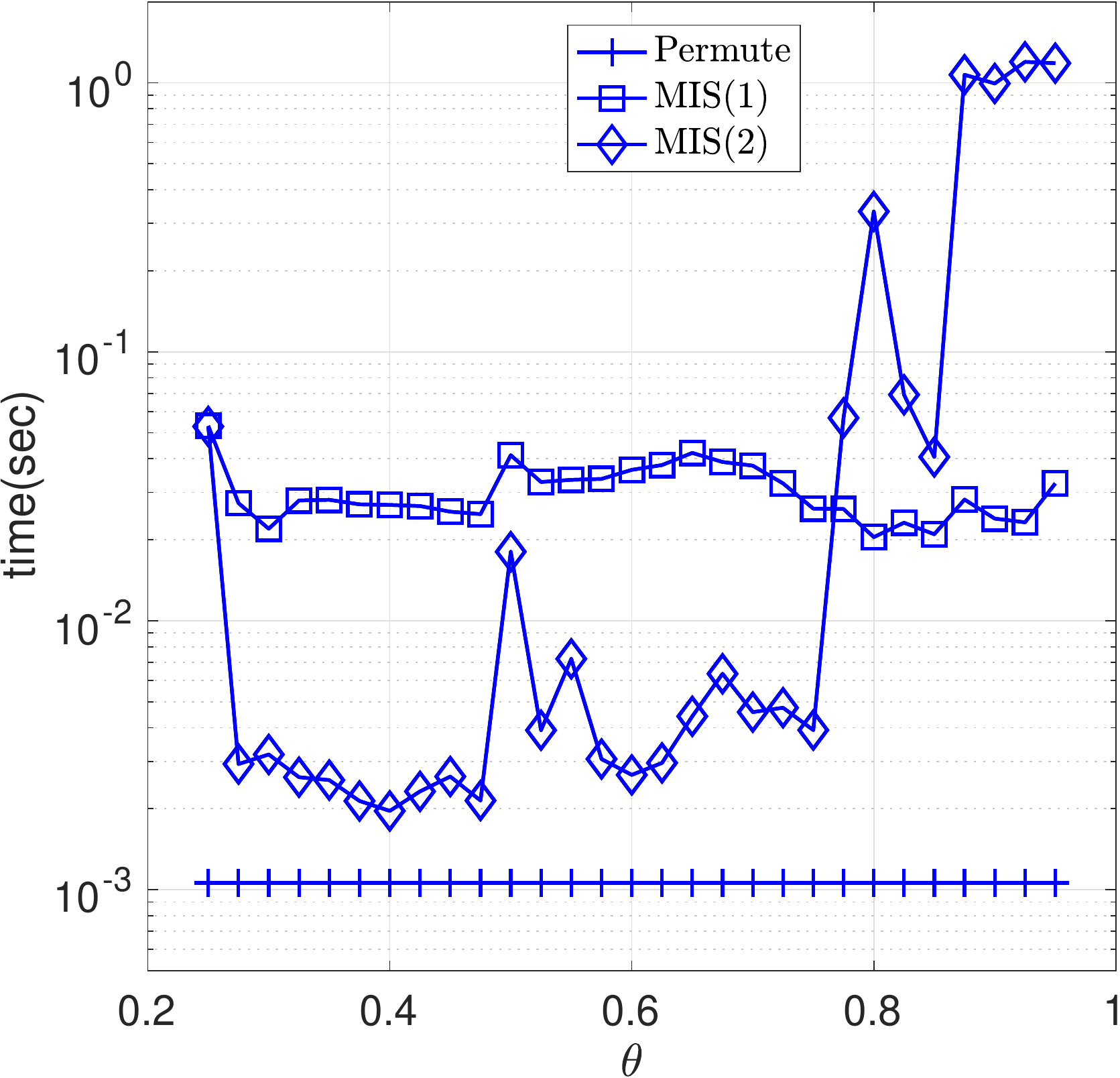}
\includegraphics[width=1.6in]{./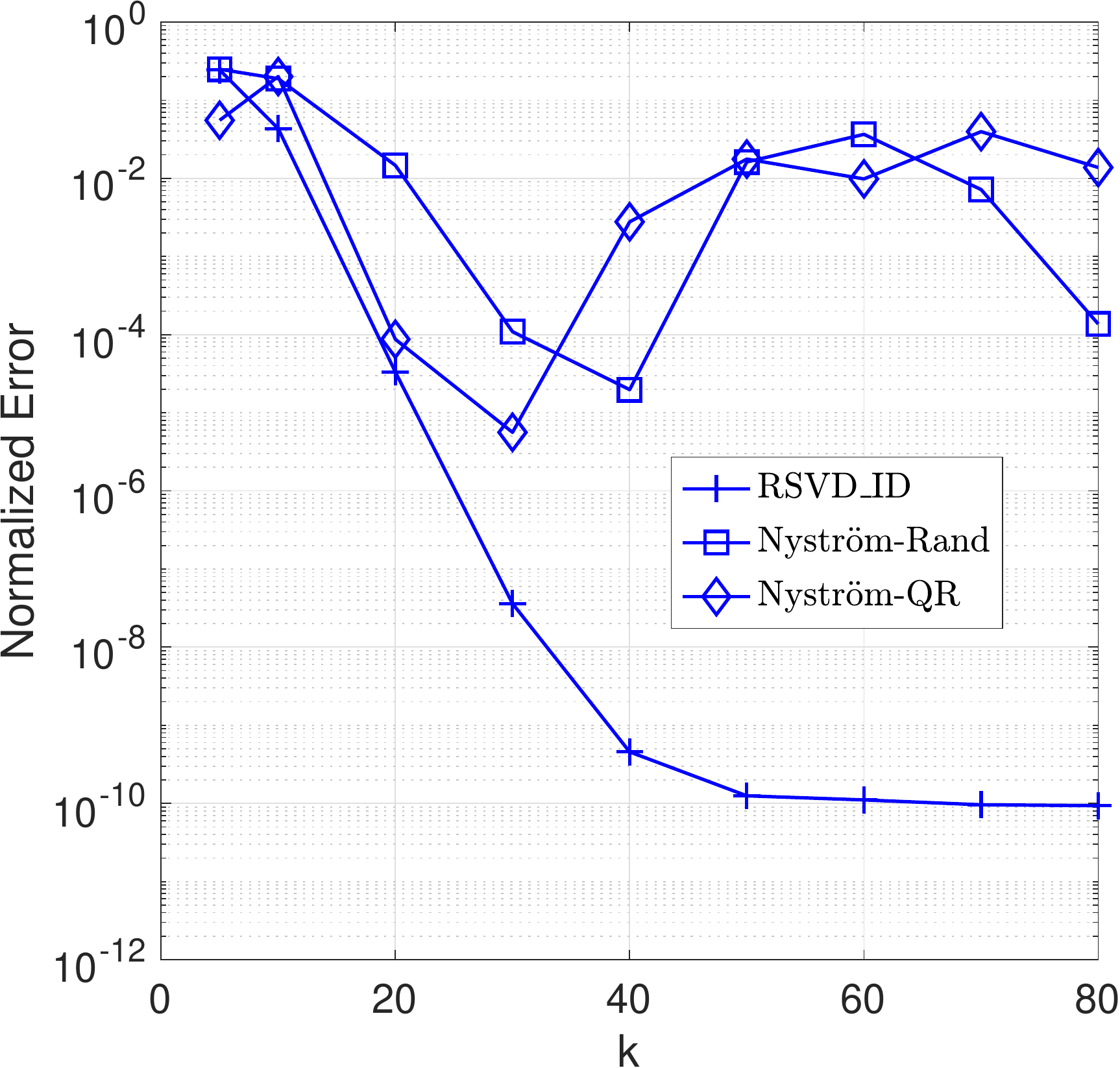}
\caption{\small{Accuracy and computation time of three aggregation strategies (left and middle panes). Comparison of accuracy between randomized SVD and Nystr\"{o}m approximations for low-rank factorization in the linear solve $\bm A \bm x= \bm y$.}}\label{permute_acc}
\end{figure}

\textbf{Empirical cost - scalability:} A major aspect of hierarchical matrix representation is the cost of the approach, which we empirically study via wall-clock time (real time) in this part of the example. In Section~\ref{sec:cost},
we found an estimate in the form of $\mathcal{O}(n \log(n) k)$ where $k$ is the rank in the off-diagonal blocks. The theoretical scalability has linear dependence on $n \log(n)$ and $k$. To study the scalability with respect to these terms, we time the code (via tic-toc in MATLAB) for different choices of $n$ and $k$. In particular, we consider $n=\{10^5,~2 \times10^5,~5 \times 10^5,~1\times 10^6\}$ and $k=\{5,~10,~20,~30,~50\}$, and set $\eta=1050$. The expression for $\nu$ is according to~\eqref{sl_size}.  

Figure~\ref{scalability} (left pane) shows the relationship between actual time and $n \log(n)$. For every choice of $k$, we find the slope of the fitted line (in the log-log scale), denoted by $r$, via least square on the pair $(\log(n\log(n)),\log(t))$ where $t$ is the wall-clock time. As seen, the scalability of the code slightly degrades as $k$ increases. We note that these are actual/empirical results from the code and they depend on the implementation and the platform in which the code is implemented. Nonetheless, for a relatively large $k$, i.e. $k=50$ we deem the slope $r \simeq 1.1$ and as a result the scalability of almost $\mathcal{O}(n \log(n))$, promising for our empirical results. 

We also compute the scalability with respect to $k$ for different choices of $n$, cf. middle pane in Figure~\ref{scalability}. While the relationship between $k$ and time in log-log scale is not necessarily linear as seen in the plot, we perform the same line fitting and find slopes for different cases of $n$. The slope for the scalability of $k$ is around $r=0.5$. Our theoretical cost estimate shows linear scalability with respect to $k$ while the reality is more promising i.e. the empirical scalability based on these experiments is $\mathcal{O}(n \log(n) \sqrt{k})$.

Finally, to compare the code with built-in MATLAB capabilities i.e. $\backslash$, as a baseline approach, we time kernel matrix computation, $\backslash$ execution, and the sum of two operations with respect to different sizes. Note that the data points are associated with significantly smaller $n$ as a direct inversion of larger size matrices e.g. $n=5\times 10^4$, can be very time consuming (or impossible). From these results, the HMAT solver is clearly superior in terms of both scalability and actual time for matrices with large sizes. A sufficiently accurate estimate (i.e. $k=50$) for $n=10^5$ is obtained in almost $10$ seconds with HMAT while using built-in capabilities, but it takes more than $10$ seconds to find a solution for only $n=1.5\times 10^4$ which involves both (vectorized, i.e. no for loop) kernel computation and $\backslash$.  
  
\begin{figure}[h]
\centering
\includegraphics[width=1.66in]{./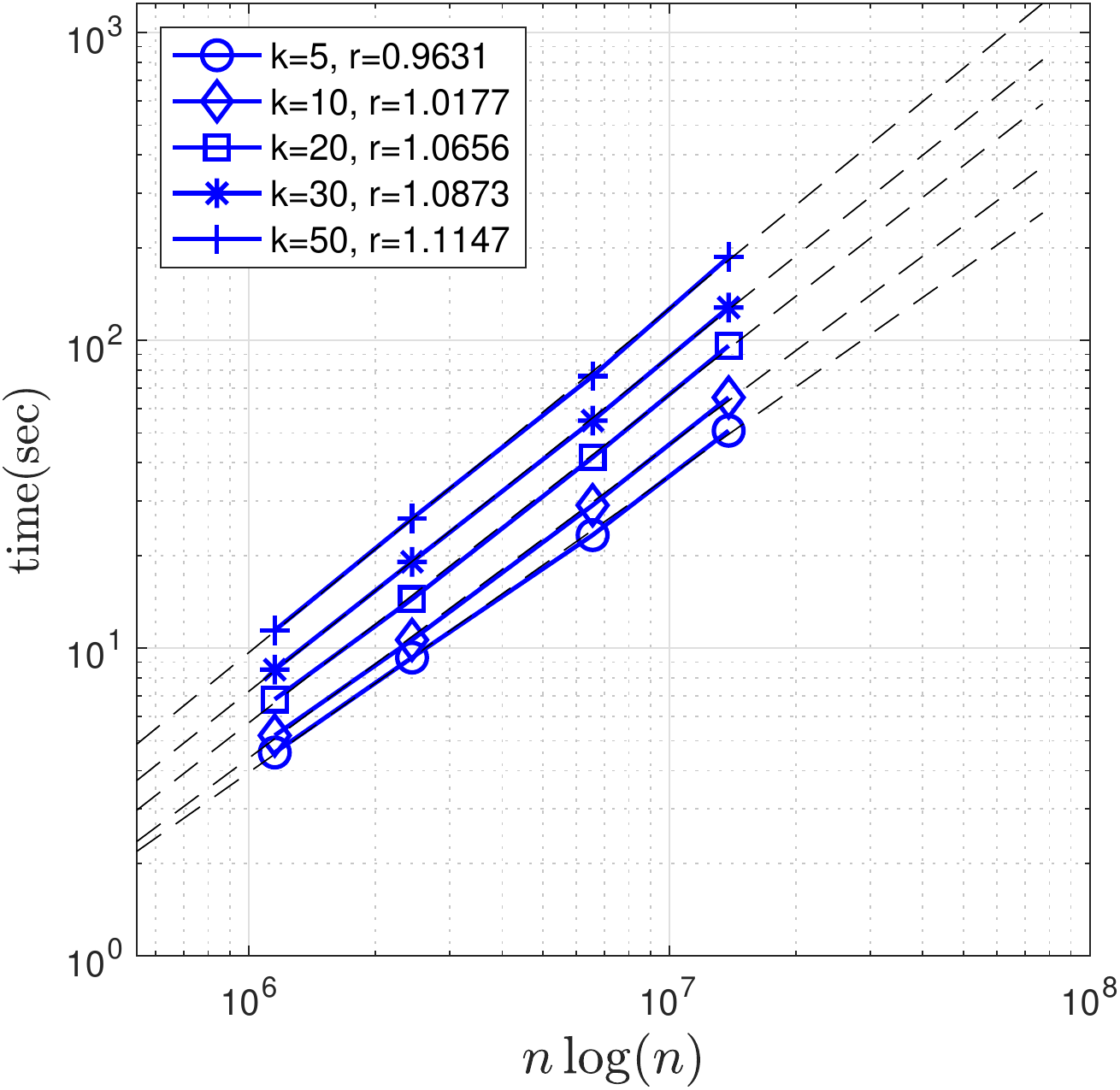}
\includegraphics[width=1.66in]{./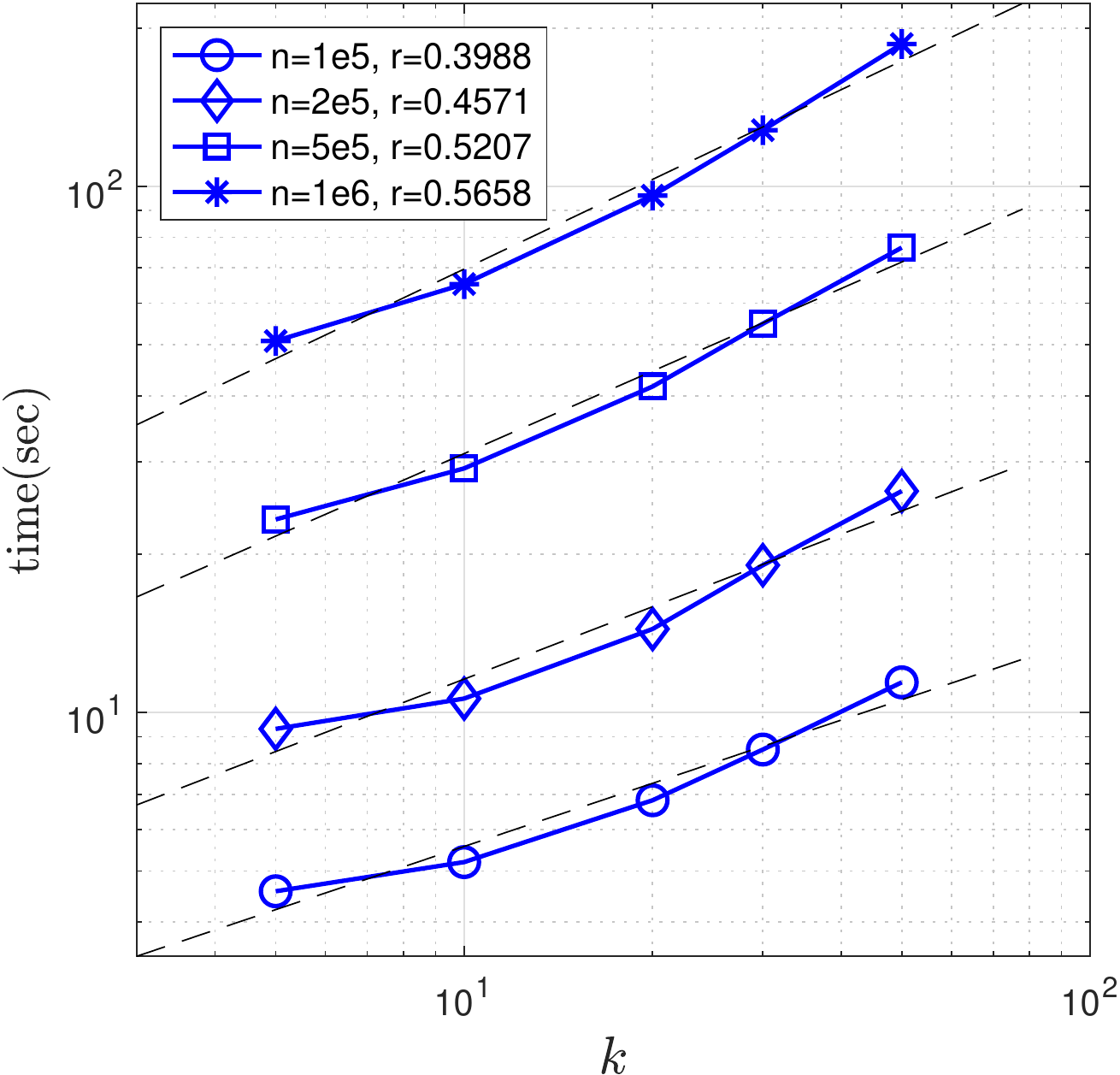}
\includegraphics[width=1.66in]{./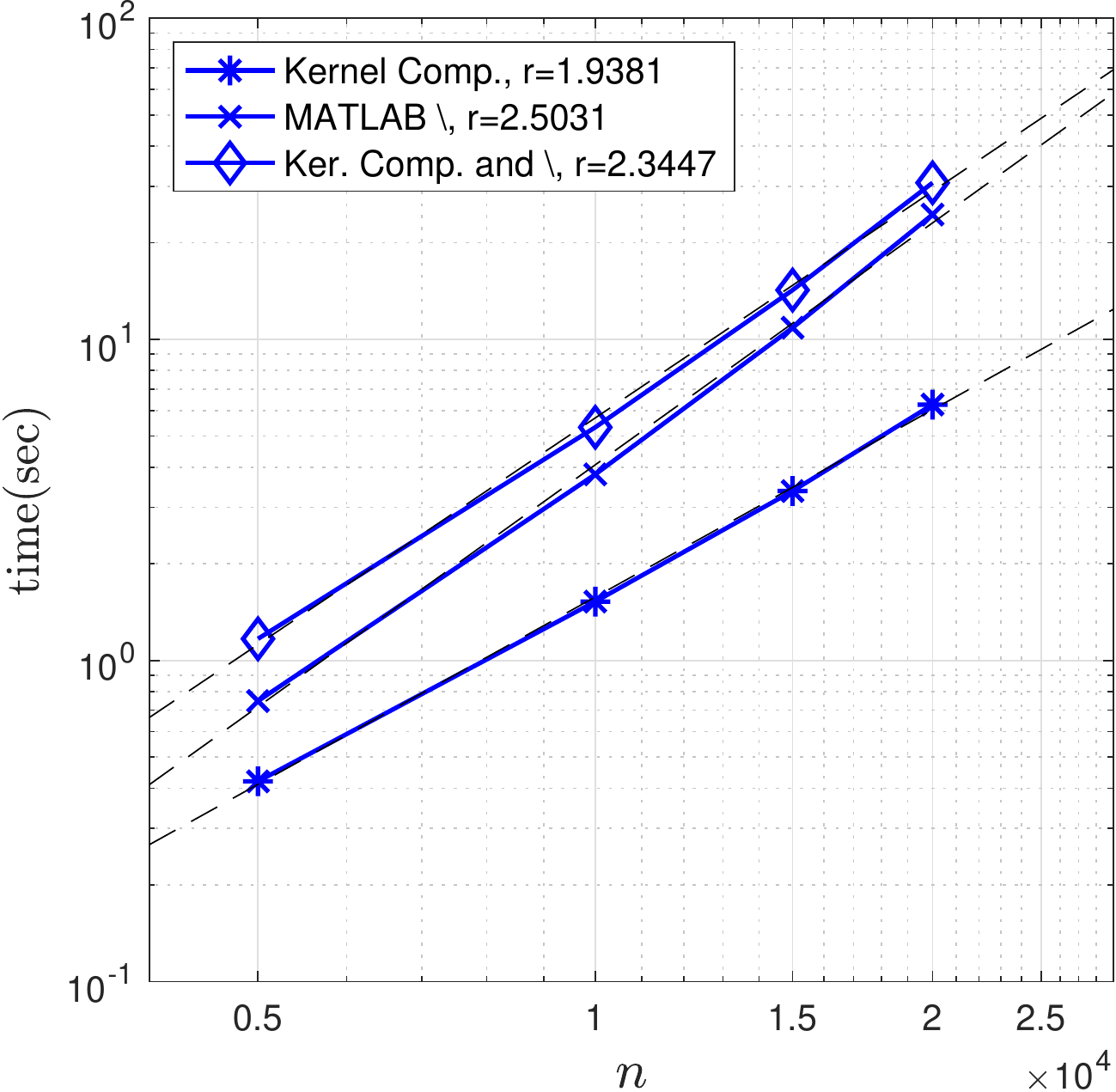}
\caption{\small{Scalability of HMAT solver with respect to size $n$ (left) and rank parameter $k$ (middle); scalability of $\backslash$ and full computation of kernel matrix (right).}}\label{scalability}
\end{figure}

\subsection{Regression on a double-phase linear elastic structure}\label{S5_2}
In the second numerical example, we consider regression on a parametric elliptic partial differential equation (PDE) where the output quantity of interest (QoI) is a spatially-averaged quantity. We particularly consider the average von Mises stress in a double-phase linear elastic structure. The details of the simulation for computing parametric average von Mises stress are provided in~\ref{numexII}. 

The target quantity is the spatial average of von Mises stress in 1) the full domain of the structure denoted by $q_1$ and 2) the region associated with the stiffer material denoted by $q_2$. We train two individual Gaussian processes with $10$-dimensional parametric variables $\bm \xi$ (which model elasticity field) as input (or feature) and $q_1$ and $q_2$ as target variables, i.e. $\bm \xi \stackrel{GP_1}\longrightarrow q_1$ and $\bm \xi \stackrel{GP_2}\longrightarrow q_2$.

To find the best GP model in this example, we perform a statistical boosting strategy. The total number of samples in each case of GP regression is $2000$ samples (in $10$ dimension, i.e. $\bm \xi \in [0,1]^{10}$ ). We consider $90 \%$ of the data points for training and $10\%$ for testing. We divide all data points into $10$ disjoint sets of $200$ samples for testing (in addition to $1800$ samples for training). Each individual case is associated with one model in the boosting study. To perform the hierarchical matrix computations, we consider $k=20$ and use the squared exponential kernel with the noise parameter $\sigma_n=10^{-3}$. 

The result of statistical boosting for optimal hyperparameters is briefly discussed in~\ref{numexII}, cf. Figure~\ref{optimal_ell}. The results of point-wise error for the best $\ell^{\ast}$ in two regression cases $q_1$ and $q_2$ are also shown in ~\ref{numexII}, cf. Figure~\ref{optimal_ell}.  The probability distribution functions (PDFs) of $q_1$ and $q_2$ for both test data and GP prediction are shown in Figure~\ref{vonmin_dist}. The agreement between the PDFs is apparent from this figure which is promising for general surrogate modeling purposes. The numerical values for the normalized error of prediction i.e. $e_{q_{i,\ast}}=\|\bm q_i - \bm q_{i,\ast}\|_2/\|\bm q_i\|_2,~i=1,2$ where $\bm q_i \in \mathbb{R}^{200 \times 1}$, and the normalized error in mean and standard deviation, i.e. $e_{\mu}$ and $e_{\sigma}$ are provided in Table~\ref{GP_vonmin}. 

\begin{figure}[h]
\centering
\includegraphics[width=1.8in]{./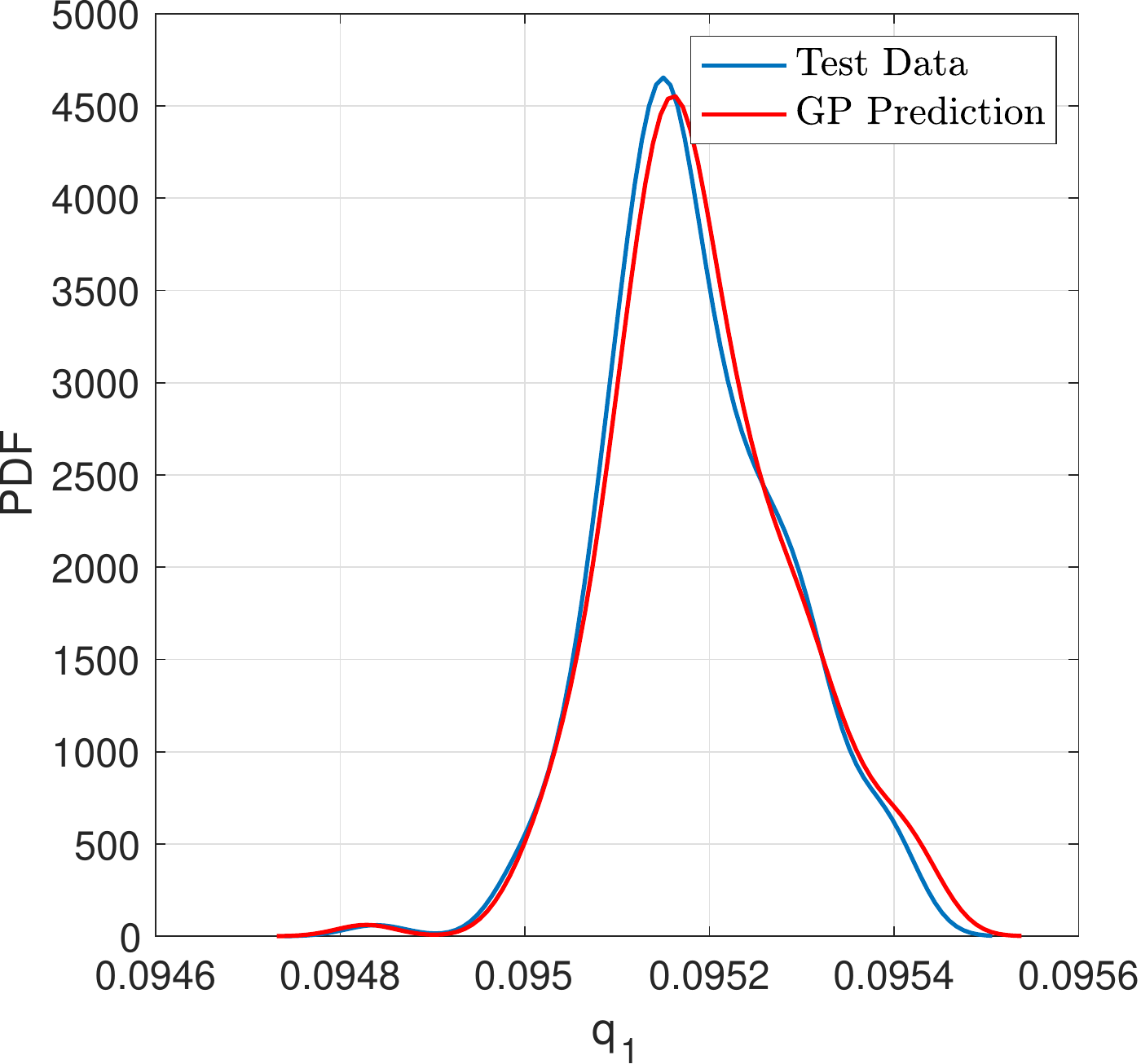}\includegraphics[width=1.75in]{./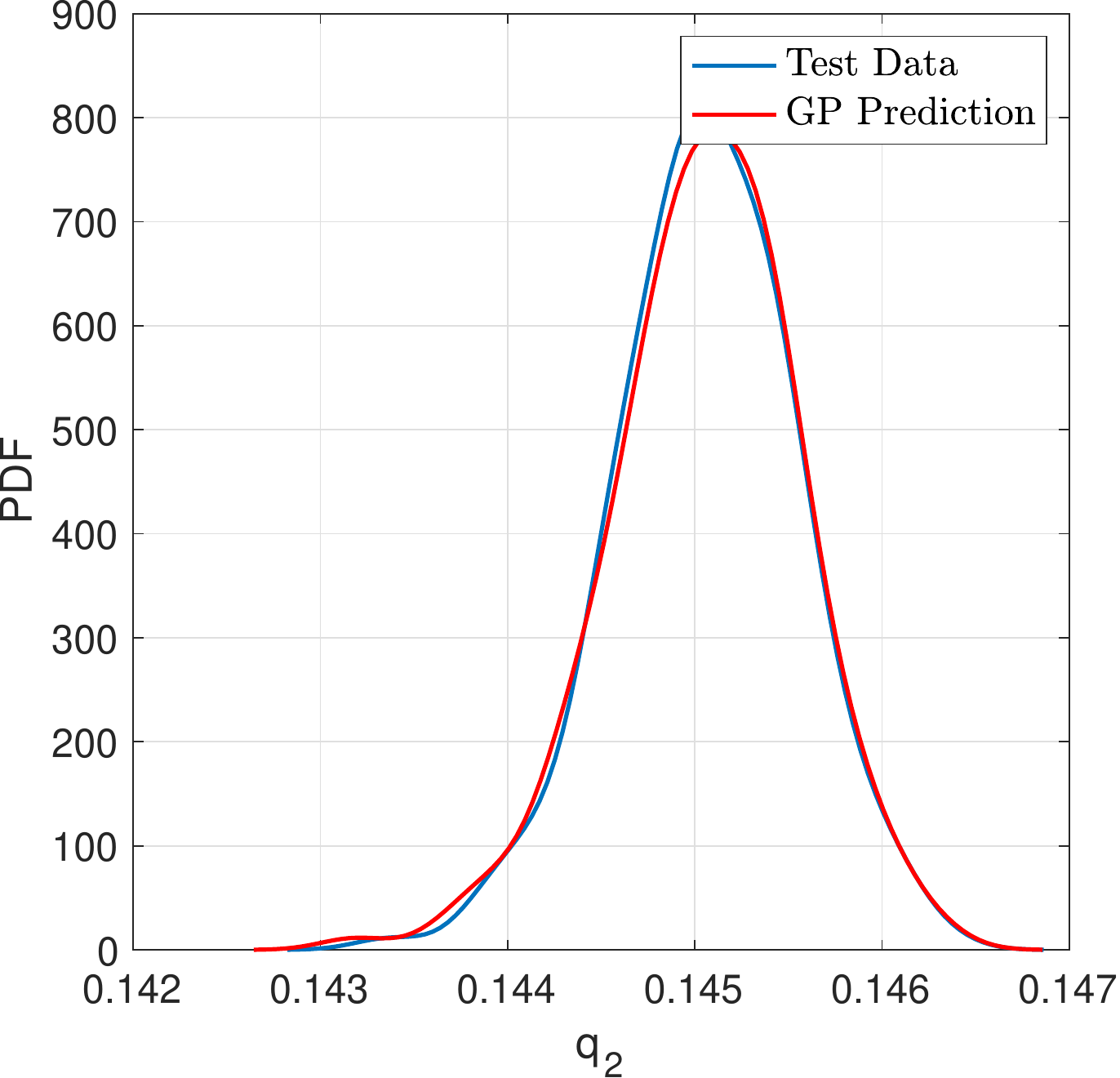}
\caption{\small{Probability distribution functions of test data and GP prediction for $q_1$ and $q_2$.}}\label{vonmin_dist}
\end{figure}

\begin{table}[!h]
\caption{Normalized error of prediction and normalized error in mean and standard deviation for two quantities $q_1$ and $q_2$.}
\normalsize
\centering
\begin{tabular}{l c c c c }
\hline\hline
  & $e_{q_{i,\ast}}$  & $e_{\mu}$  & $e_{\sigma} $     \\
\hline
Computation of $q_1$       &     $1.95 \times 10^{-4}$	   & $7.76 \times 10^{-5}$  &  $3.83 \times 10^{-2}$	\\
Computation of $q_2$       &     $6.41 \times 10^{-4}$   & $2.28 \times 10^{-5}$  &  $4.67 \times 10^{-2}$	\\
\hline
\end{tabular}
\label{GP_vonmin}
\end{table}

\subsection{Regression on NYC taxi trip record}\label{S5_3}

In this example, we study the performance of the approach on a relatively large-scale problem. Again, we perform the regressions with GP-HMAT on a single CPU. Our main objective is to show the applicability of the GP-HMAT approach in the regression of datasets that are not possible with the usual built-in capabilities of scientific computing platforms. 

In this example, we use the datasets available in~\cite{NYCTaxicab}. We use the yellow taxi trip records associated with December 2020. This dataset has $18$ features (including categorical and quantitative variables) with almost $1.4$ million data points. We consider four features, trip distance, payment type, fare amount, and tip amount, as the input to our regression model and consider the total amount as the target variable.  To use the dataset, we perform data cleansing by removing corrupt data points i.e. those including NaN (not a number) and negative feature/target values. We then extract $n=10^5$ data points randomly for performing the regression analysis. Out of these $n=10^5$ points, we randomly choose $n_{\circ}=9 \times 10^4$ points for training and $n_{\ast}=10^4$ for testing. The data points and GP implementation for this example are available in~\cite{Keshavarzzadeh_MATLAB_GPHMAT}. We normalize each dimension of the data with its max value and denote normalized input and output with $\{x_i\}_{i=1}^4$ and $y$, respectively.

Figure~\ref{dist_x_y} shows the PDF for the normalized feature variables as well as the normalized target variable. Note that the second feature is comprised of integer variables, i.e. $\text{payment type}=\{1,2,3,4\}$ which is normalized to $x_3=\{0.25, 0.5, 0.75, 1\}$ and therefore it is represented via the probability mass function (PMF). 

\begin{figure}[h]
\centering
\includegraphics[width=1.55in]{./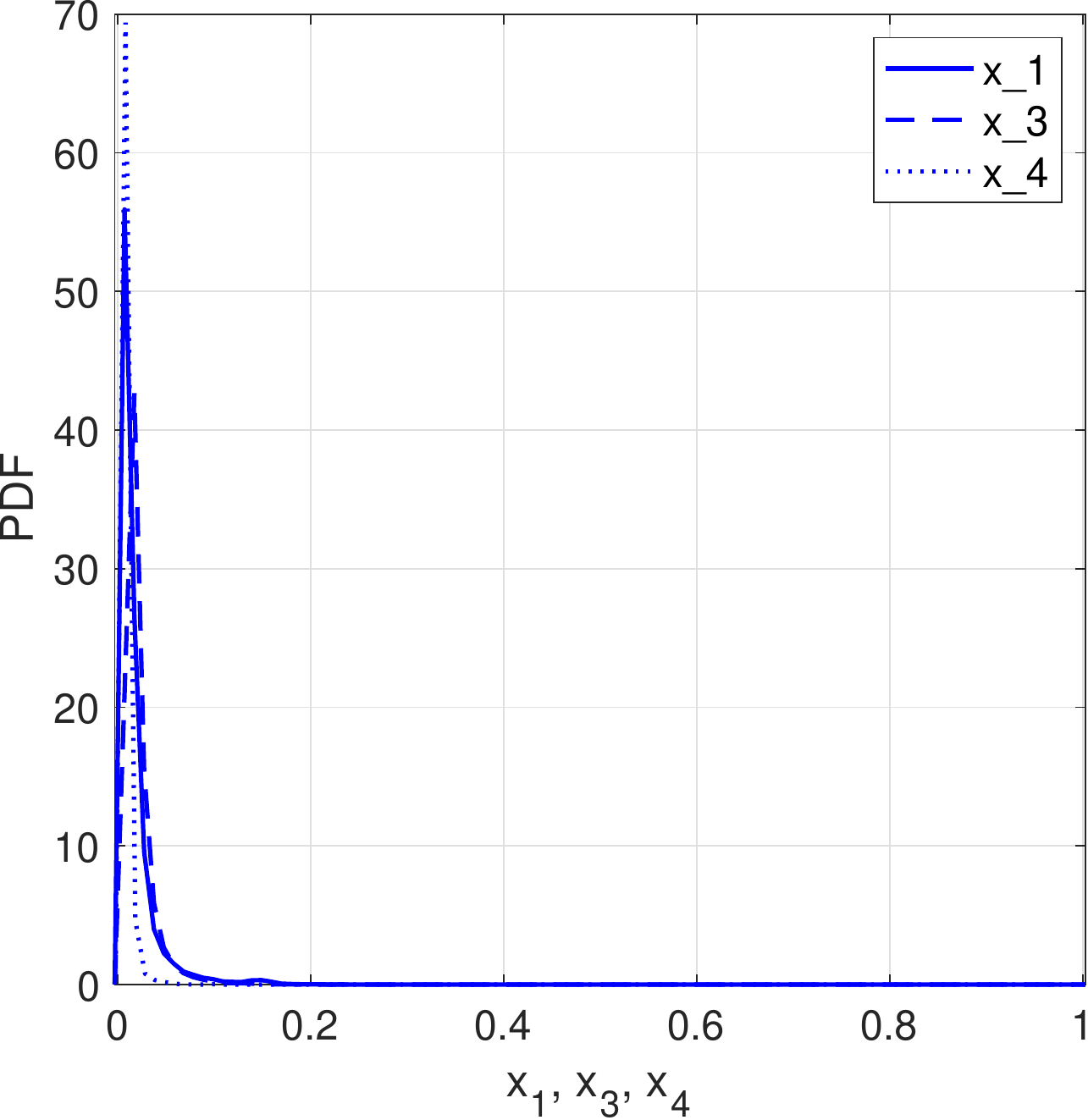}\hspace{0.1cm}\includegraphics[width=1.55in]{./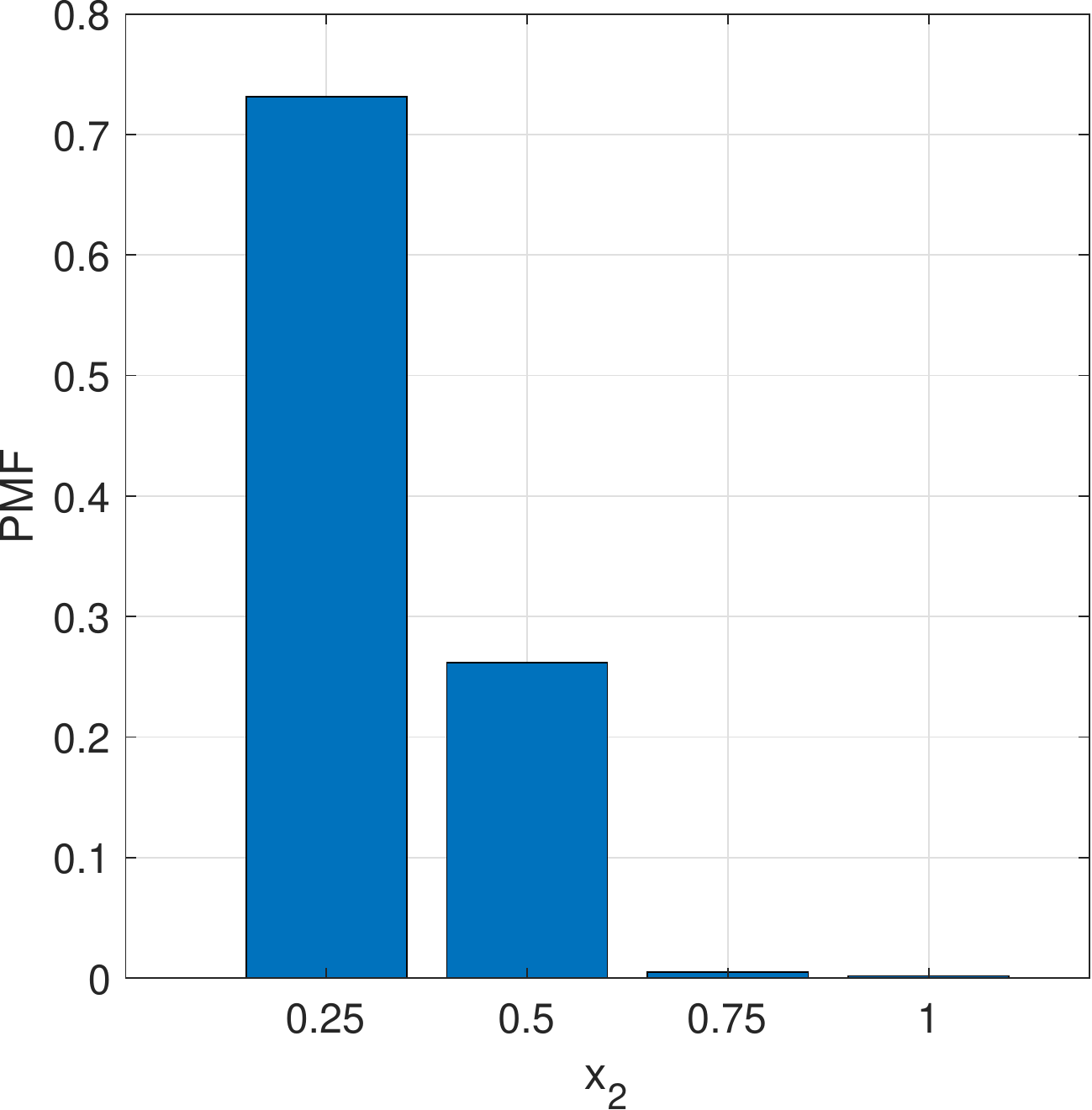}
\includegraphics[width=1.55in]{./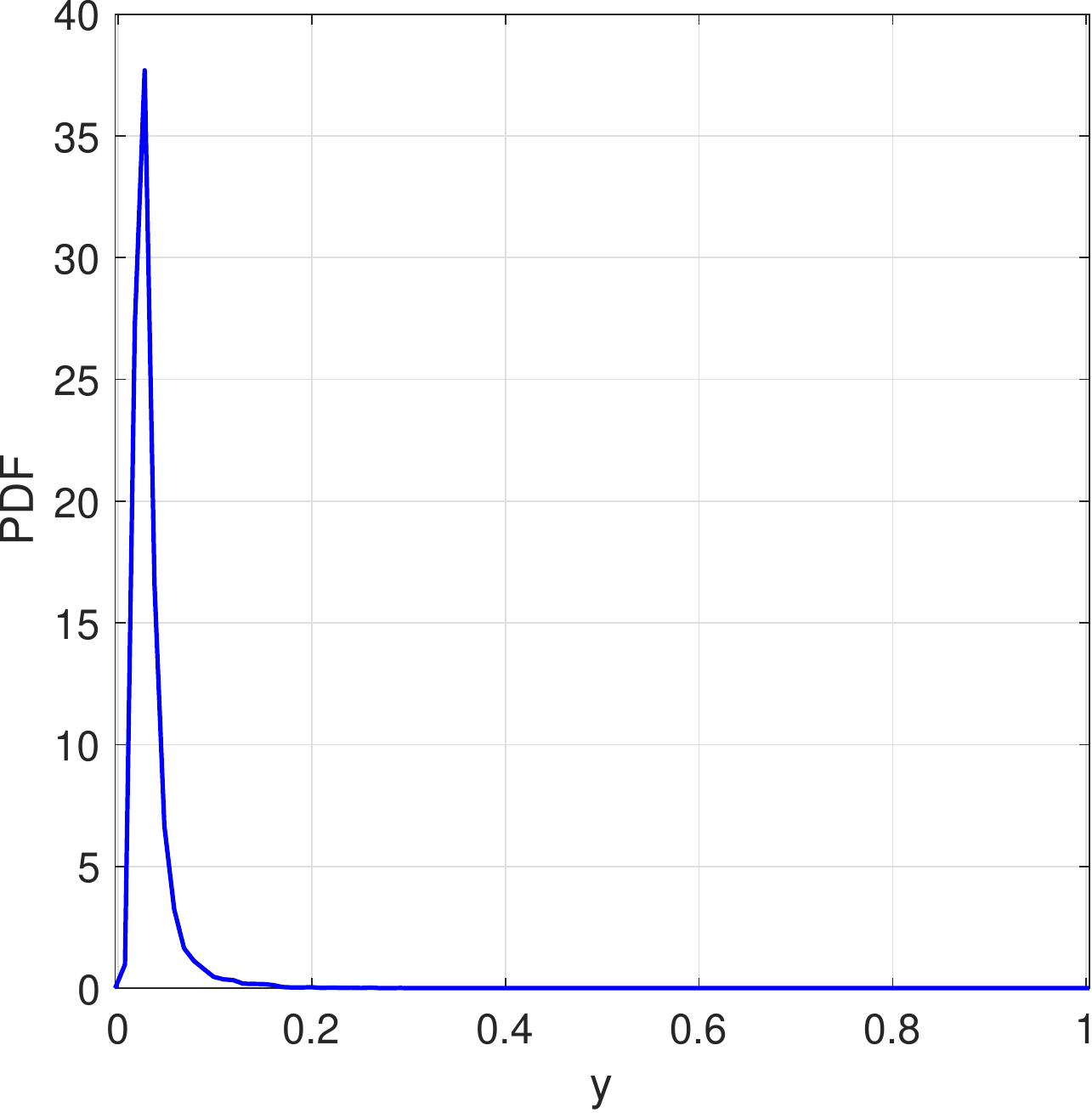}
\caption{\small{Probability distribution functions of trip distance, fare amount, and tip amount (left), probability mass function for payment type (middle) and probability distribution function for the target variable, total amount (right).}}\label{dist_x_y}
\end{figure}

For training, we set $k=30$ and $\eta=1050$. It is beneficial to discuss the timing of the optimization for this particular example. The total time for training (using tic-toc) is $t=4057.26 ~\text{seconds}$, i.e. about 67 minutes. In the first numerical example, we reported that the HMAT solver for $n=10^5$ and $k=30$ takes $8.51$ seconds. The number of iterations and number of function evaluations is $14$ and $70$ respectively for likelihood optimization. In addition, each GP-HMAT evaluation time (including linear solve and sensitivity calculations for every dimension) for four-dimensional data is roughly $5=4+1$ times the evaluation time for one linear solve. Using the above empirical estimates, the time for one GP-HMAT calculation is $\hat{t}=4057.26/70=57.96$ seconds (assuming there is no overhead time in optimization), which yields the ratio $57.96/8.51=6.81$ between the time of one GP-HMAT calculation and one linear solve, i.e. HMAT calculation. We also individually run the GP-HMAT code with the current setting and find that an individual run takes $t=60.9240$ seconds, which is slightly higher than our estimate (our estimate is effectively obtained from averaging over many runs). Computing the derivative of log determinant from $\bm A^{-1} (\partial \bm A/\partial \ell)$ by factorizing $\partial \bm A/\partial \ell$ (as a monolithic matrix not a hierarchical matrix) and applying HMAT solve to a matrix right-hand side (instead of vector) can be less accurate and potentially more costly. 

After finding the optimal $\ell^{\ast}=[0.9907,0.9705,0.9732,0.8882]$, to study the quality of the prediction with respect to rank parameter, we find the GP mean with different parameters $k$, namely $k=5, 10, 30, 50, 70$. The result of the mean of log point-wise error is shown in Figure~\ref{taxicab_pdf} (left). The PDFs of the test data and GP estimates are shown in Figure~\ref{taxicab_pdf} (right). We provide the numerical values associated with the mean of log point-wise error, i.e. $\mathbb{E}(\log_{10}(|\bm y - \bm y_{\ast}|))$ and normalized error in mean and standard deviation, i.e. $e_\mu$ and $e_{\sigma}$ Table ~\ref{GP_taxicab}. From these results, as expected, the regression with a higher rank parameter, in general, yields more accurate estimates, but the rank $k=70$ estimate is not better than $k=50$ prediction in all measured norms. From visual inspection of the right pane in Figure~\ref{taxicab_pdf}, almost all predictions provide a sufficiently accurate estimate for the probabilistic content of the target variable $y$. Indeed, the test data line is visible only on a large zoom scale, which is due to the superimposition of several similar lines in the plot.

\begin{figure}[h]
\centering
\includegraphics[width=1.8in]{./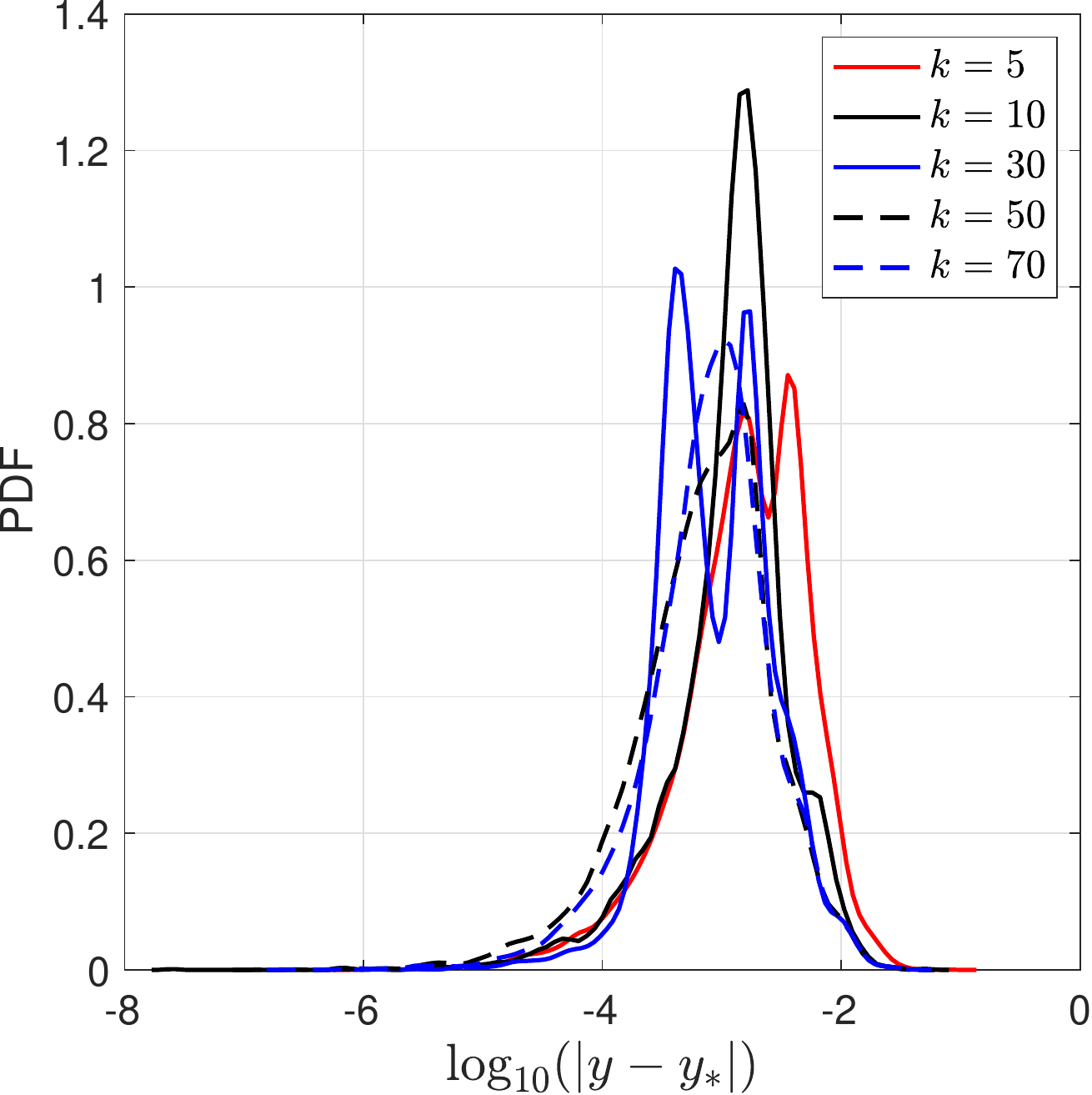}\includegraphics[width=1.8in]{./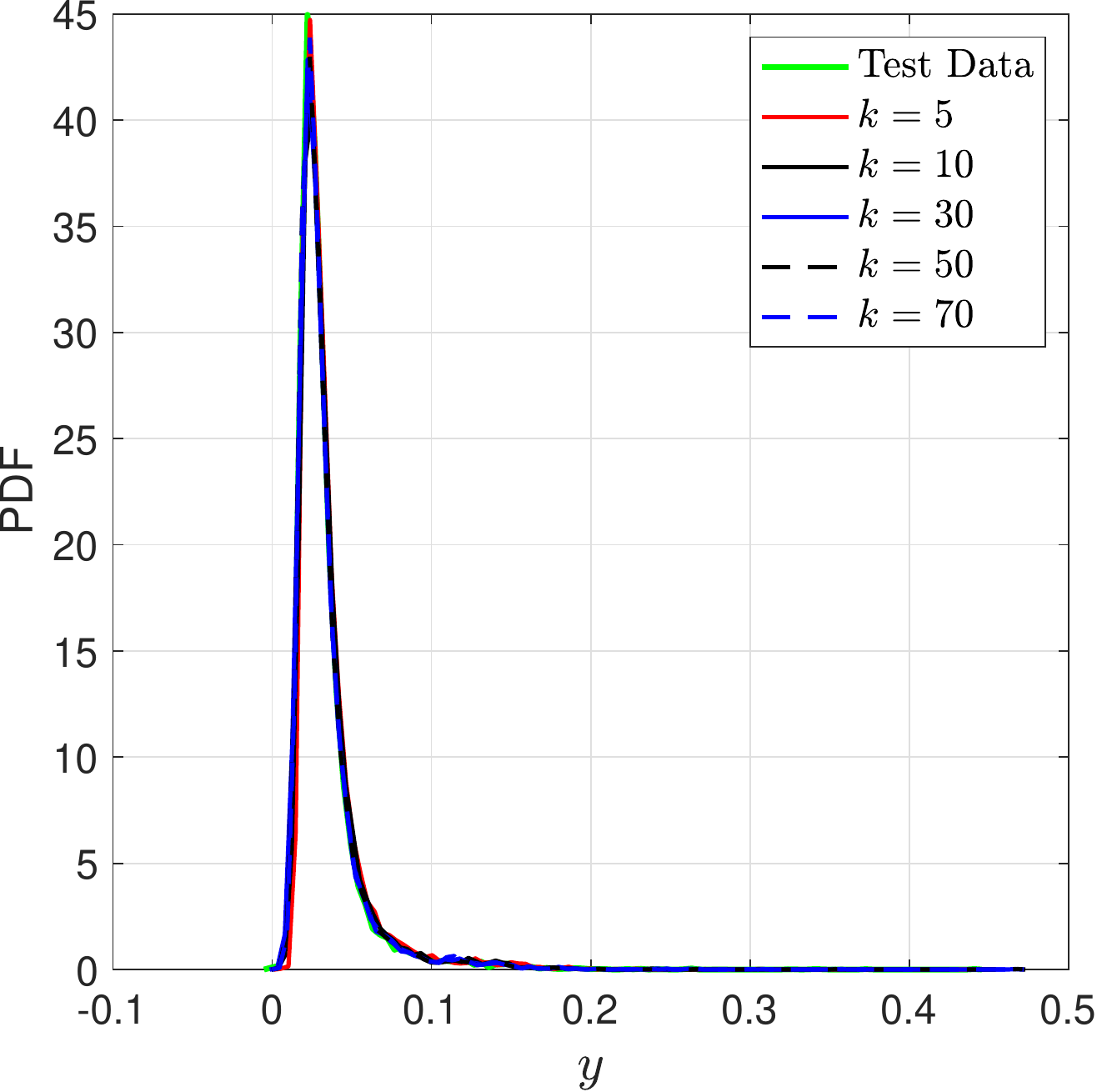}
\caption{\small{Log of point-wise error in GP prediction $\log_{10}(|y-y_{\ast}|)$ (left) and probability distribution functions of test data and GP predictions for target $y$ (right).}}\label{taxicab_pdf}
\end{figure}

\begin{table}[!h]
\caption{The mean of log point-wise error and normalized error in mean and standard deviation for various rank parameters used in GP regression.}
\normalsize
\centering
\begin{tabular}{l c c c c }
\hline\hline
  & $\mu_{\log(|y-y_{\ast}|)}$  & $e_{\mu}$  & $e_{\sigma} $     \\
\hline
Regr. ($k=5$)       &       $-2.81$	   & $6.72 \times 10^{-2}$  &  $4.00 \times 10^{-2}$	\\
Regr. ($k=10$)       &     $-2.92$           & $3.77 \times 10^{-2}$  &  $8.00 \times 10^{-3}$	\\
Regr. ($k=30$)       &     $-3.05$	   & $9.51 \times 10^{-3}$  &  $9.60 \times 10^{-4}$	\\
Regr. ($k=50$)       &     $-3.15$           & $5.81 \times 10^{-3}$  &  $4.52 \times 10^{-3}$	\\
Regr. ($k=70$)       &     $-3.11$           & $4.06 \times 10^{-3}$  &  $8.90 \times 10^{-3}$ \\
\hline
\end{tabular}
\label{GP_taxicab}
\end{table}

\section{Concluding remarks}\label{Sec6}
We develop a scalable tool for GP training and regression. Our approach revolves around hierarchical low-rank matrices that are exploited for efficient GP training. We provide a practical implementation for hierarchical matrix decomposition that entails an aggregation procedure to promote the low-rank structure of the off-diagonal blocks.  The off-diagonal blocks of the hierarchical matrix are factorized with a randomized SVD approach with certified accuracy and scalability. We develop hierarchical error and cost estimates and verify them via empirical studies. Our approach can be viewed as another practical approach for sparse GP regression that scales in the form of $\mathcal{O}(n \log (n))$. The following remarks discuss three research directions as possible extensions to the current GP-HMAT approach: 1) HSS matrices: The factorization for off-diagonal blocks in our work has been done separately for each off-diagonal block. In a subset class of hierarchical matrices, hierarchical semiseparable (HSS) matrices~\cite{Chandrasekaran05,Xia10_main, Martinsson2011}, the factors representing the off-diagonal blocks admit certain recursive relations that make the hierarchical representation inexpensive to
store and to apply. We plan to investigate the potential of these fast matrix constructions in combination with high-performance computing within the context of scalable GP approaches in our future research. 2) Application to finite element analysis (FEA) and topology optimization: The application of similar numerical methods/solvers e.g. multigrid solvers~\cite{Amir2014} has been considered in the context of linear elasticity and topology optimization. In the same vein, we plan to develop a systematic framework for optimization of parameterized hierarchical matrices in the context of topology optimization (and possibly nonlinear elasticity), which could significantly accelerate the design optimization process. 3) Adaptation of the HMAT solver for quadrature optimization:  In a similar context to evaluating a kernel matrix, we evaluate the Jacobian matrix of a nonlinear system from the polynomial recurrence rules for quadrature optimization~\cite{Keshavarzzadeh_DQ, mldq_vkz} (referred to as designed quadrature). As another research direction, we aim to adapt the computations in the HMAT solver to the designed quadrature framework for accelerated computation of quadrature rules in multiple dimensions.   

\section*{Appendix}
\renewcommand{\thesubsection}{\Alph{subsection}}
\setcounter{section}{6}
 \renewcommand{\thesection}{\arabic{section}}

\section{Remarks}~\label{App_remarks}
This section of the supplementary materials discusses various remarks from different parts of the paper:
\begin{remark_new}\label{rem:kernelspecific}
\textbf{Definition of kernels:} Given the following distance function:\\ $f(\bm x_i, \bm x_j, \ell)= (\bm x_i - \bm x_j)^T \bm M (\bm x_i - \bm x_j),  \quad \bm M =\ell^{-2} \bm{I} $
where $\bm{I}$ is the identity matrix, squared exponential and exponential kernels and their derivatives are given by 
\begin{equation}
	\begin{array}{l l }
	\text{squared exponential:} &k(\bm x_i, \bm x_j, \ell)=\exp(-\displaystyle \frac{1}{2}f), \qquad \frac{\partial k}{\partial \ell} = -\frac{1}{2} k \frac{\partial f}{\partial \ell}   \\
	\\
	\text{exponential kernel:} &\displaystyle k(\bm x_i, \bm x_j, \ell)=\exp(-\sqrt{f}), \qquad \frac{\partial k}{\partial \ell} = -\frac{1}{2\sqrt{f}} k \frac{\partial f}{\partial \ell}. 
	\end{array}
\end{equation}	
We have also included the implementation of the automatic relevance determination (ARD)~\cite{Neal96_SM} kernel within the code, which involves multiple hyperparameters, i.e. for ARD kernel the normalization matrix and its derivative are $\bm M =\text{diag}(\bm \ell)^{-2},~\partial \bm M/\partial \bm \ell =-2\text{diag}(\bm \ell)^{-3}$ where $\bm \ell$ is a vector of hyperparameters with a size equivalent to the dimension of data points. 

\end{remark_new}

\begin{remark_new}\label{rem:graph_partition}
\textbf{Section~\ref{Sec2}:} 
We are mainly interested in finding aggregates such that the resulting off-diagonal blocks exhibit a low-rank structure. This depends on the analyticity of the kernel as well as the location of GP nodes. We have attempted to tackle this problem by pursuing some form of nuclear norm optimization~\cite{brecht10} to reduce the rank of off-diagonal blocks; however, we have found that this problem is inherently difficult, and rank reduction may not be even possible for every generic kernel (e.g. the kernel with $l_1$ distance function in the first numerical example of Section~\ref{Sec5}). On the other hand, our aggregation strategy produces a sufficiently satisfactory result given its simplicity and implementation efficiency.   
\end{remark_new}

\begin{remark_new}\label{rem:orig_Hmatrix}
\textbf{Section~\ref{Sec2}:} In the original $\mathcal{H}$-matrix development, a concept of admissible and inadmissible blocks are introduced which helps specifying the diagonal and off-diagonal blocks. The admissible blocks involve full (or exact) representation of the kernel and the inadmissible blocks are associated with the regions of the matrix where low-rank approximation is used. In those original developments, analytical kernels with specified geometric domains are considered which readily determine the regions of admissible and inadmissible blocks. In this work, we do not assume any predefined geometric domain for the kernel; we endow the hierarchical structure in a user-defined fashion, and resort to a simple aggregation strategy for permutation of degrees of freedom to promote the low-rank structure of the off-diagonal blocks. Another key consideration in our construction which involves matrix inversion without factorizing a large matrix (or matrix-matrix multiplications), is the ability to ``gather'' the decomposed pieces on which a \emph{recursive} procedure can be applied. In other words, we outline a hierarchical decomposition which is consistent with the assembling of the decomposed pieces into a workable, matrix-free, and recursive inversion algorithm.
\end{remark_new}

\begin{remark_new}\label{rem:recursive_index}
\textbf{Section~\ref{Sec2}:} The execution of \texttt{perm\_generator} cf. Algorithm~\ref{alg:perm_generator} on the set of all nodes, once, results in a permutation index set for the entire matrix. Alternatively, one can use only the \texttt{permute} Algorithm~\ref{alg:permute} within the main recursive procedure of e.g. the linear solve, in an on-the-fly manner. Such consideration requires assignment of degrees of freedom into proper places in the resulting vectors within a recursive procedure (i.e. the assignment is performed several times within the recursion). We have found with experience that performing permutation computation once at the beginning of the linear solve results in faster execution overall. In other words, within the linear solve algorithm, first the global permutation indices are found, then the list of nodes and vector $\bm y$ is permuted (permuting list of nodes effectively permutes degrees of freedom in the matrix $\bm A$; however, no large matrix is directly factorized), and finally, when the solution $\bm x$ is obtained, it is reverted back to the original indices. 
\end{remark_new}

\begin{remark_new}\label{rem:oversampling}
\textbf{Section~\ref{Sec3}:} The oversampling parameter is introduced such that for a given target rank $k$, the matrix $\bm Q$ yields sufficiently small error in~\eqref{randomized_app} with high probability. The significance of $p$ is mainly realized in theoretical analysis and further numerical investigation of analytical estimates. Our implementation works with a user-defined parameter $k$. Obviously, if a certain $k$ does not yield the satisfactory approximation, the user has the choice to increase $k$. This is mainly because the target rank of the matrices is not known \emph{a priori} in our setting (or often in practical situations), and an appropriate parameter $k$ can be specified only with testing and investigating the error on a smaller scale problem for which the true solution is available. We present such results in our numerical studies in Section~\ref{Sec5}.  
\end{remark_new}

\begin{remark_new}\label{rem:SRFT}
\textbf{Section~\ref{Sec3}:} In Section 4.6 of ~\cite{nhalko11}, the complexity issue of $\bm Y$  has been discussed and particular structured random matrices, namely \emph{subsampled random Fourier transform} (SRFT) is introduced to remedy the issue. The SRFT is the simpler version of notable/significant algorithms in~\cite{Rokhlin13212} for solving overdetermined linear systems and Section 3.3 and Lemma 3.12 of~\cite{WOOLFE08} for computing an accelerated Fourier transform. The actual implementation of these approaches; however, decreases the complexity of $\mathcal{O}(mnk)$ to only $\mathcal{O}(mn\log(k))$ by exploiting the fundamental savings afforded by Fast Fourier Transform (FFT) and the fact that the orthonormal matrix $\bm Q$ can also be computed from the Fourier transformed version of $\bm A$, i.e. when each row (or column) of $\bm A$ is Fourier transformed. 
\end{remark_new}

\begin{remark_new}\label{rem:impdetail_rsvdid}
\textbf{Section~\ref{Sec3}:} There is a subtle implementation detail in Algorithm~\ref{alg:rsvd_id} which we think is not immediately apparent from Algorithm 5.2 in~\cite{nhalko11}. In step 6 of Algorithm~\ref{alg:rsvd_id}, we compute the QR factorization of the transpose of $\bm A(\mathcal{I}_{ID},:)$. This is done to ensure that the $\bm W \in \mathbb{R}^{n_2 \times k}$ matrix is an orthonormal matrix. Given that $\bm W$ is orthonormal, $\bm \Gamma_r$ (in step 9) will be orthonormal since $\tilde{\bm \Gamma}_r$ is also orthonormal. Using $\bm A(\mathcal{I}_{ID},:)$ (instead of $\bm A(\mathcal{I}_{ID},:)^T$) in step 6 and modifying the next steps accordingly (i.e. $\bm Z = \bm X \bm W$ in step 7 and $\bm \Gamma_r = \bm R^T \tilde{\bm \Gamma}_r$ in step 9 where $\bm W \in \mathbb{R}^{k \times k}$ and $\bm R \in \mathbb{R}^{k \times n_2}$) will result in a non-orthonormal $\bm \Gamma_r$ since $\bm R$ will not be orthonormal. The orthonormality of $\bm \Gamma_l$ and $\bm \Gamma_r$ is important mainly because 1) singular vectors in SVD analysis are expected to be orthonormal (they can be used to invert the input matrix easily if needed), and more importantly, 2) the derivation of singular vector sensitivities, cf. Section~\ref{Sec4} and~\ref{App_svd_der}, is based on this orthonormality property. Nonorthonormal singular vectors will result in erroneous sensitivity analysis.   
\end{remark_new}

\begin{remark_new}\label{rem:svdofderivative}
\textbf{Section~\ref{Sec3}:} To compute the sensitivity of the SVD factorization (with respect to the kernel hyperaprameter), we need to compute the SVD of the derivative of the matrix which is done by slight modifications to Algorithm~\ref{alg:rsvd_id}. The modified algorithm is presented in Algorithm~\ref{alg:rsvd_id_d}. Note that in this pseudocode, the derivative is shown for a scalar hyperparameter case. Indeed, in our sensitivity derivations in Section~\ref{Sec4}, we always show the derivative for one hyperparameter for more clarity. Generalizing to larger number of hyperparameters is trivial by implementing these algorithms via for loop running on multiple hyperparameters which is done in our computer implementation.   
\end{remark_new}

\begin{remark_new}\label{rem:assignmentvar}
\textbf{Section~\ref{Sec4}:} In SMW computations, the variable $\bm x_{D_i}$ involves the solution associated with the diagonal blocks (hence using $._D$). Note that the second dimension is not always $1$ as the right-hand side is concatenated in the recursive procedure. The variable $\bm q_{l_i}$, is a quantity (denoted by $\bm q$) that is obtained from a linear solve with the singular vectors as the right-hand side. The subscript $l$ is used because these terms appear on the left side of the correction matrix $\bm C_{smw}$ in the SMW formula. Analogously, the variable $\bm q_{lr_i}$ is associated with $\bm V \bm A_D^{-1} \bm U$, which includes both $\bm U$ and $\bm V$ in both sides of $\bm A_D^{-1}$. The last term, $\bm q_{ry_i}$ is associated with $\bm V \bm A_D^{-1} \bm y$, which appears on the right side of the correction matrix and also includes $\bm y$. Finally, the variable $\bm s_{qry_i}$ is the solution (hence using $\bm s$) to the linear solve with the right-hand side $\bm q_{ry_i}$.
\end{remark_new}

\begin{remark_new}\label{rem:cell_structure}
\textbf{Section~\ref{Sec4}:} As seen in Algorithm~\ref{alg:smw_ing} (and also Algorithm~\ref{alg:smw_lkl_ing}), in the step that we call the solver function (or likelihood estimation function), we concatenate right-hand side $\bm y$ with the matrix of singular vectors $\bm \Gamma$. The concatenation is straightforward; however, one computational bottleneck is the changing size of the right-hand side $[\bm y~\bm \Gamma]$ as well as the size of the solution to this right-hand side xD\_ql throughout the recursion. The corresponding solutions to individual $\bm y$ and $\bm \Gamma$ are also sliced from xD\_ql, which itself can be memory intensive. To accommodate the recursive computation, there have to be memory allocations for both concatenation and further slicing from xD\_ql. This memory issue deteriorates the scalability of the code; this can be inspected with a useful tool, the MATLAB \emph{profile} function. We find that using cell structure (while less optimal in terms of basic matrix operations) can significantly improve timing as concatenation and slicing in cell structures are much faster (perhaps due to the smaller size of the matrices within a cell structure). We also emphasize that, the memory issue is not completely resolved even with employing cell structure as it appears in our empirical scalability results for large $k$ and $n$. Having considered the memory issue, we report the fastest implementation/best scalability we could obtain from the code in this paper. 

\end{remark_new}

\begin{remark_new}\label{rem:sens_comp}
\textbf{Section~\ref{Sec4}:} One alternative for computation of the derivative of energy term $\bm x^T ({\partial \bm A}/\partial \ell) \bm x$ is to reduce the matrix ${\partial\bm A}/\partial \ell$ and perform $(\partial\bm A/\partial \ell) \bm x$. Forward computation $(\partial \bm A/\partial \ell) \bm x$ can be done outside the recursive loop i.e. after finding $\bm x$ in an straightforward manner; we call this computation non-intrusive. The non-intrusive and  intrusive approaches in this paper (cf. Section~\ref{Sec4} where we compute derivatives and solutions $\bm x$ within the recursion) are equally efficient for finding the derivative of the energy term. However, the efficient computation of log determinant with a non-intrusive approach seems non-trivial as it requires computation of $\bm A^{-1}(\partial \bm A/\partial \ell)$ i.e. linear solve with a large number of right-hand side vectors. One naive approach is to perform fast SVD (e.g. cf. Algorithm~\ref{alg:rsvd_id}) on the large square matrix  $\bm \partial \bm A/\partial \ell$ and perform a much smaller number of linear solves however that can significantly impact the accuracy. On the other hand, performing the derivative of SVD (as it is required for intrusive computation) and recursive implementation of $(\partial\bm A/\partial \ell) \bm x$ yield both derivatives of the energy term and the log determinant in a monolithic and efficient way as shown in Section~\ref{Sec4}.
\end{remark_new}

\section{Maximal independent set}~\label{App_MIS} In this section, we provide the details of a generic node aggregation strategy that is often used in algebraic multigrid~\cite{Vanek96,Bell12,Treister15}. The aggregation procedure is based on the \emph{maximal independent set} (MIS) algorithm~\cite{Bell12} which we present in detail in Algorithm~\ref{alg:MIS1}. The main ingredient of the procedure is the strength matrix $\bm S$ defined by \begin{equation}
\begin{array}{l l}
 & \bm S_{ij} = \begin{cases}
1 & \text{if} ~~|\bm A_{ij}| \geq \theta \sqrt{|\bm A_{ii} \bm A_{jj} |}  \\
0 & \text{othewise}.
\end{cases}
\end{array}
\end{equation}
where $|\bm A_{ij}|$ denotes the manitude of $\bm A_{ij}$ entry and $\theta$ is a tunable aggregation parameter. In kernel matrices, usually $\bm A_{ii}=1, \forall i$; therefore the above inequality simplifies to $\bm A_{ij} \geq \theta$ (See Step 7 in Algorithm~\ref{alg:MIS1}).

\begin{algorithm}
  \caption{Distance-$1$ maximal independent set: MIS$1$($\bm A, \theta$)} 
 \flushleft \textbf{Input:}  Sparse matrix $\bm A \in \mathbb{R}^{n \times n}$, Aggregation parameter ~$\theta$ \\
 \flushleft \textbf{Output:}  Set of root nodes (or MIS nodes) $\bm R~\&$ Set of permutation index sets $\bm P$\\
 \begin{algorithmic}[1]
 \State $\mathcal{I} = \{1,\ldots,n\}$ \Comment{Initial candidate index set}
 \State ${\bm R} \gets 0 \times \mathcal{I}$, $c=1$ \Comment{Initially $\bm R$ is set of zeros with length $n$ }
 \For {$i \in \mathcal{I}$}
 	 \If {$ \bm R_i=0$}  
  		\State $\bm R_i=1,~\bm P_c=\{\}$ \Comment{The unaggregated node $i$ is assigned as a root}
		\State $\bm P_c \gets \bm P_c \cup \{i\}$ \Comment{$i$ added to $\bm P_c$}
		\For {$\{j~ |~  \bm A_{ij} \geq \theta \}$}
			\State $\bm R_j = -1, ~ \bm P_c \gets \bm P_c \cup \{j\}$ \Comment{Node $j$ is not a root $\bm R_j=-1$; $j$ added to $\bm P_c$}	
		\hspace{-0.25cm}	\color{gray}
       \For {$\{k~ |~  \bm A_{jk} \geq \theta \}$} \Comment{MIS$2$ entails an added for-loop on neighbors indices $j$ }
						\State $\bm R_k = -1, ~ \bm P_c \gets \bm P_c \cup \{k\}$
			\EndFor \color{black}		
		\EndFor
		\State $c \gets c+1$
	\EndIf
\EndFor
\State $\bm R = \{i ~ |~  \bm R_i =1 \}$
\State $\bm P = \{ \bm P_i~ |~ i \in (1,2,\ldots,c-1) \}$ 
 \end{algorithmic}
\label{alg:MIS1}
\end{algorithm}

\section{Interpolative decomposition}\label{sec:ID}

The interpolative decomposition (ID) is an effective approach for the compression of rank deficient matrices, which was initially introduced in~\cite{GuMing96} and later elucidated via a computational procedure in~\cite{Cheng05}.  ID identifies a collection of $k$ columns or rows from a matrix $\bm A \in \mathbb{R}^{m \times n}$ with rank $k$ that cover the span of the range of $\bm A$. In particular, from the ID procedure the index set  $\mathcal{I}_{ID} = \{i_1,\ldots,i_k\}$ is computed, which yields $\bm A \simeq \bm X \bm A(\mathcal{I}_{ID},:)$ where $\bm X \in \mathbb{R}^{m \times k}$ is the ID matrix with the condition that $\bm X (\mathcal{I}_{ID},:) = \bm{I}_k \in \mathbb{R}^{k \times k}$ is an identity matrix. 
To find the important rows, a QR factorization is performed on the transpose of the matrix i.e. $[\bm Q_A,\bm R_A,\bm P_{mat}] \gets qr(\bm A^T, \text{`matrix'})$ where $\bm A^T \bm P_{mat} \simeq \bm Q_A \bm R_A$
and $\bm P_{mat} \in \mathbb{R}^{m \times m}$ is a permutation matrix.  Equivalently, one can use  $[\bm Q_A,\bm R_A,\bm P_{vec}] \gets qr(\bm A^T, \text{`vector'})$ to find the permutation indices stored in the vector $\bm P_{vec}$. Subsequently, the ID indices are found from the first $k$ indices in $\bm P_{vec}$, i.e. $\mathcal{I}_{ID} = \bm P_{vec}(1:k)$. 
To find the ID matrix, first the upper-triangular matrix $\bm R \in \mathbb{R}^{k \times m}$ is decomposed into two matrices, $\bm R_A =[\bm R_{A_{1}} ~ \bm R_{A_{2}}]$ where $\bm R_{A_{1}}\in \mathbb{R}^{k \times k}$ and $\bm R_{A_{2}}\in \mathbb{R}^{k \times (m-k)}$. The above equation can be equivalently written as $\bm R_A = \bm R_{A_{1}} [\bm{I} ~ \bm T]$  where $\bm T = (\bm R_{A_1} )^{-1} \bm R_{A_2} \in \mathbb{R}^{k \times (m-k)}$ is one building block of the ID matrix. Using the relation $\bm Q \bm R_{A_1} [\bm{I}~\bm T] \bm P^T_{mat}\simeq\bm A^T$ where $\bm{I} \in \mathbb{R}^{k \times k}$, it then follows that 
\begin{equation}
\begin{array}{l}
\bm A^T \simeq \bm Q \bm R_{A_1} [\bm{I} ~ \bm T] \bm P^T_{mat} = \bm A^T(:, \mathcal{I}_{ID}) [\bm{I} ~\bm T] \bm P^T_{mat} \\
\\
\bm A^T \simeq \bm A^T(:, \mathcal{I}_{ID}) \bm X^T,~\bm A \simeq \bm X \bm A(\mathcal{I}_{ID},:)
\end{array}
\end{equation}
where $\bm X = ([\bm{I} ~\bm T] \bm P^T_{mat})^T$. The ID matrix $\bm X$ can also be obtained by using the information in the permutation vector by $\bm X = \left([\bm{I} ~\bm T](:,\tilde{\bm P}_{vec})\right)^T$ where $\tilde{\bm P}_{vec} = \text{sort}(\bm P_{vec},\text{`ascend'})$ yields the permutations indices associated with the transpose of the permutation matrix. It should be noted that the computations with permutation vector are appreciably faster than computations with the permutation matrix. 

Apparently applying ID procedure to the full matrix $\bm A \in \mathbb{R}^{m \times n}$ is significantly costly. To arrive at $\bm A \simeq \bm X \bm A(\mathcal{I}_{ID},:)$ with the $\bm X$ matrix obtained from $\bm Q$, consider $\bm Q = \bm X \bm Q(\mathcal{I}_{ID},:)$. It follows that
\begin{equation}
\bm A \simeq \bm Q \bm Q^T \bm A = \bm X \bm Q(\mathcal{I}_{ID},:) \bm Q^T \bm A.
\end{equation}
Since $\bm X(\mathcal{I}_{ID},:)= \bm{I}_k$, the above equation implies that  $\bm A(\mathcal{I}_{ID},:) \simeq \bm Q(\mathcal{I}_{ID},:) \bm Q^T \bm A$. Therefore, as a proxy to $\bm A$ we compute $\bm X \bm A(\mathcal{I}_{ID},:)$ where $\bm X$ is computed from $\bm Q$.

\section{Pseudocodes}~\label{App_algs} 
In this section, we provide pseudocodes associated with the hierarchical decomposition in Section~\ref{S2_2}, the linear solve and likelihood evaluation algorithms denoted by \texttt{back\_solve} and \texttt{lkl\_eval}, as well as their associated pseudocodes, discussed in Section~\ref{Sec4}. 

\begin{algorithm}
 \caption{Node Permutation Based on Size: $[\mathcal{I}_1,\mathcal{I}_2] \gets $\texttt{permute}($\bm N$, $\mathcal{I},~s$)} 
 \flushleft \textbf{Input:}  A Set of nodes $\bm N \in \mathbb{R}^{d \times n}$ with index set  $\mathcal{I} = \{1,\ldots,n\}$, aggregation size ~$s$ \\
 \flushleft \textbf{Output:}  Two sets of indices $\mathcal{I}_1$ and $\mathcal{I}_2$ with sizes $s$ and $n-s$\\
 \begin{algorithmic}[1]
 \State Compute the set $\bm A_1 = \{A_{11},\ldots,A_{1n}\}$ (with respect to the first node in $\bm N(:,\mathcal{I})$) 
 \State $[\tilde{\bm A}_1, \bm P] \gets \text{sort}(\bm A_1,\text{`descend'})$ 
\State $\mathcal{I}_1 \gets \{P_i\}_{i=1}^{s}$ and $\mathcal{I}_2 \gets \{P_i\}_{i=s+1}^{n}$ 
\end{algorithmic}
\label{alg:permute}
\end{algorithm}

\begin{algorithm}
 \caption{ Recursive Permutation Generator: $\tilde{\mathcal{I}} \gets $\texttt{perm\_generator}$(\bm N, \mathcal{I}$,~$\eta$)} 
 \flushleft \textbf{Input:}  A Set of nodes $\bm N \in \mathbb{R}^{d \times n}$ with index set  $\mathcal{I} = \{1,\ldots,n\}$,~$\eta$ \\
 \flushleft \textbf{Output:}  A set of permuted nodes $\tilde{\mathcal{I}}$\\
 \begin{algorithmic}[1]
 \State Compute $\nu$ based on $\# \mathcal{I}$ cf. Equation~\eqref{sl_size} 
 \State $[\mathcal{I}_1, \mathcal{I}_2] \gets$ \texttt{permute}($\bm N, \mathcal{I}$,~ $\nu$)  \Comment{See Algorithm~\ref{alg:permute}}
\State $\tilde{\mathcal{I}}_1 \gets$ \texttt{perm\_cutoff}($\bm N, \mathcal{I}_1$,~$\eta$)  \Comment{See Algorithm~\ref{alg:perm_cutoff}}
\State $\tilde{\mathcal{I}}_2 \gets$ \texttt{perm\_cutoff}($\bm N, \mathcal{I}_2$,~$\eta$)  
\State $\tilde{\mathcal{I}} \gets \tilde{\mathcal{I}}_1 \cup \tilde{\mathcal{I}}_2$
\end{algorithmic}
\label{alg:perm_generator}
\end{algorithm}

\begin{algorithm}
  \caption{ Permutation Generation Based on $\eta$: $\tilde{\mathcal{I}} \gets$ \texttt{perm\_cutoff}($\bm N$, $\mathcal{I}$,~$\eta$)} 
 \flushleft \textbf{Input:}  A Set of nodes $\bm N \in \mathbb{R}^{d \times n}$ with index set  $\mathcal{I} = \{1,\ldots,n\}$,~$\eta$ \\
 \flushleft \textbf{Output:}  A set of permuted nodes $\tilde{\mathcal{I}}$\\
 \begin{algorithmic}[1]
 \If{$\# \mathcal{I} \leq \eta$ }
 \State{$\tilde{\mathcal{I}} \gets \mathcal{I}$}
 \Else
 \State{$\tilde{\mathcal{I}} \gets$~\texttt{perm\_generator}($\bm N, \mathcal{I}$,~$\eta$ ) } \Comment{See Algorithm~\ref{alg:perm_generator}; note that this is a recursive step.}
\EndIf 
\end{algorithmic}
\label{alg:perm_cutoff}
\end{algorithm}

\begin{algorithm}
\caption{Hierarchical Matrix Inversion: $\bm x \gets $\texttt{back\_solve}($\bm N, \bm y, [\eta,k]$)}
\flushleft \textbf{Input:} List of nodes $\bm N \in \mathbb{R}^{d \times n}$, right hand side $\bm y \in \mathbb{R}^{n \times d_y}$, $\eta$ and $k$
\flushleft \textbf{Output:}  Solution $\bm x$\\
\begin{algorithmic}[1]
\If{$d_y =1$}
    \State Find the global permutation index, $\bm P \gets \texttt{perm\_generator}(\{ i\}_{i=1}^{n}, 
    \eta)$
    \State Permute indices in $\bm N$ and $\bm y$, i.e. $\bm N \gets \bm N(:,\bm P),~\bm y \gets \bm y(\bm P,:)$
\EndIf
\State Compute $\nu$ and form two sets of indices $\mathcal{I}_1=\{i\}_{i=1}^{\nu}$ and $\mathcal{I}_2=\{i\}_{i=\nu+1}^{n}$
\State $[\bm \Gamma_l, \bm \Gamma_m, \bm \Gamma_r] \gets$ \texttt{rsvd\_id}($\bm N,[\mathcal{I}_1,\mathcal{I}_2],k$) \Comment{See Algorithm~\ref{alg:rsvd_id}}
\State $\bm x \gets \texttt{smw}(\bm N,[\mathcal{I}_1,\mathcal{I}_2],[\bm \Gamma_l, \bm \Gamma_m, \bm \Gamma_r], \bm y, [\eta, k])$ \Comment{See Algorithm~\ref{alg:smw}}
\If{$d_y =1 $}
    \State $\bm x_{new}(\bm P,:) \gets \bm x$
    \State $\bm x \gets \bm x_{new}$
\EndIf
\end{algorithmic}\label{HMI} 
\end{algorithm}

\begin{algorithm}
\caption{Main SMW computation for linear solve: \texttt{smw}($\bm N,[\mathcal{I}_1,\mathcal{I}_2],[\bm \Gamma_l, \bm \Gamma_m, \bm \Gamma_r], \bm y, [\eta, k]$)}
\flushleft \textbf{Input:} List of nodes $\bm N \in \mathbb{R}^{d \times n}$, right-hand side $\bm y \in \mathbb{R}^{n \times d_y}$, two sets of indices $\mathcal{I}_1$ and $\mathcal{I}_2$, the SVD factorization of the off-diagonal block associated with $\mathcal{I}_1$ and $\mathcal{I}_2$, $\eta$ and $k$\\
 \flushleft \textbf{Output:}  Solution $\bm x$\\
\begin{algorithmic}[1]
\State Form two lists of nodes $\bm N_1 \in \mathbb{R}^{d \times n_1}, \bm N_2 \in \mathbb{R}^{d \times n_2}$ and two right-hand side vectors (matrices) $\bm y_1 \in \mathbb{R}^{n_1 \times d_y},~\bm y_2 \in \mathbb{R}^{n_2 \times d_y}$
\State $[\bm x_{D_1}, \bm q_{l1}, \bm q_{lr1}, \bm q_{ry1}] \gets $ \texttt{smw\_ing}($\bm N_1, \mathcal{I}_1, \bm \Gamma_l, \bm y_1, [\eta, k]$)  \Comment{See Algorithm~\ref{alg:smw_ing}}
\State $[\bm x_{D_2}, \bm q_{l2}, \bm q_{lr2}, \bm q_{ry2}] \gets $ \texttt{smw\_ing}($\bm N_2, \mathcal{I}_2, \bm \Gamma_r, \bm y_2, [\eta, k]$)
\State Form the SMW correction matrix, $\bm C_{smw}$ $\gets \left [ \begin{array}{l l}
 \bm \Gamma^{-1}_m & \bm q_{lr2}\\
 \bm q_{lr1} & \bm \Gamma^{-T}_m 
 \end{array} \right ] $
\State Compute $\bm x$ using Equation~\ref{smw_eq} 
\end{algorithmic}\label{alg:smw}
\end{algorithm}

\begin{algorithm}
\caption{Ingredients of SMW computations: \texttt{smw\_ing}($\bm N, \mathcal{I}, \bm \Gamma, \bm y, [\eta, k]$)}\label{alg:smw_ing}
 \flushleft \textbf{Input:}  List of nodes $\bm N$, an index set associated with the list of nodes $\mathcal{I}$, singular vectors $\bm \Gamma$, right-hand side $\bm y$, $\eta$ and $k$ \\
 \flushleft \textbf{Output:}  $[\bm x_{D_i}, \bm q_{l_i}, \bm q_{lr_i}, \bm q_{ry_i}]$ \\
 \begin{algorithmic}[1]
 \If{$\# \mathcal{I}$ $\leq$ $\eta$ }
 \State Compute full $\bm A_{ii}$ using the nodes in $\bm N$ \Comment{For GP computations add the fixed regularization parameter $\sigma_n^2$ to the diagonal entries. See Section~\ref{Sec4_5}}
 \State Form the right-hand side by concatenating $\bm y$ and $\bm \Gamma$ i.e. $[\bm y~\bm \Gamma]$ 
 \State xD\_ql $\gets \bm A_{ii} \backslash [\bm y~\bm \Gamma]$
 \State $\bm x_D \gets$ xD\_ql($:,\{1,\ldots,d_y\}$),~ $\bm q_l \gets$ xD\_ql($:,\{d_y+1,\ldots,d_y+d_{\Gamma}\}$)
 \State $\bm q_{lr} \gets \bm \Gamma^T  \bm q_l$, $\bm q_{ry} \gets \bm \Gamma^T  \bm x_{D}$ 
 \Else
 \State Form the right-hand side by concatenating $\bm y$ and $\bm \Gamma$ i.e. $[\bm y~\bm \Gamma]$  
 \State xD\_ql$\gets$  \texttt{back\_solve}($\bm N, [\bm y~\bm \Gamma], [\eta, k]$ )  \Comment{This is a recursive step. See Algorithm~\ref{HMI}}
 \State $\bm x_D \gets$ xD\_ql($:,\{1,\ldots,d_y\}$),~ $\bm q_l \gets$ xD\_ql($:,\{d_y+1,\ldots,d_y+d_{\Gamma}\}$)
 \State $\bm q_{lr} \gets \bm \Gamma^T  \bm q_l$,~ $\bm q_{ry} \gets \bm \Gamma^T  \bm x_{D}$ 
\EndIf 
\end{algorithmic}
\end{algorithm}

\begin{algorithm}
\caption{Hierarchical likelihood evaluation: \texttt{lkl\_eval}($\bm N, \bm y, [\eta,k]$)}
\flushleft \textbf{Input:}  List of nodes $\bm N \in \mathbb{R}^{d \times n}$, right-hand side $\bm y \in \mathbb{R}^{n \times d_y}$, $\eta$ and $k$
 \flushleft \textbf{Output:}  $[\Pi,\partial \Pi/\partial \ell,\Lambda,\partial \Lambda/\partial \ell, \bm x, \bm A_d\bm x]$ \\
\begin{algorithmic}[1]
\If{$d_y =1 $}
    \State Find the global permutation index, $\bm P \gets \texttt{perm\_generator}(\{ i\}_{i=1}^{n}, \eta)$
    \State Permute indices in $\bm N$ and $\bm y$, i.e. $\bm N \gets \bm N(:,\bm P),~\bm y \gets \bm y(\bm P,:)$
\EndIf
\State Compute $\nu$ and form two sets of indices $\mathcal{I}_1=\{i\}_{i=1}^{\nu}$ and $\mathcal{I}_2=\{i\}_{i=\nu+1}^{n}$
\State $[\bm \Gamma_l, \bm \Gamma_m, \bm \Gamma_r],[\bm \Gamma_{l_d}, \bm \Gamma_{m_d}, \bm \Gamma_{r_d}],[\partial \bm \Gamma_l/\partial \ell,\partial \bm \Gamma_m/\partial \ell,\partial \bm \Gamma_r/\partial \ell] \gets$ \texttt{svd\_d}($\bm N,[\mathcal{I}_1,\mathcal{I}_2],k$) \Comment{See Algorithm~\ref{alg:svd_der}}
\State $[\Pi,\partial \Pi/\partial \ell,\Lambda,\partial \Lambda/\partial \ell,\bm x,\bm A_d\bm x] \gets \texttt{smw\_lkl}(\bm N,[\mathcal{I}_1,\mathcal{I}_2],[\bm \Gamma_l, \bm \Gamma_m, \bm \Gamma_r],[\bm \Gamma_{l_d}, \bm \Gamma_{m_d}, \bm \Gamma_{r_d}],[\partial \bm \Gamma_l/\partial \ell, \partial \bm \Gamma_m/\partial \ell, \partial \bm \Gamma_r/\partial \ell], \bm y, [\eta, k])$ \Comment{See Algorithm~\ref{alg:smw_lkl}}
\If{$d_y =1 $}
    \State $\bm x_{new}(\bm P,:) \gets \bm x,~\bm A_d\bm x_{new}(\bm P,:) \gets \bm A_d\bm x$
    \State $\bm x \gets \bm x_{new},~\bm A_d\bm x \gets \bm A_d\bm x_{new}$
\EndIf
\end{algorithmic} \label{alg:lkl_eval}
\end{algorithm}

\begin{algorithm}
\caption{Main SMW computations for likelihood and its derivative: \texttt{smw\_lkl}($\bm N,[\mathcal{I}_1,\mathcal{I}_2],[\bm \Gamma_l, \bm \Gamma_m, \bm \Gamma_r], [\bm \Gamma_{l_d}, \bm \Gamma_{m_d}, \bm \Gamma_{r_d}],[\partial \bm \Gamma_l/\partial \ell, \partial \bm \Gamma_m/\partial \ell, \partial \bm \Gamma_r/\partial \ell], \bm y, [\eta, k]$)}\label{alg:smw_lkl}
\flushleft \textbf{Input:}  List of nodes $\bm N \in \mathbb{R}^{d \times n}$, right-hand side $\bm y \in \mathbb{R}^{n \times d_y}$, two sets of indices $\mathcal{I}_1$ and $\mathcal{I}_2$, SVD, SVD of derivative and derivative of SVD of the off-diagonal block associated with $\mathcal{I}_1$ and $\mathcal{I}_2$, $\eta$ and $k$\\
\flushleft \textbf{Output:}  $[\Pi,\partial \Pi/\partial \ell,\Lambda,\partial \Lambda/\partial \ell,\bm x,\bm A_d\bm x]$\\
\begin{algorithmic}[1] 
\State Form two lists of nodes $\bm N_1 \in \mathbb{R}^{d \times n_1}, \bm N_2 \in \mathbb{R}^{d \times n_2}$ and two right-hand side vectors (matrices) $\bm y_1 \in \mathbb{R}^{n_1 \times d_y},~\bm y_2 \in \mathbb{R}^{n_2 \times d_y}$
\State $[\bm x_{D_1},\bm q_{l1}, \bm q_{lr1}, \bm q_{ry1},\Pi_1,\partial \Pi_1 /\partial \ell,\Lambda_1,\partial \Lambda_1 /\partial \ell,\bm A_d\bm x_{D_1},\bm A_d\bm q_{l_1}] \gets $ \texttt{smw\_lkl\_ing}($\bm N_1, \mathcal{I}_1, \bm \Gamma_l, \bm y_1, [\eta, k]$)
\State $[\bm x_{D_2}, \bm q_{l2}, \bm q_{lr2}, \bm q_{ry2},\Pi_2,\partial \Pi_2 /\partial \ell,\Lambda_2,\partial \Lambda_2 /\partial \ell,\bm A_d\bm x_{D_2},\bm A_d\bm q_{l_2}] \gets $ \texttt{smw\_lkl\_ing}($\bm N_2, \mathcal{I}_2, \bm \Gamma_r, \bm y_2, [\eta, k]$)
\State Form the SMW correction matrix, $\bm C_{smw}$ 
\State Compute $\bm x$ using Equation~\ref{smw_eq} 
\State Compute energy $\Pi$ and sum of the log of eigenvalues $\Lambda$ using Equation~\ref{smw_eq_energy} 
\State Compute $\partial \Pi/\partial \ell$ and $\partial \Lambda/\partial \ell$ using Equations~\eqref{pi_lambda_der},~\eqref{T_der} and~\eqref{correction_der}
\State Compute the multiplication of hierarchical matrix derivative to the solution of the linear system i.e. $\bm A_d\bm x$ using Equation~\eqref{smw_eq_der} \Comment{This is a critical part of hierarchical derivative computation. Within recursion, $\bm x$ mostly is a matrix, not a vector.}
\end{algorithmic}
\end{algorithm}

\begin{algorithm}
\caption{Ingredients of SMW computations for likelihood and its derivative: smw\_lkl\_ing($\bm N, \mathcal{I}, \bm \Gamma, \bm y, [\eta, k]$)} \flushleft \textbf{Input:}  List of nodes $\bm N$, an index set associated with the list of nodes $\mathcal{I}$, singular vectors $\bm \Gamma$, right-hand side $\bm y$, $\eta$ and $k$  \\
 \flushleft \textbf{Output:}  [$\bm x_{D_i}, \bm q_{l_i}, \bm q_{lr_i}, \bm q_{ry_i},\Pi,\partial \Pi/\partial \ell, \Lambda_i, \partial \Lambda_i /\partial \ell,\bm A_d\bm x_{D_i},\bm A_d\bm q_{l_i}]$ \\
 \begin{algorithmic}[1]
 \If{$\# \mathcal{I} \leq \eta$ }
 \State Compute full $\bm A_{ii}$ and $\partial \bm A_{ii}/\partial \ell$ using the nodes in $\bm N$
 \State Form the right-hand side by concatenating $\bm y$ and $\bm \Gamma$ i.e. $[\bm y ~\bm \Gamma]$
 \State xD\_ql $\gets \bm A_{ii} \backslash [\bm y ~\bm \Gamma]$
 \State $\bm x_{D_i} \gets$ xD\_ql($:,\{1,\ldots,d_y\}$),~ $\bm q_{l_i} \gets$ xD\_ql($:,\{d_y+1,\ldots,d_y+d_{\Gamma}\}$) 
 \State $\Pi_i \gets \bm y^T_{0}  \bm x_{D_{0i}},~\Lambda_i \gets f_{\Lambda}(\bm A_{ii})$  \Comment{$\bm y_0$ is the first column of $\bm y$.}
 \State $\bm A_d\bm x_{D_i} \gets (\partial \bm A_{ii}/\partial \ell)\bm x_{D_i},~\bm A_d\bm q_{l_i} \gets (\partial \bm A_{ii}/\partial \ell)\bm q_{l_i}$
 \State $\partial \Pi_i/\partial \ell \gets \bm x^T_{D_{0i}}  \bm A_d\bm x_{D_{0i}} $,~$\partial \Lambda_i/\partial \ell \gets Tr\left(\bm A_{ii} \backslash (\partial \bm A_{ii}/\partial \ell) \right) $
 \State $\bm q_{lr_i} \gets \bm \Gamma^T  \bm q_{l_i}$, $\bm q_{ry_i} \gets \bm \Gamma^T  \bm x_{D_i}$
 \Else
 \State Form the right-hand side by concatenating $\bm y$ and $\bm \Gamma$ i.e. $[\bm y ~\bm \Gamma]$  
 \State $[\Pi_i,\partial \Pi_i/\partial \ell,\Lambda_i,\partial \Lambda_i/\partial \ell,\text{xD\_ql,Ad\_xD\_ql}] \gets$  \texttt{lkl\_eval}($\bm N, [\bm y~\bm \Gamma], [\eta, k]$ )  \Comment{See Algorithm~\ref{alg:lkl_eval}}
 \State $\bm x_{D_i} \gets$ xD\_ql($:,\{1,\ldots,d_y\}$),~ $\bm q_{l_i} \gets$ xD\_ql($:,\{d_y+1,\ldots,d_y+d_{\Gamma}\}$)
  \State $\bm A_d\bm x_{D_i} \gets$ Ad\_xD\_ql($:,\{1,\ldots,d_y\}$),~ $\bm A_d\bm q_{l_i} \gets$ Ad\_xD\_ql($:,\{d_y+1,\ldots,d_y+d_{\Gamma}\}$)
 \State $\bm q_{lr_i} \gets \bm \Gamma^T  \bm q_{l_i}$,~ $\bm q_{ry_i} \gets \bm \Gamma^T  \bm x_{D_i}$ 
\EndIf 
\end{algorithmic} \label{alg:smw_lkl_ing}
\end{algorithm}

\begin{algorithm}
\caption{Randomized SVD with ID including derivative: \texttt{rsvd\_id\_d}($\bm N, [\mathcal{I}_1,\mathcal{I}_2],k$)}
\flushleft \textbf{Input:}  List of nodes $\bm N \in \mathbb{R}^{d \times n}$, two sets of indices $\mathcal{I}_1$ and $\mathcal{I}_2$ with $\# \mathcal{I}_1=n_1$ and $\# \mathcal{I}_2=n_2$ and the rank parameter $k$  \\
\flushleft \textbf{Output:}  $\bm A \simeq \bm \Gamma_l \bm \Gamma_m \bm \Gamma_r^T$,~$\partial \bm A/\partial \ell \simeq \bm \Gamma_{l_d} \bm \Gamma_{m_d} \bm \Gamma_{r_d}^T$\\
\begin{algorithmic}[1]
\State Compute the matrix $\tilde{\bm A} \in \mathbb{R}^{n_1 \times n_{innprod}}$ and $\partial \tilde{\bm A}/\partial \ell \in \mathbb{R}^{n_1 \times n_{innprod}}$ from the kernel function using $n_{innprod}$ cf. Equation~\eqref{n_inn}
\State Follow the steps 2-9 in Algorithm~\ref{alg:rsvd_id} for both $\bm A$ and $\partial \bm A/\partial \ell$ individually with a fixed common $\bm \Omega$
\end{algorithmic}\label{alg:rsvd_id_d}
\end{algorithm}


\section{Theoretical analysis}~\label{App_theory}
\subsection{Error estimate}~\label{EE_SM} In what follows, we first list some relevant basic linear algebra results. Unless otherwise stated, all matrix norms are written with respect to the Frobenius norm.  

\begin{itemize}
\item $\| \bm A \bm B \| \leq \| \bm A \| \| \bm B \|$ and $\| \bm A + \bm B \| \leq \| \bm A \| + \| \bm B\|$. These rules can be easily generalized to $n > 2$ matrices.
\item Error in product: $\| \bm A \bm B - \hat{\bm A}\hat{ \bm B}  \| \leq \| \bm A - \hat{\bm A} \| \| \bm B \| + \| \bm B - \hat{\bm B} \| \| \bm A \|.$
\item $\| \bm A \|_F \leq \sqrt{r} \| \bm A \|_2$ where $r$ is the rank of the matrix $\bm A$. We assume that $\bm A^{-1}$ exists, e.g. via Moore–Penrose inverse and has equivalent rank to $\bm A$, i.e. $r$. Then $\| \bm A^{-1} \|_F \leq \sqrt{r} \| \bm A^{-1} \|_2 = \sqrt{r}/\sigma_{min}(\bm A)$ where $\bm \sigma_{min}(\bm A)$ is the smallest singular value of $\bm A$.
\item $\|\bm A \|_2 \leq \| \bm A \|_F$. Using $\bm A - \hat{\bm A}$ instead of $\bm A$ and given $\|\bm A - \hat{\bm A}\|_F \leq \epsilon$ yield $\|\bm A - \hat{\bm A}\|_2 \leq \epsilon$. The $\ell_2$ norm error of inverse is  $\|\bm A^{-1} - \hat{\bm A}^{-1}\|_2 \leq \epsilon / \sigma_{min}(\bm A)$. Using the result in the previous item, gives the Frobenius norm of error $\|\bm A^{-1} - \hat{\bm A}^{-1}\|_F \leq \sqrt{r} \epsilon / \sigma_{min}(\bm A)$.
\end{itemize}

\subsubsection{Lemma~\ref{lemma_UCV}}\label{proof_lemmaUCV}\begin{proof}
We show this for a product of two matrices, e.g. $\bm A \bm B$.  We know $\| \bm A \bm B - \hat{\bm A} \hat{\bm B} \| = \| \bm A (\bm B - \hat{\bm B}) + (\bm A - \hat{\bm A}) \hat{\bm B}\| \leq \epsilon$. In the worst case, the norm $\| \bm B - \hat{\bm B} \|$ is bounded by $ \epsilon/\beta$ 
when $\| \bm A- \hat{\bm A} \| = 0$. Similarly, $\| \bm A - \hat{\bm A} \| \leq \epsilon/\beta$. To find $\epsilon/\beta^2$, we replace $\bm B$ with $\bm B \bm C$ in the joint product $\bm A \bm B$ and follow the same reasoning for $\bm A \bm B \bm C$. It is easy to see that the bound is $ \epsilon/\beta^2$ in this case. 
\end{proof}

\subsubsection{Lemma~\ref{lemma_frob_norm}}\label{proof_lemmafrobnorm}\begin{proof}
We start with the SMW formula: 
 \begin{equation*}
 \begin{array}{l l l }
\|  (\bm A_D+\bm U \bm C \bm V)^{-1} \| & \leq  &\| \bm A_D^{-1} \| + \| \bm A_D^{-1} \bm U (\bm C^{-1}+\bm V \bm A_D^{-1} \bm U)^{-1} \bm V \bm A_D^{-1} \| \\
\\
 & \leq  & \| \bm A_D^{-1}\|+  \beta^2 \| \bm A_D^{-1}\|^2  \| \bm C^{-1}_{smw} \|.  
\end{array}
\end{equation*}
Assuming $\| \bm A_D^{-1} \| > 1$ which is almost always true in our numerical computations, we neglect the first term on the right-hand side which is smaller than the second term, and write the Frobenius norm of the inverse at level $L-1$ as
\begin{equation*}
 \begin{array}{l l l }
\alpha_{L-1} = \| \bm A_D ^{(L-1)^{-1}} \| & \leq & \beta_L^2  \| \bm A_D^{(L)^{-1}} \|^2 (\sqrt{2k}/\sigma_{C_{min}})\\
\\
 & \leq  & \beta_L^2 \alpha_L^2 \kappa.   
\end{array}
\end{equation*}
Note that every term in the right-hand side of the inequality is computable. In particular, for a case when the small block within a larger block is of size $n_{min}$, $\alpha_L$ is computed using one diagonal block with size $n_{min}$. One simple estimate for $\alpha_L$ in that case is obtained by noting that the matrices are all regularized with a constant $\sigma_{n}$ in our GP regression. Therefore, the matrix and the minimum singular value can be considered as full rank and $\sigma_{n}$, respectively. These considerations yield $\alpha_L = \sqrt{n_{min}}/\sigma_n$.
Similarly, we can write the Frobenius norm of the inverse for the subsequent levels as
\begin{equation*}
\begin{array}{l l l}
\alpha_{L-2} & \leq & \beta_{L-1}^2 \alpha_{L-1} \kappa  \\
\\
&\leq& \beta_{L-1}^2 \beta^4_{L} \alpha_L^4 \kappa^3\\
\\
& \vdots &
\end{array}
\end{equation*}
which results in  $\alpha_{L-i}  \leq  \alpha_L ^{2^i} \kappa^{2i-1}\displaystyle \prod_{j=0}^{i-1} \beta_{L-j}^{2^{i-j-1}}$.  
\end{proof}

\subsubsection{Proposition~\ref{prob_frob_err}}\label{proof_propfroberr}\begin{proof}
The estimate is again achieved by analyzing the SMW formula. We know that $\bm A^{-1}_{D} \bm U$ and $\bm V \bm A^{-1}_{D}$ has identical submatrices; therefore, $f_{\alpha}(\bm A^{-1}_{D} \bm U)=f_{\alpha}(\bm V \bm A^{-1}_{D} )$ and $f_{\epsilon}(\bm A^{-1}_{D} \bm U)=f_{\epsilon}(\bm V \bm A^{-1}_{D} )$. We are looking for the estimate\\ $f_{\epsilon}( \bm A^{-1} )=f_{\epsilon} \left( (\bm A_D+\bm U \bm C \bm V)^{-1} \right)$, which is written by applying the triangular inequality and basic properties of the combination of matrix norms (presented earlier in this section) to the SMW formula, as
 \begin{equation*}
 \begin{array}{l l l }
f_{\epsilon} \left( (\bm A_D+\bm U \bm C \bm V)^{-1} \right) & \leq  & f_{\epsilon}(\bm A_D^{-1}) + 2 f_{\epsilon}(\bm A_D^{-1} \bm U) f_{\alpha} (\bm C^{-1}_{smw}) + f_{\epsilon}(\bm C^{-1}_{smw}) f^2_{\alpha} (\bm A_D^{-1} \bm U) 
\end{array}
\end{equation*}
To analyze the right-hand side of the above inequality, we expand only the second and third terms. At the end of the proof, we will have terms associated with $f_{\epsilon}(\bm A_D^{-1})$ which have much larger constants compared to $1$ in the above inequality, i.e. the constant $1$ is absorbed in the larger coefficient, which will be derived shortly. Using the variable $\kappa=f_{\alpha} (\bm C^{-1}_{smw})$ defined in Lemma~\ref{lemma_frob_norm}, and considering the relation for the error of inverse $\kappa f_{\epsilon}(\bm C_{smw}) = f_{\epsilon}(\bm C^{-1}_{smw})$, yields
\begin{equation}\label{proof1}
 \begin{array}{l l l }
f_{\epsilon}(\bm A^{-1}) & \leq  &  2 \kappa f_{\epsilon}(\bm A_D^{-1} \bm U)  + \kappa f_{\epsilon}(\bm C_{smw}) f^2_{\alpha} (\bm A_D^{-1} \bm U). 
\end{array}
\end{equation}
Having $\bm C_{smw} = \bm C^{-1} + \bm V \bm A^{-1}_{D} \bm U$, assuming the error in $\bm C^{-1}$ is negligible compared to error in $\bm V \bm A^{-1}_{D} \bm U$, and considering $f_{\alpha}(\bm U)=f_{\alpha}(\bm V),~f_{\epsilon}(\bm U)=f_{\epsilon}(\bm V)$, yields
\begin{equation}\label{proof2}
 \begin{array}{l l l }
f_{\epsilon}(\bm C_{smw}) & \leq  &  f_{\epsilon}(\bm U) f_{\alpha}(\bm A_D^{-1} \bm U) + f_{\alpha}(\bm U) f_{\epsilon}(\bm A_D^{-1} \bm U).
\end{array}
\end{equation}
Plugging in the estimate in~\eqref{proof2} into~\eqref{proof1}, we find:
\begin{equation}\label{proof3}
 \begin{array}{l l l }
f_{\epsilon}(\bm A^{-1}) & \leq  &  2 \kappa f_{\epsilon}(\bm A_D^{-1} \bm U)  + \kappa f^3_{\alpha} (\bm A_D^{-1} \bm U) f_{\epsilon}(\bm U) +  \kappa f^2_{\alpha} (\bm A_D^{-1} \bm U) f_{\alpha} (\bm U) f_{\epsilon}(\bm A_D^{-1} \bm U).
\end{array}
\end{equation}
We note that the first term in the right-hand side can be absorbed in the last term as typically $2 \ll f^2_{\alpha} (\bm A_D^{-1} \bm U) f_{\alpha} (\bm U)$. Focusing on the third term, we bound the error term $f_{\epsilon}(\bm A_D^{-1} \bm U)$ via $f_{\epsilon}(\bm A_D^{-1} \bm U) \leq f_{\epsilon}(\bm A_D^{-1} )  f_{\alpha} (\bm U) +  f_{\alpha} (\bm A_D^{-1} ) f_{\epsilon}(\bm U)$. Using this estimate in~\eqref{proof3} and simple manipulations yield
\begin{equation}\label{proof4}
 \begin{array}{l l l }
f_{\epsilon}(\bm A^{-1}) & \leq  &   \kappa f^2_{\alpha} (\bm A_D^{-1}) f^4_{\alpha}(\bm U) f_{\epsilon}(\bm A_D^{-1})  + 2\kappa f^3_{\alpha} (\bm A_D^{-1}) f^3_{\alpha}(\bm U) f_{\epsilon}(\bm U) 
\end{array}
\end{equation}
We are now ready to write the hierarchical error using the notation for the Frobenius norm of $\bm A^{-1}_D$ and $\bm U$ and their errors at level $i$, i.e. $\alpha_i,~\beta_i, \epsilon_{D,i}$ and $\epsilon_{OD,i}$. The estimate for hierarchical levels is written via the following recursive formula: 
\begin{equation}\label{proof5}
 \begin{array}{l l l }
\epsilon_{D,i-1} & \leq  &   a_i  \epsilon_{D,i} + b_i  
\end{array}
\end{equation}
where $a_i = \kappa \alpha_i^2 \beta_i^4$ and $b_i = 2\kappa \alpha_i^3 \beta_i^3 \epsilon_{OD,i}$.
Note that $\epsilon_{D,L} = 0$ at the deepest level $L$ for a case when the larger block consists of blocks with size $n_{min}$ as there is no approximation for the diagonal block at that size. For larger block size matrices we use $\varepsilon$ and write the chain of errors as
\begin{equation*}
\begin{array}{l l l}
\epsilon_{D,L-1} & \leq &  a_L \varepsilon + b_L \\
\\
\epsilon_{D,L-2} &\leq& a_{L-1} a_L \varepsilon + a_{L-1} b_L + b_{L-1}\\
\\
& \vdots &
\end{array}
\end{equation*}
Manipulating the indices, the following direct formula for the error is obtained:
\begin{equation}\label{direct_error1}
\epsilon_{D,L-i}  \leq \left(\prod_{j=L-i+1}^{k} a_j \right) \varepsilon + b_{L-i+1} +  \displaystyle \sum_{k=L-i+2}^{L} \left (\prod_{j=L-i+1}^{k-1} a_j \right ) b_{k}. 
\end{equation}

\end{proof}

To utilize the error estimate in practice, the formula in~\eqref{direct_error2} is applied once and the formula in~\eqref{direct_in_text} is applied several times, from bottom (lower right corner) to top (upper left corner) of the matrix, until the error for the original matrix is found. For instance, considering $n=10^6$, the computation of error for the original matrix takes one time application of~\eqref{direct_error2} (for computing the error from $n_{min}=100$ blocks to a $n=10^3$ block) and three times application of~\eqref{direct_in_text} (with updated $\varepsilon$ each time, for computing the error for $n=10^3 \rightarrow 10^4$, $n=10^4 \rightarrow 10^5$ and $n=10^5 \rightarrow 10^6$ blocks). 

\subsubsection{Lemma~\ref{lemma:storage_complexity}}\label{prooflemma:storage_complexity}
\begin{proof}
{\fontsize{8.2}{8.2}\selectfont 
\begin{equation}
\begin{array}{ l l l l l}
C_{\mathcal{H},St}(\bm T, k) &\displaystyle = & \displaystyle \sum_{r\times s \in \mathcal{L}^{-}(\bm T)} C_{F,st} (\# r,\# s)  & + &  \displaystyle \sum_{r\times s \in \mathcal{L}^{+}(\bm T)} C_{R,st }(\# r,\# s,k) \\
  \\
    & {\displaystyle \leq} & \displaystyle \sum_{r\times s \in \mathcal{L}^{-}(\bm T)} n^2_{min} & + &\displaystyle \sum_{r\times s \in \mathcal{L}^{+}(\bm T)} k (\# r + \# s) \\
    \\
        & \leq & \displaystyle \sum_{r \times s \in \mathcal{L}(\bm T)} n_{min} (\#r+\#s) & \stackrel{\text{Def}~\eqref{Csp}}\leq  &\displaystyle \sum_{i \in L} \sum_{r \in \bm T^{(i)}_{\mathcal{I}}} 2 C_{sp} n_{min} \# r  \\
       \\
        & \leq &  \displaystyle \sum_{i \in L} C_{sp} n_{min} \# \mathcal{I} &\leq& 2 \# L C_{sp} n_{min} \# \mathcal{I} \\
        \\
        & \leq & C_1 n \log(n)
\end{array}
\end{equation} }
\end{proof}

\subsubsection{Lemma~\ref{lemma:matvec}}\label{prooflemma:matvec}
\begin{proof}
In full blocks, the storage requirements are $\# r \#s$. The matvec operation for full blocks involves $2\#r\#s - \#r$ multiplications and $\# r$ additions. In reduced blocks, the storage requirements are $k(\# r+ \#s)$ and the matvec operation involves $2k(\#r+\#s) - \#r - k$ multiplications and $\# r$ additions.
\end{proof}

\subsubsection{Lemma~\ref{lemma:truncation}}\label{prooflemma:truncation}
\begin{proof}
The proof of this lemma follows from the fact that the cost is bounded by the cost of truncation (or SVD analysis) in large leaves, which is in the form of $k^2 (n+m)$:
\begin{equation*}
C_{\mathcal{H},Trunc} \leq k \sum_{r \times s \in \mathcal{L}^{+}(\bm T)} k (\# r + \#s)  \leq k C_{\mathcal{H},St}.
\end{equation*}
For the second inequality, see the second line in~\eqref{eq:storage_complexity}.
\end{proof}

\section{Log likelihood gradients}~\label{Appendix_loglkl_d}
To compute the sensitivities for $\Pi$ and $\Lambda$ cf. Equations~\eqref{smw_eq_energy} and~\eqref{pi_lambda_der}, we need to compute the following derivative terms,
\begin{equation}\label{T_der}
\begin{array}{l l l}
\displaystyle \frac{\partial T_1}{\partial \ell}  = \displaystyle -\bm x^T_{D_{01}} \frac{\partial \bm A_{11}}{\partial \ell} \bm x_{D_{01}} - \bm x^T_{D_{02}} \frac{\partial \bm A_{22}}{\partial \ell} \bm x_{D_{02}}, \qquad \displaystyle \frac{\partial T_{22}}{\partial \ell}  =  \displaystyle -T_{22} \frac{\partial \bm C_{smw}}{\partial \ell} T_{22} \\
\\
\displaystyle \frac{\partial T_{21}}{\partial \ell}  = \left[\displaystyle \bm x^T_{D_{01}} \frac{\partial \bm \Gamma_l}{\partial \ell}  - \bm x^T_{D_{01}}   \frac{\partial \bm A_{11}}{\partial \ell} \bm q_{l_1} \quad \bm x^T_{D_{02}} \displaystyle \frac{\partial \bm \Gamma_r}{\partial \ell}  - \bm x^T_{D_{02}}   \frac{\partial \bm A_{22}}{\partial \ell} \bm q_{l_2} \right ]\\
\\
\displaystyle \frac{\partial T_{23}}{\partial \ell}  = \left[\bm x^T_{D_{02}} \displaystyle \frac{\partial \bm \Gamma_r}{\partial \ell}  - \bm x^T_{D_{02}}   \frac{\partial \bm A_{22}}{\partial \ell} \bm q_{l_2} \quad \displaystyle \bm x^T_{D_{01}} \frac{\partial \bm \Gamma_l}{\partial \ell}  - \bm x^T_{D_{01}}   \frac{\partial \bm A_{11}}{\partial \ell} \bm q_{l_1}  \right ]
\end{array}
\end{equation}
Note that there are terms that appear in the form of $T_{22}T_{23}$ and $T_{21}T_{22}$ in the above expressions, which are small size linear solves (involve $\bm C_{smw} \in \mathbb{R}^{2k \times 2k}$ inversion). The above derivations are not complete as we still need to find $\partial \bm C_{smw}/\partial \ell$, i.e. the sensitivity of the SMW correction matrix. Such matrix sensitivity computation requires the sensitivity of each term in the matrix, i.e.
\begin{equation}\label{correction_der}
\begin{array}{l}
\displaystyle \frac{\partial \bm q_{lr_1}}{\partial \ell} =\displaystyle 2 \bm q^T_{l_1}\frac{\partial \bm \Gamma_l}{\partial \ell}  - \bm q^T_{l_1} \frac{\partial \bm A_{11}}{\partial \ell} \bm q_{l_1},\qquad \displaystyle \frac{\partial \bm q_{lr_2}}{\partial \ell} =\displaystyle 2 \bm q^T_{l_2}\frac{\partial \bm \Gamma_r}{\partial \ell}  - \bm q^T_{l_2} \frac{\partial \bm A_{22}}{\partial \ell} \bm q_{l_2},\\
\\
\displaystyle \frac{\partial \bm \Gamma^{-1}_m}{\partial \ell} =- \text{diag}\left(\text{diag}({\partial \bm \Gamma_m}/ {\partial \ell} )\oslash \text{diag}(\bm \Gamma_m)^{\circ 2} \right ). 
\end{array}
\end{equation}
where $.^{\circ 2}$ denotes Hadamard power $2$. In the above derivations, the sensitivities of the SVD factorization of off-diagonal matrix i.e. ${\partial \bm \Gamma_l}/{\partial \ell}, ~{\partial \bm \Gamma_m}/{\partial \ell}$ and ${\partial \bm \Gamma_m}/{\partial \ell}$ are needed. Computing the SVD derivative is relatively straightforward; its details are provided in Appendix~\ref{App_svd_der}. From the above derivations, it is also apparent that the crucial part of the sensitivity analysis (and less intuitive part) is the hierarchical computation of $(\partial \bm A_{ii}/\partial \ell) \bm x_{D_i}$ and $(\partial \bm A_{ii}/\partial \ell) \bm q_{l_i}$. These two terms are obtained by multiplying the sensitivity of the hierarchical matrix $\partial \bm A/\partial \ell$, which includes $\partial \bm A_{11}/\partial \ell$, $\partial \bm A_{22}/\partial \ell$ and their associated off-diagonal matrix sensitivity to the back of equation~\eqref{smw_eq} (which yields $\bm x$). To clarify the sensitivity computation further, we first show the sensitivity of the hierarchical matrix $\bm A$ (in the matrix form):

\begin{equation}~\label{smw_eq_der_onlyA}
\begin{array}{l l l}
\displaystyle \frac{\partial \bm A} {\partial \ell} & = &\left [ \begin{array}{l l}
 \partial \bm A_{11}/\partial \ell & \bm \Gamma_{l_d} \bm \Gamma_{m_d}\bm \Gamma^T_{r_d}\\
 \bm \Gamma_{r_d} \bm \Gamma^T_{m_d}\bm \Gamma^T_{l_d}  & \partial \bm A_{22}/\partial \ell 
 \end{array} \right ] 
 \end{array}
\end{equation}
where $\bm \Gamma_{l_d}, \bm \Gamma_{m_d},\bm \Gamma_{r_d}$ are obtained from the SVD of the off-diagonal matrix sensitivity. It is important to note the difference between the sensitivity of the SVD of the off-diagonal block and the SVD of the off-diagonal matrix sensitivity cf. Appendix~\ref{App_svd_der}. We also define the following notations: $\bm A_d\bm x \coloneqq \displaystyle \frac{\partial \bm A}{\partial \ell} \bm x,~\bm A_d\bm x_{D_i} \coloneqq \displaystyle \frac{\partial \bm A_{ii}}{\partial \ell} \bm x_{D_i},~\bm A_d\bm q_{l_i} \coloneqq \displaystyle \frac{\partial \bm A_{ii}}{\partial \ell} \bm q_{l_i}$.

Using the definitions above and multiplying~\eqref{smw_eq_der_onlyA} to the left of~\eqref{smw_eq} yields:
\begin{equation}~\label{smw_eq_der}
\begin{array}{l l l}
\bm A_d\bm x   
  &=& \left [ \begin{array}{l l}
 \partial \bm A_{11}/\partial \ell & \bm \Gamma_{l_d} \bm \Gamma_{m_d}\bm \Gamma^T_{r_d}\\
 \bm \Gamma_{r_d} \bm \Gamma^T_{m_d}\bm \Gamma^T_{l_d}  & \partial \bm A_{22}/\partial \ell 
 \end{array} \right ] 
  \left(\left [ \begin{array}{l}
 \bm x_{D_1} \\
 \bm x_{D_2}  
 \end{array} \right ] -\left [ \begin{array}{l}
 \bm q_{l_1}\bm s_{qry_1} \\
 \bm q_{l_2} \bm s_{qry_2} 
 \end{array} \right ]  \right) \\
 \\
 &=& \left [ \begin{array}{l }
 \bm A_d\bm x_{D_1}+ \bm \Gamma_{l_d} \bm \Gamma_{m_d}\bm \Gamma^T_{r_d} \bm x_{D_2}\\
 \bm \Gamma_{r_d} \bm \Gamma^T_{m_d}\bm \Gamma^T_{l_d}  \bm x_{D_1}  +  \bm A_d\bm x_{D_2}
 \end{array} \right ] - \left [ \begin{array}{l }
\bm A_d\bm q_{l_1} \bm s_{qry_1}  + \bm \Gamma_{l_d} \bm \Gamma_{m_d}\bm \Gamma^T_{r_d} \bm q_{l_2} \bm s_{qry_2} \\
 \bm \Gamma_{r_d} \bm \Gamma^T_{m_d}\bm \Gamma^T_{l_d}  \bm q_{l_1}  \bm s_{qry_1}+ \bm A_d\bm q_{l_2} \bm s_{qry_2}
 \end{array} \right ]. 
 \end{array}
\end{equation}
The result of above matrix computation in a hierarchical fashion yields key sensitivity terms $\bm A_d\bm x_{D_1}, \bm A_d\bm x_{D_2}, \bm A_d\bm q_{l_1},$ and $\bm A_d\bm q_{l_2}$. The algorithmic computation of these terms is shown in Algorithm~\ref{alg:smw_lkl_ing}.

\section{SVD derivative}\label{App_svd_der}
We follow the derivations in~\cite{Townsend16} and report the final results using the same notation in~\cite{Townsend16}.

The derivatives of SVD factorization are 
\begin{equation}~\label{svd_der1}
\begin{array}{l l l}
d \bm U  & = & \bm U (\bm F \circ [\bm U^T d \bm A \bm V \bm S + \bm S \bm V^T d \bm A^T \bm U]) + (\bm {I}_m - \bm U \bm U ^T) d \bm A \bm V \bm S^{-1}\\
d \bm S & = & \bm{I}_k \circ [\bm U^T d \bm A \bm V] \\
d \bm V  & = & \bm V (\bm F \circ [\bm S \bm U^T d \bm A \bm V + \bm V^T d \bm A^T \bm U \bm S]) + (\bm{I}_n - \bm V \bm V^T) d \bm A^T \bm U \bm S^{-1}
\end{array}
\end{equation}
where $\bm F_{ij}= \delta_{ij}/{(s_{jj}^2-s_{ii}^2)}$ with $s_{ii}$, $\circ$ and $\bm {I}_k$  denoting singular values, Hadamard (entry-wise) multiplication, and a $k \times k$ identity matrix respectively. The following algorithm outlines the steps for computing the sensitivity of SVD factorization: 

\begin{algorithm}
\caption{Derivative of SVD: \texttt{svd\_d}($\bm N, [\mathcal{I}_1,\mathcal{I}_2], k$)}
\flushleft \textbf{Input:}  List of nodes $\bm N \in \mathbb{R}^{d \times n}$, two sets of indices $\mathcal{I}_1$ and $\mathcal{I}_2$ and the rank parameter $k$  \\
\flushleft \textbf{Output:} $[\bm \Gamma_l, \bm \Gamma_m, \bm \Gamma_r],[\bm \Gamma_{l_d}, \bm \Gamma_{m_d}, \bm \Gamma_{r_d}],[\partial \bm \Gamma_l/\partial \ell,\partial \bm \Gamma_m/\partial \ell,\partial \bm \Gamma_r/\partial \ell]$\
\begin{algorithmic}[1]
\State $[\bm \Gamma_l, \bm \Gamma_m, \bm \Gamma_r, \bm \Gamma_{l_d}, \bm \Gamma_{m_d}, \bm \Gamma_{r_d}] \gets \texttt{rsvd\_id\_d}(\bm N,[\mathcal{I}_1, \mathcal{I}_2], k)$ \Comment{See Algorithm~\ref{alg:rsvd_id_d}}
\State Use Equation~\eqref{svd_der1} in conjunction with the factorization $d\bm A \simeq \bm \Gamma_{l_d}\bm \Gamma_{m_d}\bm \Gamma_{r_d}$ to find $\partial \bm \Gamma_l/\partial \ell,~\partial \bm \Gamma_m/\partial \ell$ and $\partial \bm \Gamma_r/\partial \ell$ 
\end{algorithmic}\label{alg:svd_der}
\end{algorithm}


\section{Numerical Example I}\label{numexI}
In the following we provide more extensive computational studies/discussions associated with the first numerical example:

\subsection{Rank structure of kernel matrices} Similarly to the first experiment in Section~\ref{S5_1}, we study the singular values of the resulting kernel matrices for both exponential and squared exponential kernels to get an insight about rank structure of these matrices. To this end, we consider different sizes of indices, $n= \# \mathcal{I}$, perform the \texttt{permute} algorithm and generate the off-diagonal block $\bm A_{12} \in \mathbb{R}^{n_1 \times n_2}$ with  $n_1=0.1n$ and $n_2=0.9n$. The distribution of nodes is similar to the aforementioned experiment. The hyperparameters are fixed in both kernels, i.e. $\ell=\sqrt{2},1$. We then consider thresholds for the magnitude of singular values to find the number of effective rows (or columns) in the low-rank factorization. In particular, we count the number of singular values that satisfy $\sigma_i > 10^{-3},~\sigma_i > 10^{-8}$ and $\sigma_i > 10^{-3},~\sigma_i > 10^{-12}$ in exponential and squared-exponential kernels (by performing direct SVD). The result is shown in Figure~\ref{svl_index}. It is apparent that the squared-exponential kernel yields a lower rank matrix. The rate of increase in the effective rank for the more stringent case of $\sigma_i > 10^{-12}$ is quite slow which is promising for low-rank factorization of large size matrices. Extrapolating the curve with diamond markers in the right plot (visually) for a large size, e.g. $n=10^6$ yields effective rank in the order of $45\sim50$. This means that a $k=50$ SVD factorization of a large off-diagonal block, $\bm A_{12} \in \mathbb{R}^{10^5~\times~9\times10^5}$ (with nodes $\bm N \in [0,1]^2$ and $\ell=1$) is a plausible approximation.    

\begin{figure}[h]
\centering
\includegraphics[width=1.5in]{./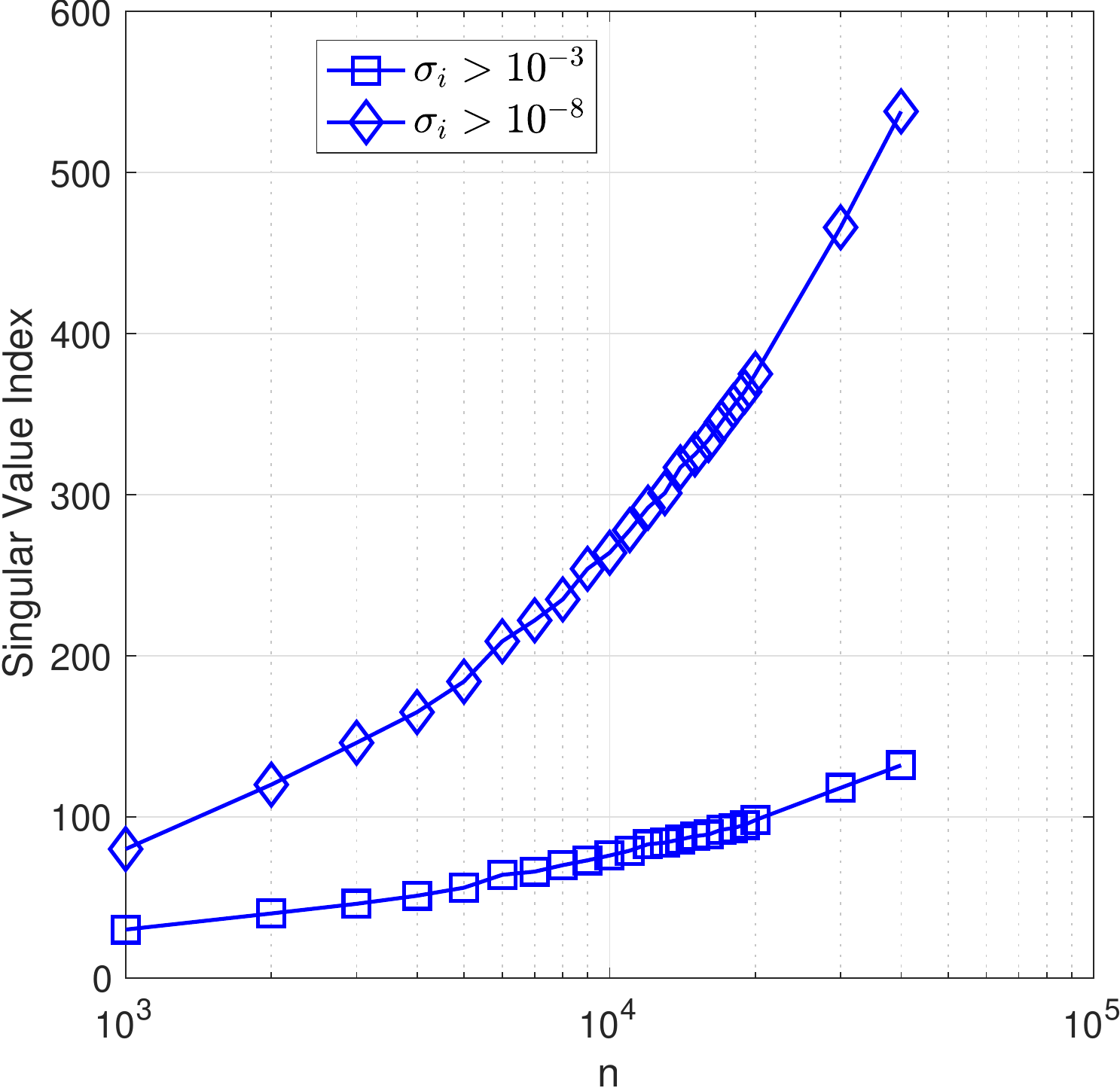}
\includegraphics[width=1.51in]{./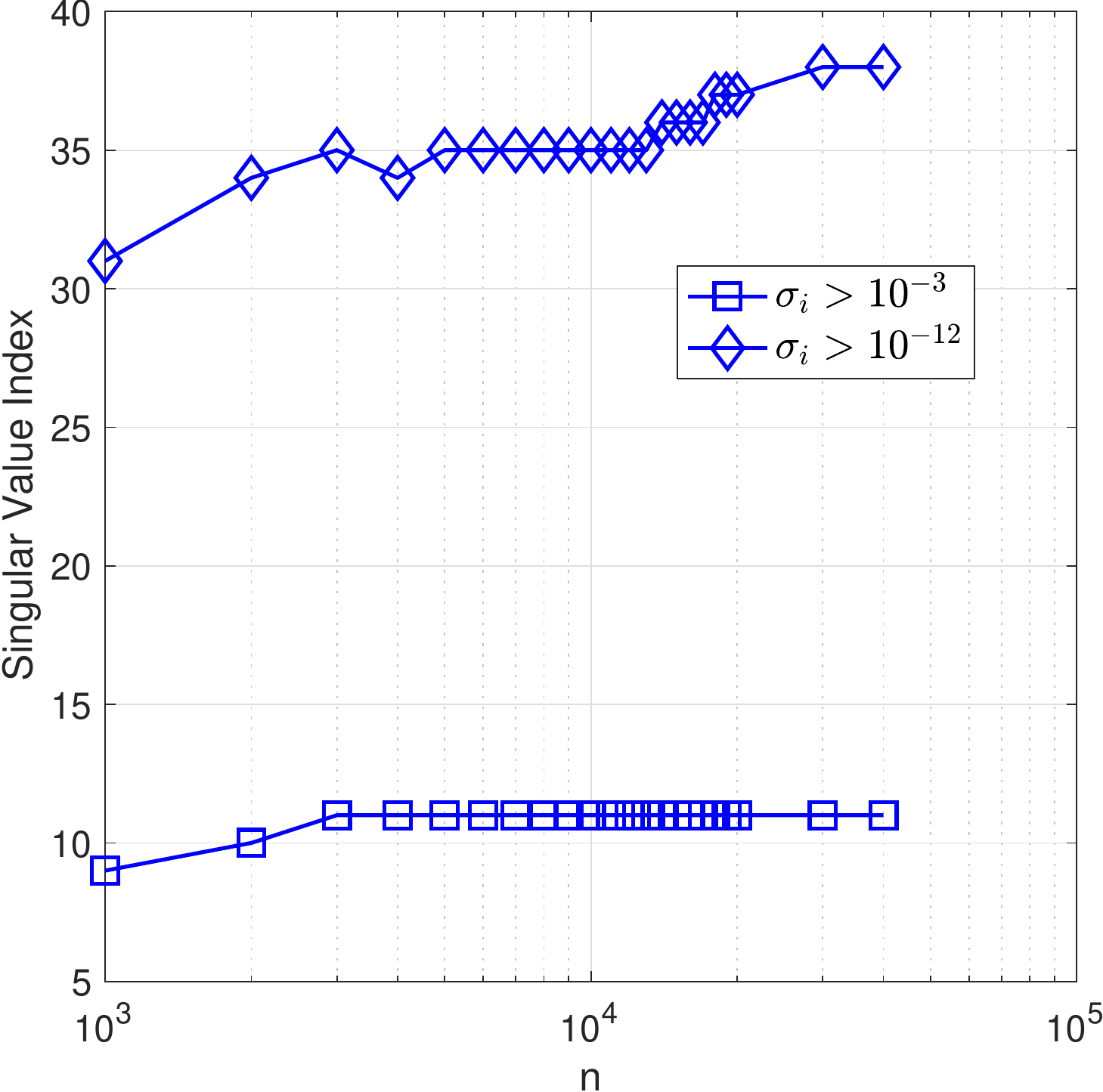}\\
\caption{\small{The number of singular values that satisfies a magnitude threshold for the exponential (left) and squared exponential (right) kernels.}}\label{svl_index}
\end{figure}

\subsection{Effect of subsampling on the range approximator} For this experiment associated with the subsampling discussion in Section~\ref{Sec3_1} cf. Equation~\eqref{n_inn}, we consider a relatively large rectangular matrix (for which we can perform direct computations) with the total number of nodes $\# \mathcal{I} = 15000$. We again use \texttt{permute} to generate $\# \mathcal{I}_1$ and $\# \mathcal{I}_2$ and subsequently $\bm A \gets \bm A_{12} \in \mathbb{R}^{5000 \times 10000}$. We also set $k=45,~p=5$, and therefore we consider rank $ k+p = 50$ factorization of the matrix. We study the effect of subsampling on 1) direct statistical computation of the error $\| \bm A - \bm Q \bm Q^T \bm A \|_F$ and 2) two main ingredients in the theoretical estimate of the error (cf. Theorem~\ref{theorem1}), $\bm \Omega_1$ and $\bm \Omega_2$, explained briefly next. 

\noindent
\textbf{1) Statistical computation of the error $\| \bm A - \bm Q \bm Q^T \bm A \|_F$:} We first statistically study the error $\| \bm A - \bm Q \bm Q^T \bm A \|_F$ for different choices of $n_{innprod}$, i.e. $n_{innprod} = \delta (k+p), ~\delta =2,3,4,5$ and full computation i.e. $n_{innprod}=n=10^4$ by performing $100$ runs for each choice. The probability distribution functions (PDFs; obtained via kernel density estimation) of error are shown in the first pane of Figure~\ref{subsampling_Q}. The mean of error associated with the full computation, i.e. with $n_{innprod}=n=10^4$, is $1.087\times10^{-4}$, and the mean corresponding to $\delta=2$ is one order of magnitude larger i.e. $2.327 \times 10^{-3}$. However, as we increase the number of samples slightly, the error quickly converges to the full computation error; e.g. the case of only $5(k+p)=250$ samples yields $2.004\times10^{-4}$, which is close to the full computation error. Also note that this error is pertinent only to $\| \bm A - \bm Q \bm Q^T \bm A \|_F$, and it does not have a significant impact on the error in the computation of $\bm A \bm x = \bm y$. The overall framework for linear solve (or likelihood evaluation) is more prone to inaccuracies due to the lack of sufficient rank consideration $k$ (see Figures~\ref{lkl_values} and~\ref{lkl_values_UncBand}) compared to a less significant parameter $n_{innprod}$. Therefore, depending on the application of the overall framework in different contexts, consideration of smaller $n_{innprod}$ might be possible. 

\noindent
\textbf{2) Building blocks of the proof of Theorem~\ref{theorem1}:} The proof (explained in detail in Section 10 of~\cite{nhalko11}) involves two random submatrices $\bm \Omega_1 = \bm V^T_1 \bm \Omega$ and $\bm \Omega_2 = \bm V^T_2 \bm \Omega$ where $\bm V_1 \in \mathbb{R}^{k \times k}$ and $\bm V_2 \in \mathbb{R}^{k \times n-k}$ are two subblocks of the right singular vectors of $\bm A \in \mathbb{R}^{m \times n}$, denoted by $\bm V$, i.e. $\bm V = [\bm V_1 ~\bm V_2]$. Denoting the psuedo-inverse of $\bm \Omega_1$ by $\bm \Omega^{\dagger}_1$, the random submatrices admit  $\mathbb{E} (\|\bm \Omega^{\dagger}_1\|^2_F) = {k}/{(p-1)}$ and $\mathbb{E}(\|\bm \Omega_2\|^2_F) = 1$ (by applying two standard results cf.~\cite{nhalko11}). The choice of minimum $2k$ samples in our subsampling strategy ensures that $\bm V_2$ has at least $k$ columns. We again statistically study the mean of the Frobenius norm of $\bm \Omega^{\dagger}_1$ and $\bm \Omega_2$ using similar choices of $n_{innprod}$ cf. second and third panes of Figure~\ref{subsampling_Q}. In these results, the subsampling has even less impact on two main ingredients of the theoretical estimate (as all choices of $n_{innprod}$ result in relatively similar distributions for $\|\bm \Omega^{\dagger}_1\|^2_F$ and $\|\bm \Omega_2\|^2_F$), i.e. the upper bound in Theorem~\ref{theorem1}. 
\begin{figure}[h]
\centering
\includegraphics[width=1.5in]{./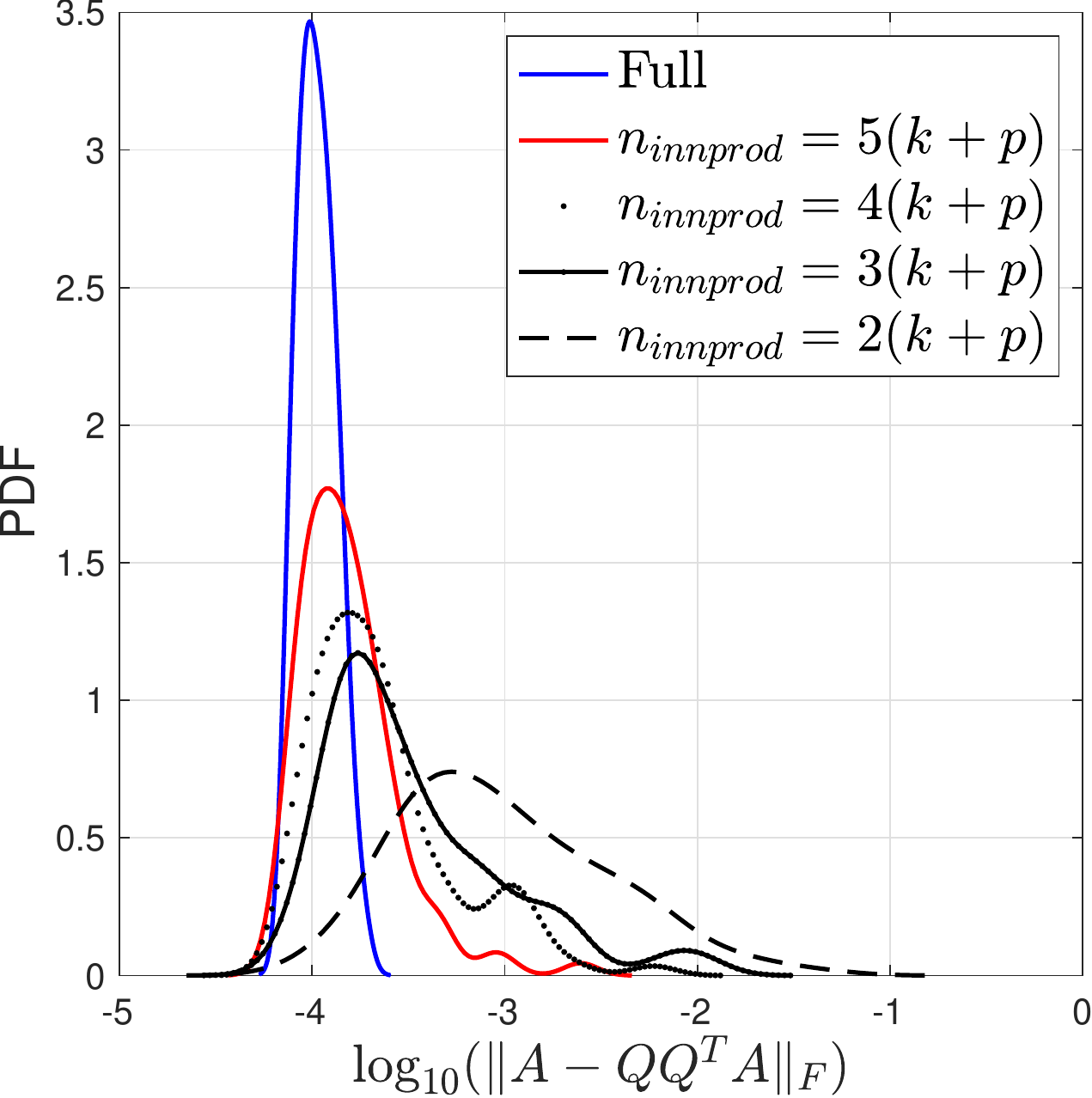}
\includegraphics[width=1.55in]{./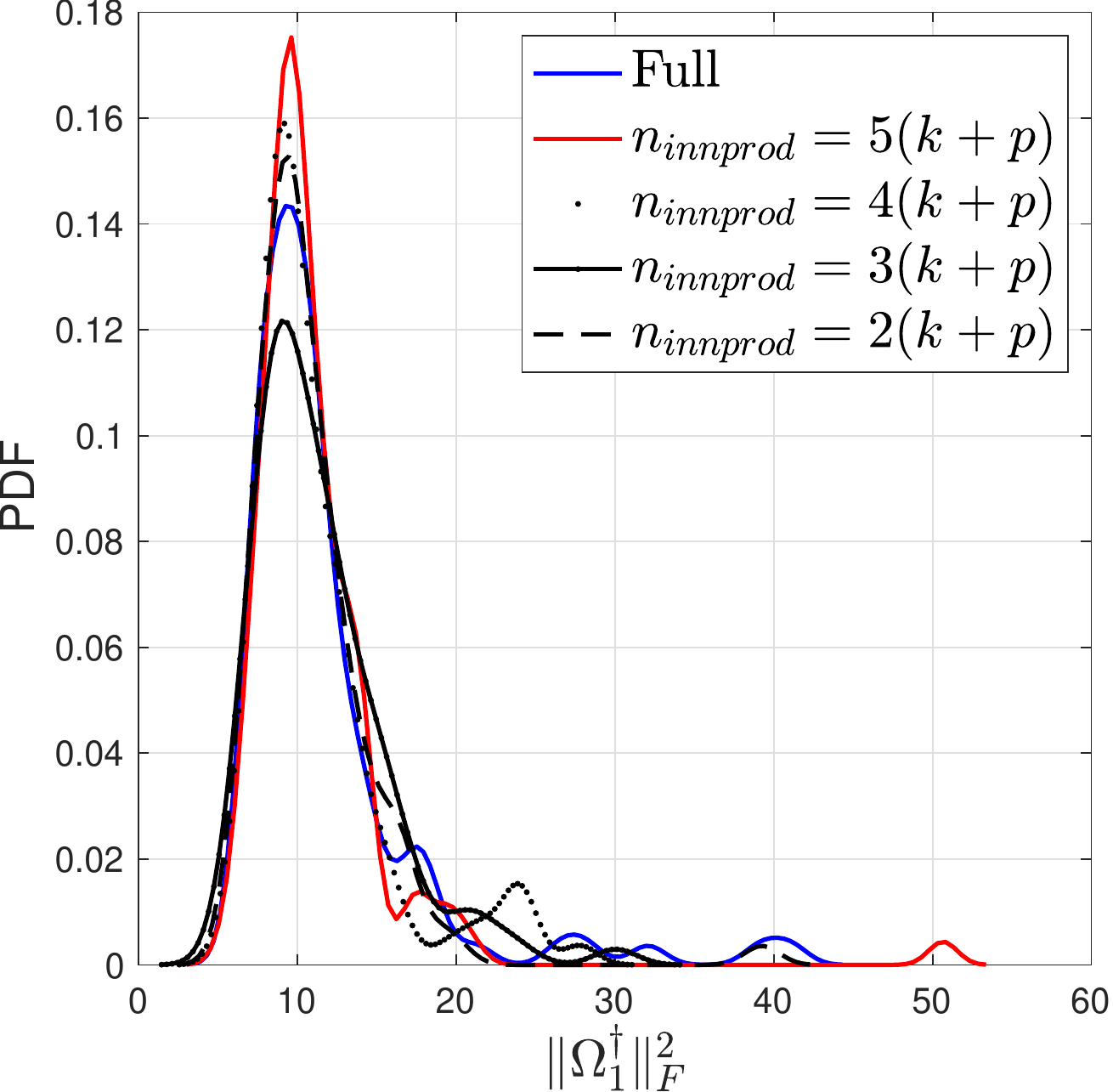}
\includegraphics[width=1.55in]{./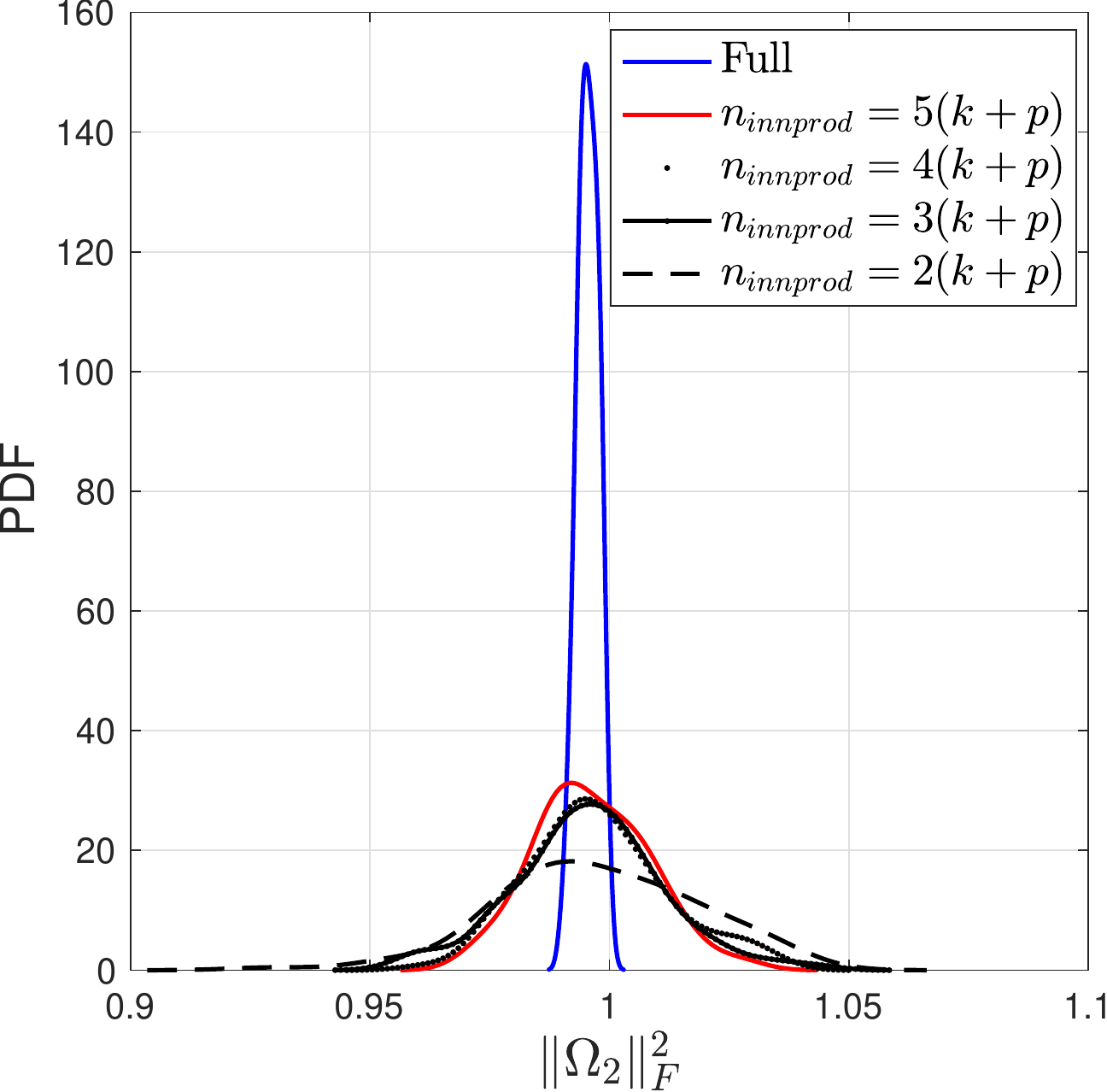}
\caption{\small{Effect of subsampling on the range approximator $\bm Q$ error and two main computational components $\|\bm \Omega^{\dagger}_1\|_F$ and $\|\bm \Omega_2\|_F$ in its theoretical estimate.}}\label{subsampling_Q}
\end{figure}

\subsection{Empirical error} In this example we empirically investigate the hierarchical error for computation of $\bm A \bm x = \bm y$ as discussed in Section~\ref{sec:error}. The matrix with size is $\# \mathcal{I} = 5000$ and we set $\eta=105$. First, we compute the probabilistic error in Theorems~\ref{theorem1} and~\ref{theorem2} on the largest off-diagonal block $\bm A \gets \bm A_{12} \in \mathbb{R}^{1000 \times 4000}$. We again set $k=45,~p=5$ and compute the error $\| \bm A - \bm Q \bm Q^T \bm A \|$, $100$ times. We also compute the right-hand sides in the estimates of Theorems~\ref{theorem1} and~\ref{theorem2} using the parameters mentioned after Theorem~\ref{theorem2}, i.e. $t=e,~u=\sqrt{2p}$, which results in probability of failure $3e^{-p}=0.0202$. The result for  $\log_{10} (\| \bm A - \bm Q \bm Q^T \bm A \|_F)$ is shown in Figure~\ref{err_hierarchical} (left pane). The mean of the log error and the threshold for probability of failure associated with this plot is obtained as $\mu_{\epsilon, emp}=-2.6789$ and $\bar{\epsilon}_{emp}=-2.473$. Taking the log from the right-hand side of estimates yields $\mu_{\epsilon, theo}=-1.5011$ and $\bar{\epsilon}_{theo}=-0.8754$ which means that the empirical values are in agreement with the theoretical estimates in~\ref{theorem1} and~\ref{theorem2}; i.e. the actual error is within the bound provided by the analytical estimates. 

Using the estimates for the error in the off-diagonal blocks (by assuming a log normal distribution for the error) and the hierarchical error estimate in Section~\ref{sec:error} we compute the analytical errors in different levels within the $n=10^3$ blocks and the original matrix. The computation of hierarchical error depends on $\beta_i$ and $\kappa$ estimates. The actual evaluation of analytical estimates for these quantities can potentially result in very large values (as these involve Frobenius norm on large matrices). Instead, we assign particular small values to these quantities and investigate the probabilistic error for the inversion algorithm, i.e. we set $\beta_i=1,~\kappa=1.02$. The empirical probabilistic errors for different levels are shown in the second and third panes. Finally, we normalize the estimates for the inversion error in the original matrix in both analytical and empirical scenarios with respect to their mean, and find the distribution of this normalized error via kernel density estimation. The result is shown in the right pane of Figure~\ref{err_hierarchical}. The distribution of normalized errors (denoted by $\tilde{\epsilon}_{D,0}$ notation) in both cases are almost in agreement; however, in terms of constant these two estimates are far away from each other, i.e. $\mu_{\log(\epsilon),theo}=3.34$ and $\mu_{\log(\epsilon),emp}=-3.94$.

\begin{figure}[h]
\centering
\includegraphics[width=1.19in]{./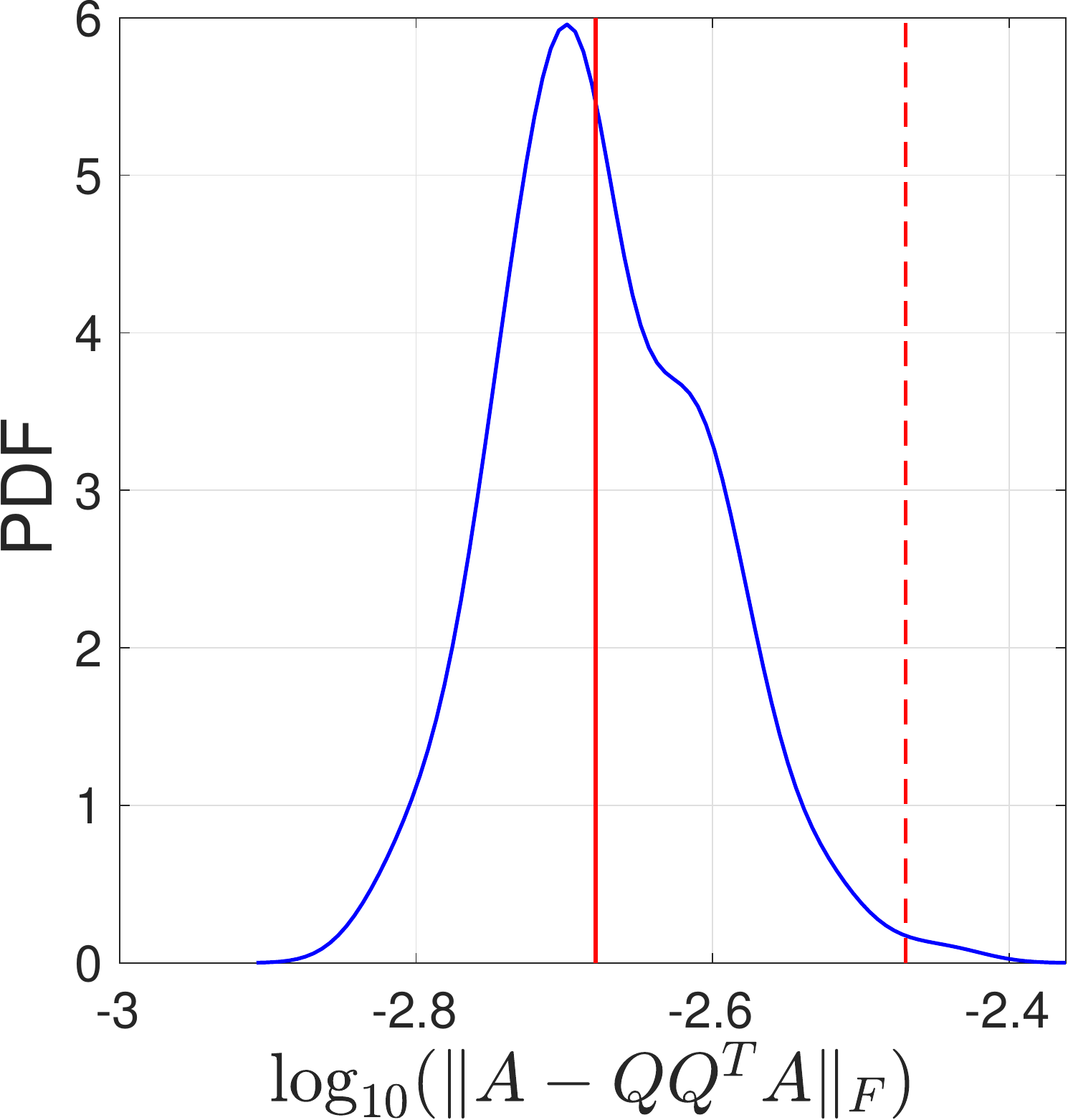}
\includegraphics[width=1.23in]{./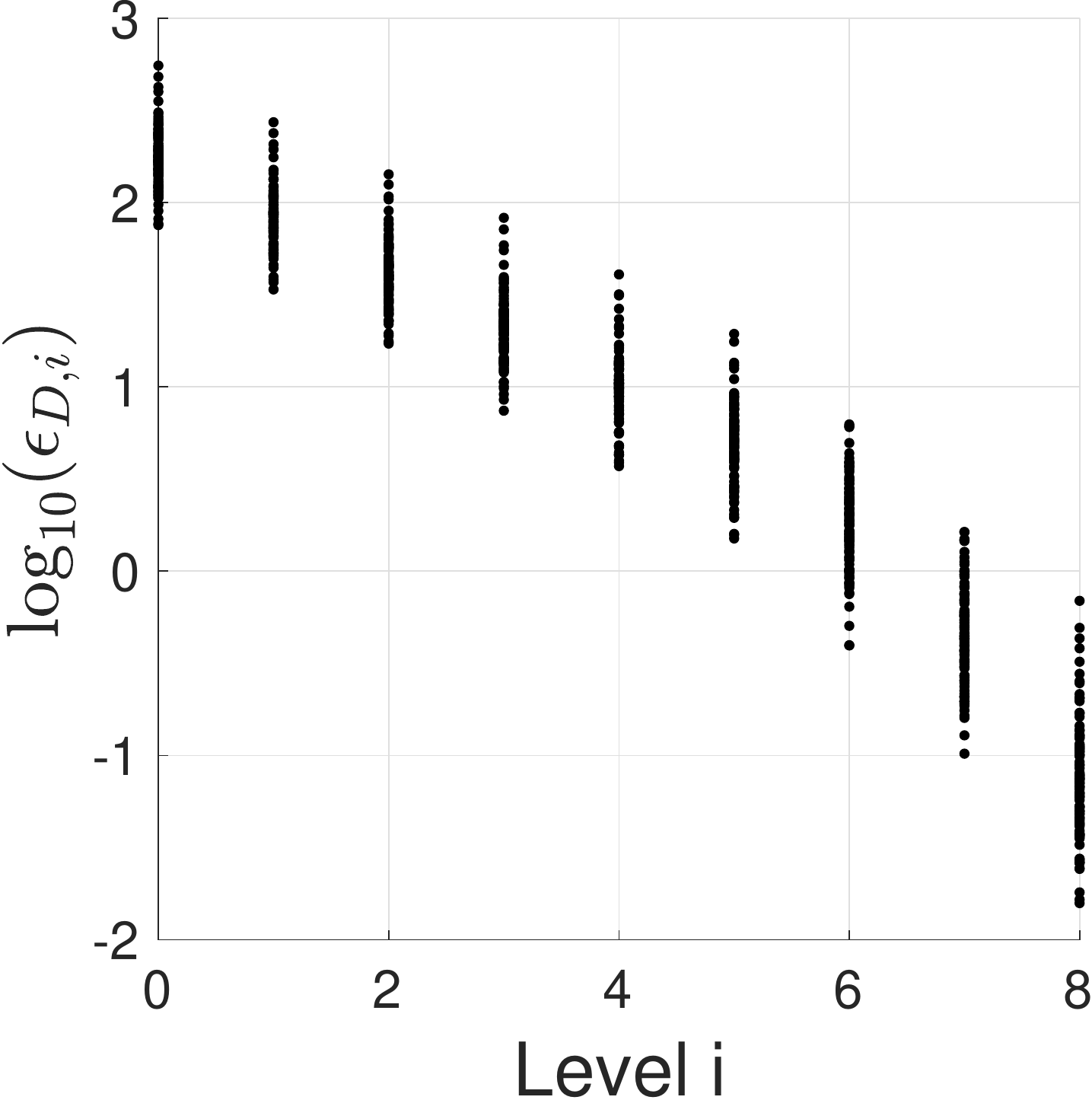}
\includegraphics[width=1.24in]{./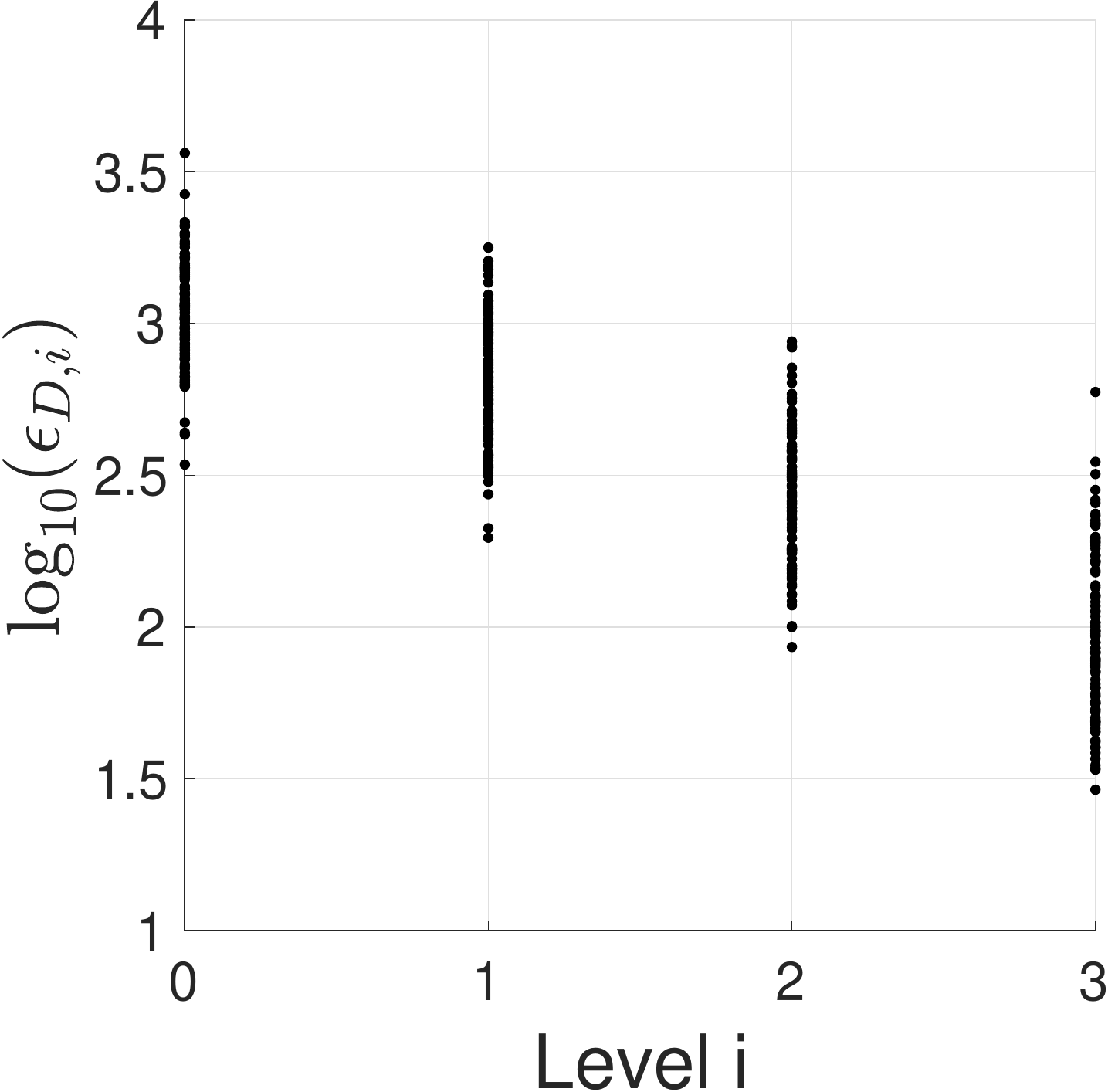}
\includegraphics[width=1.23in]{./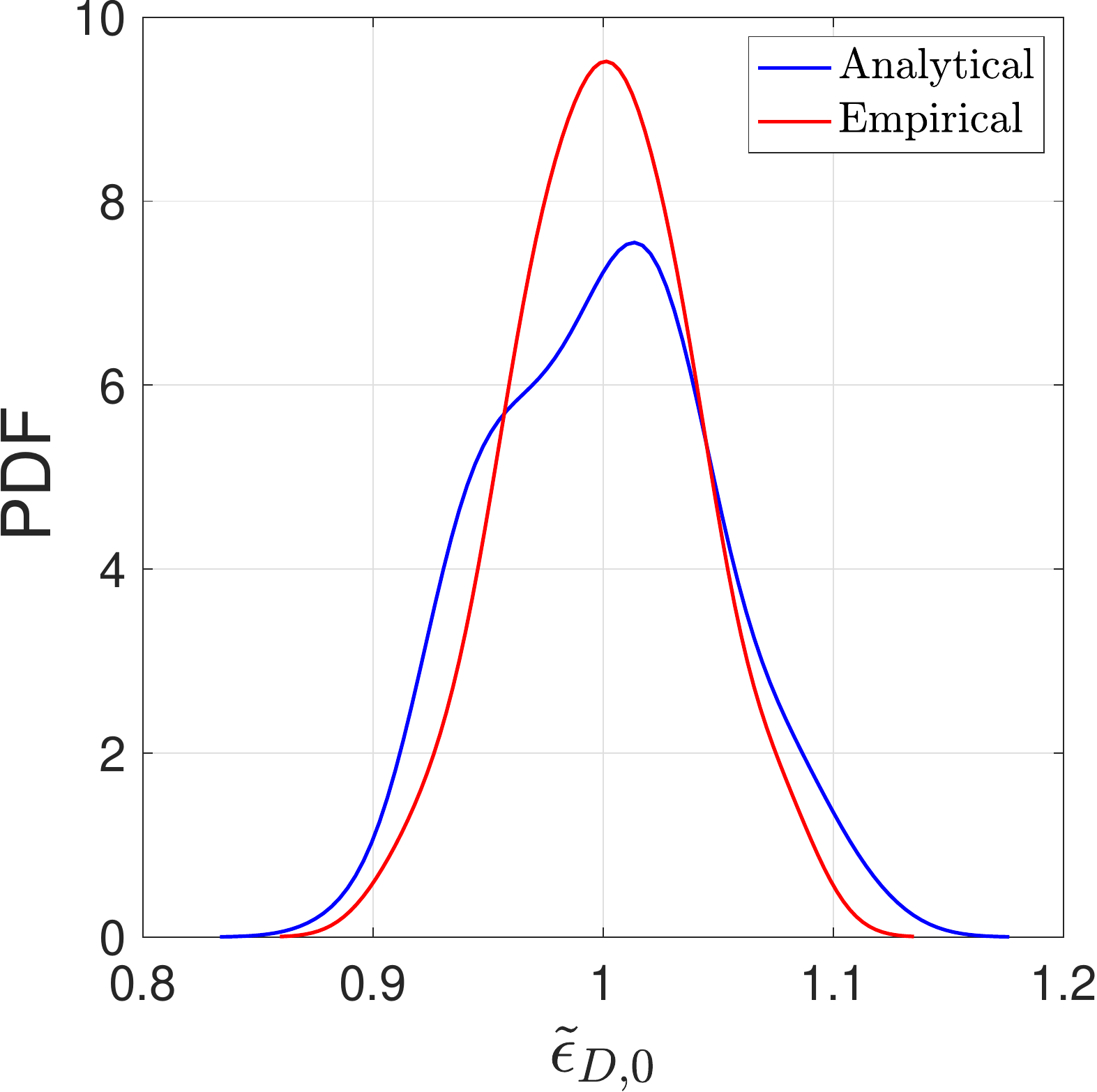}
\caption{\small{A computational study on the results of Theorems~\ref{theorem1} and~\ref{theorem2} (first pane), hierarchical errors in the block with $1000$ nodes (second pane), hierarchical errors in the block with $5000$ nodes (third pane), and the distribution of normalized error (analytical and empirical) in the original matrix with $5000$ nodes (right pane).}}\label{err_hierarchical}
\end{figure}

\subsection{Accuracy of likelihood terms}  Results in this subsection are associated with discussions in Sections~\ref{Sec4_1} and~\ref{Sec4_4}. To study the accuracy of the likelihood terms and their derivative, we consider a kernel matrix with size $\# \mathcal{I} = 5000$. The scalar hyperparameter for two cases of squared exponential and exponential kernel is set to $\ell = 1,~\sqrt{2}$. We also consider $\eta=105$ in this case and set $\sigma^2_n=10^{-3}$ to find $\tilde{\bm A} = \bm A + \sigma^2_n \bm{I}$. Figure~\ref{lkl_values} shows the results for the exponential (top row) and squared exponential kernels (bottom row). The left and right columns in row show the results for two scenarios: 1) no node permutation and 2) node permutation based on the procedure in Section~\ref{S2_2}. 
\begin{figure}[h]
\centering
\includegraphics[width=2.0in]{./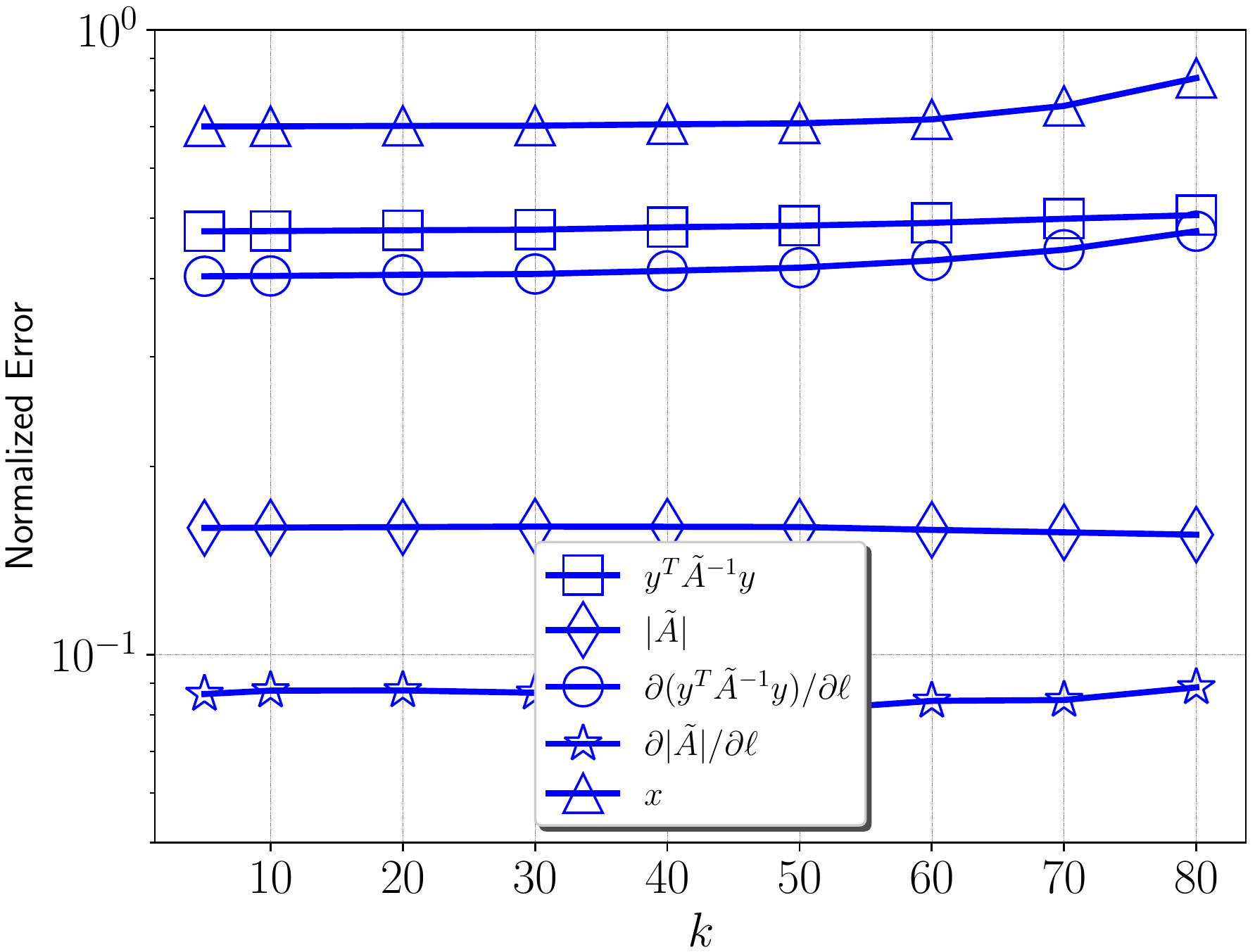}
\includegraphics[width=2.0in]{./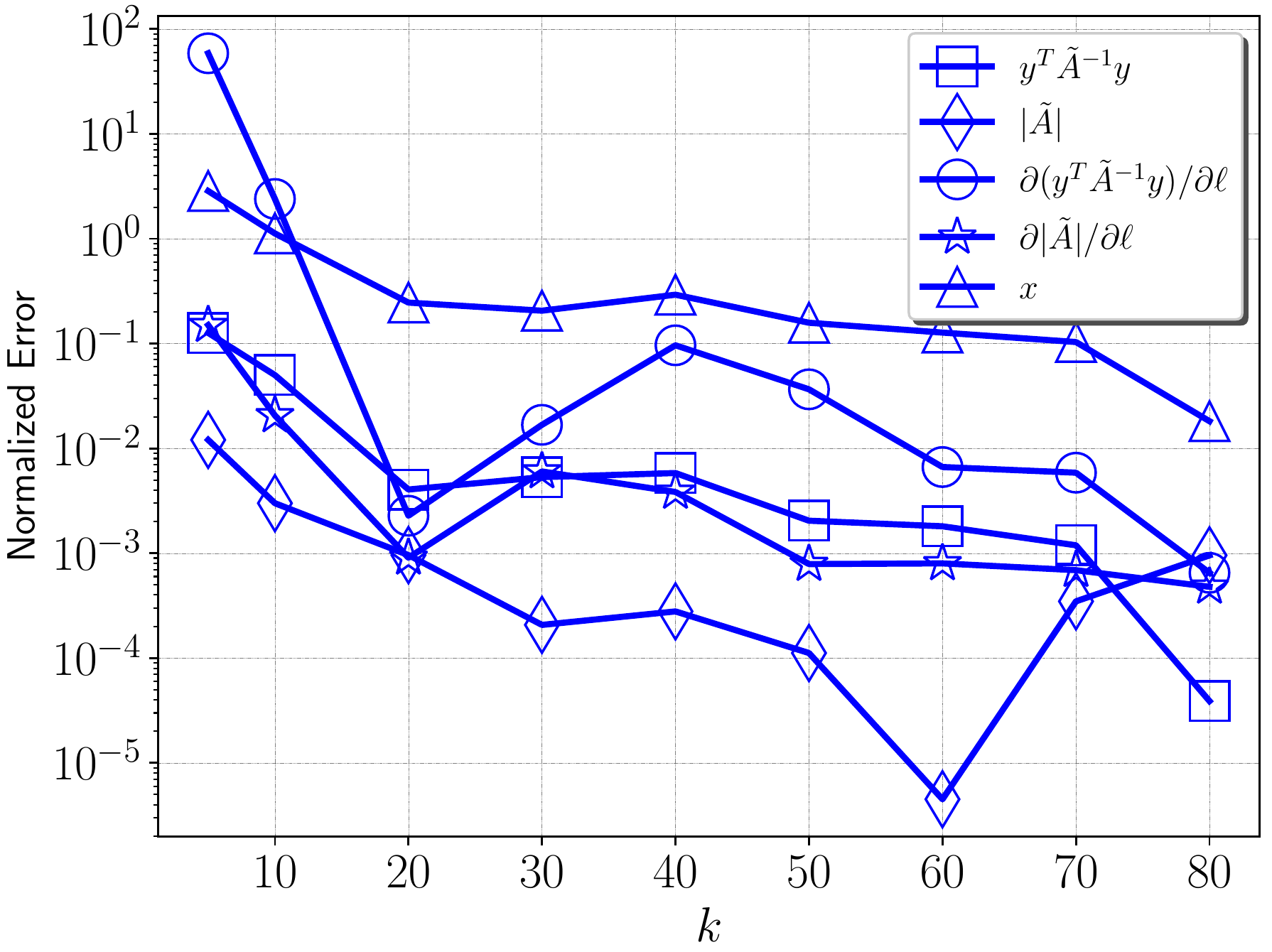}
\includegraphics[width=2.0in]{./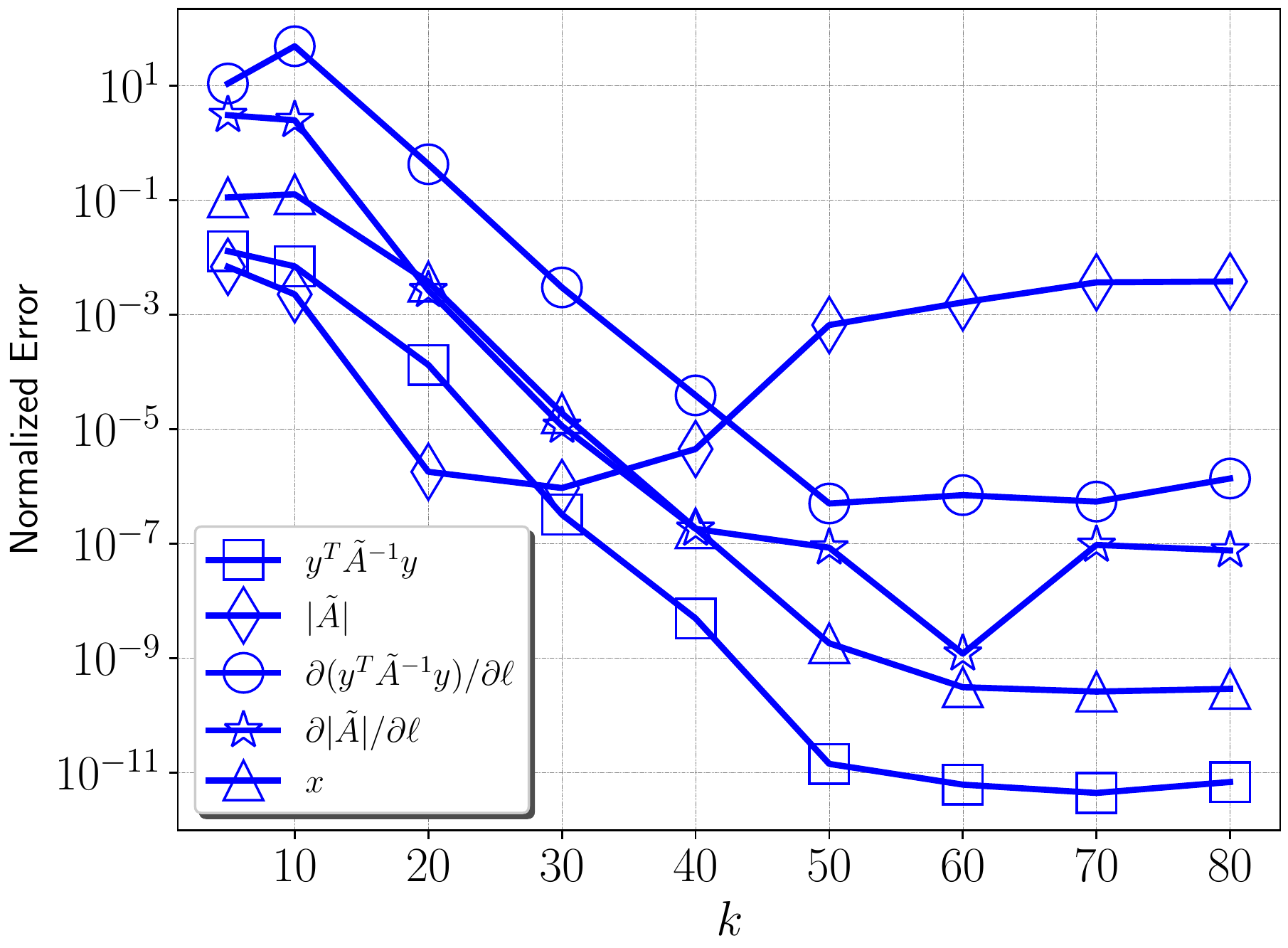}
\includegraphics[width=2.01in]{./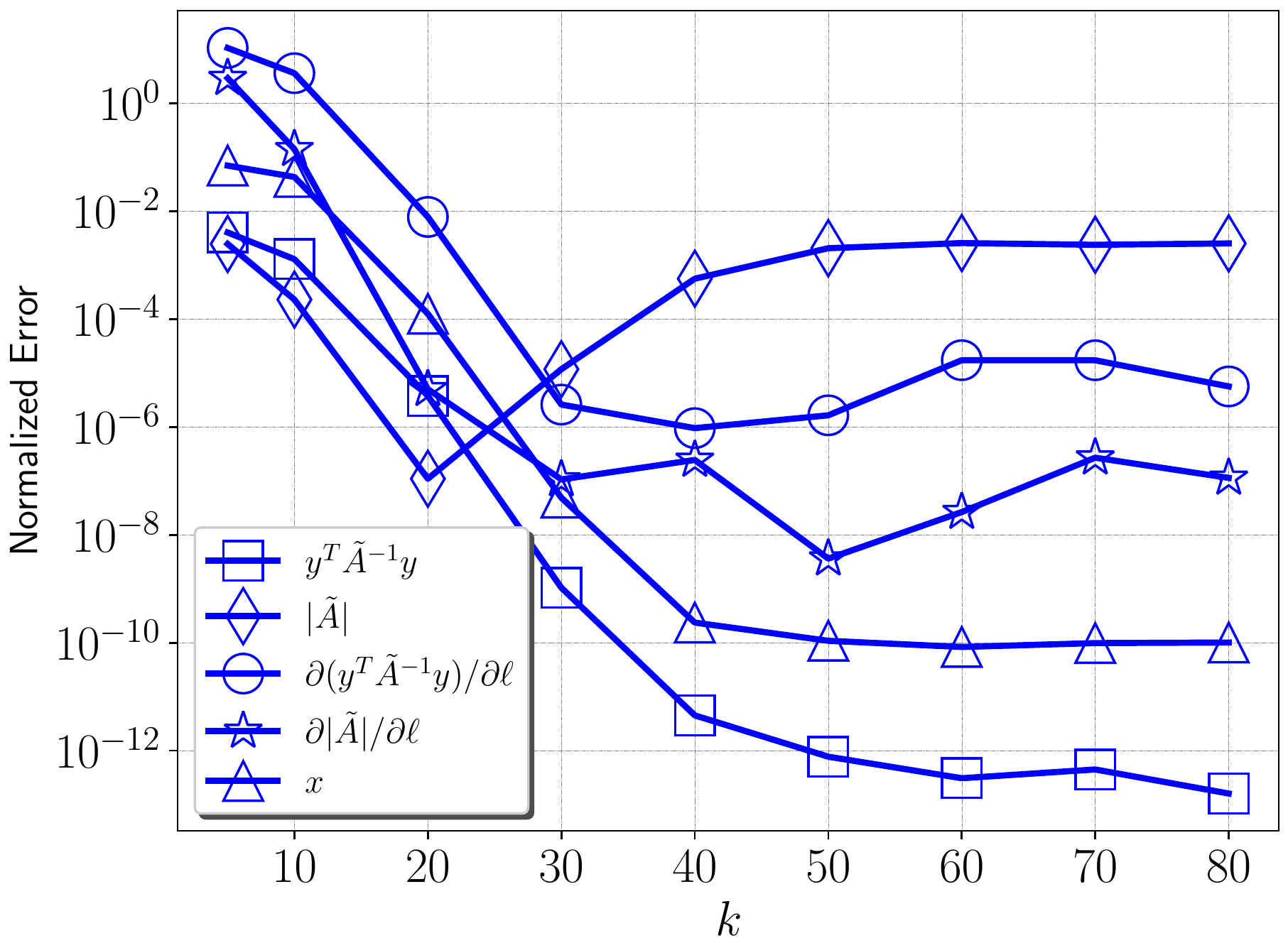}
\caption{\small{One realization of normalized error in computation of likelihood terms and $\bm x$ with respect to $k$: exponential kernel (top row), squared exponential kernel (bottom row), considering no node permutation (left column), considering node permutation (right column).}}\label{lkl_values}
\end{figure}
As expected, the squared exponential kernel which has a lower rank structure yields significantly better accuracies for the range of $k$ considered in this example. It is also apparent that the node permutation increases the accuracy of the linear solve as well as the likelihood computation and its gradient.  

\subsection{Statistical study on the accuracy of energy term} To study the statistical performance of the approach, for a kernel matrix with $n=5000$ and $\eta=1050$ cf. Section~\ref{S2_2}, we run the code $100$ times and find estimates for the mean (denoted by $\mu$) and standard deviation (denoted by $\sigma$) of energy for both exponential and squared exponential kernels, cf. Figure~\ref{lkl_values_UncBand}. From these results, it is apparent that the normalized errors associated with squared exponential kernel are smaller, which is due to the lower rank structure of the resulting kernel matrix as shown in Figure~\ref{smoothness_rank}. The small variation in the error is also apparent, which means more certain (and more accurate) approximations are obtained from the lower rank squared exponential kernel matrix.

\begin{figure}[h]
\centering
\includegraphics[width=1.8in]{./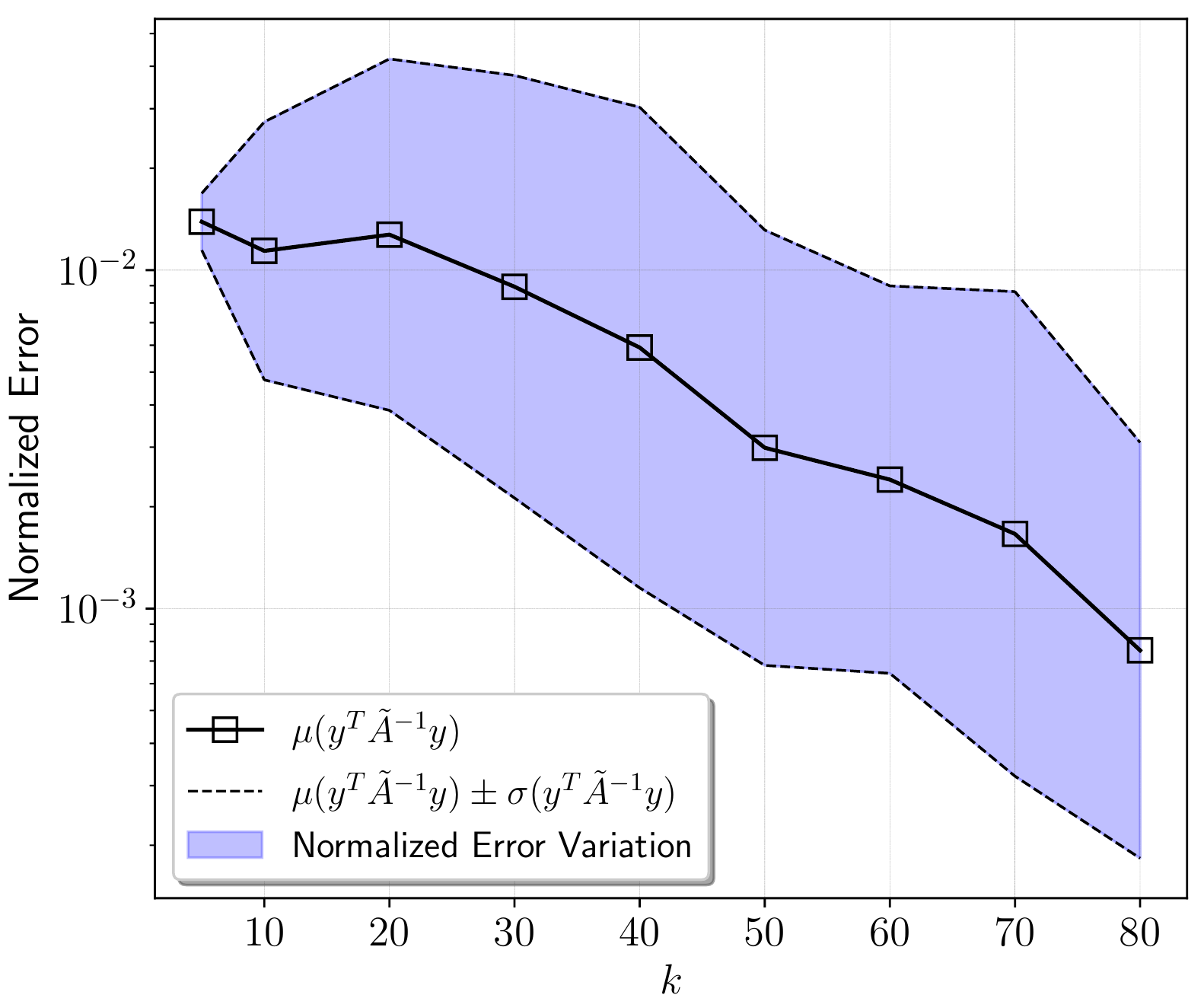}
\includegraphics[width=1.8in]{./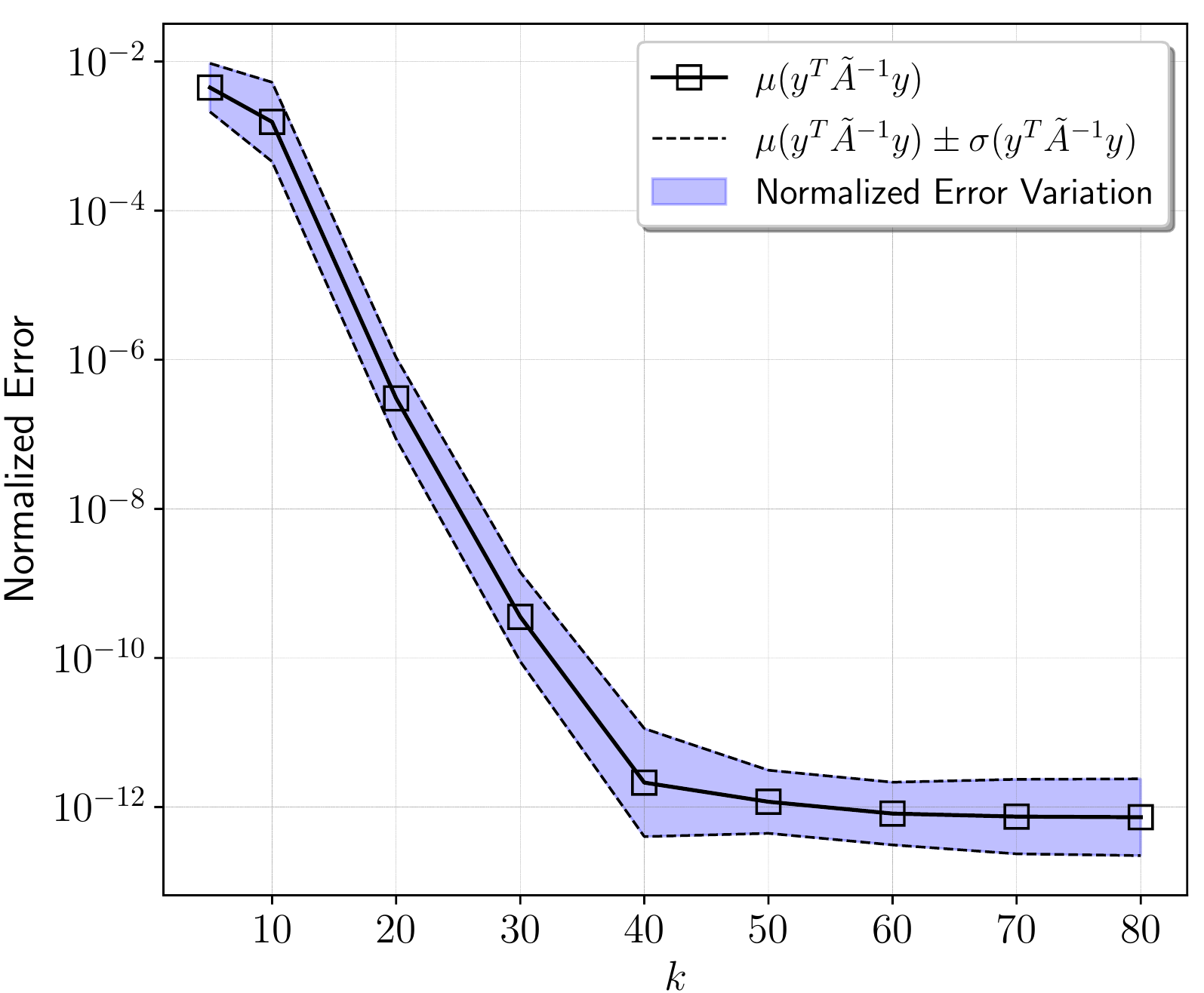}
\caption{\small{Statistical normalized errors for the energy term associated with the exponential kernel (left) and squared exponential kernel (right). Statistical results are obtained by $100$ runs of the code for each $k$. }}\label{lkl_values_UncBand}
\end{figure}

\section{Numerical Example II}\label{numexII}
In this section, we provide further details on the numerical example in Section~\ref{S5_2}. The geometry and boundary condition of the structure as well as the nominal distribution of von Mises stress are shown in Figure~\ref{fig_struct}. The structure is assumed to be comprised of two materials (hence double-phase) shown via gray and darker gray regions within the structure. The nominal elastic modulus in the darker region is double the one in the lighter region. 

\begin{figure}[h]
\centering
\includegraphics[width=2.41in]{./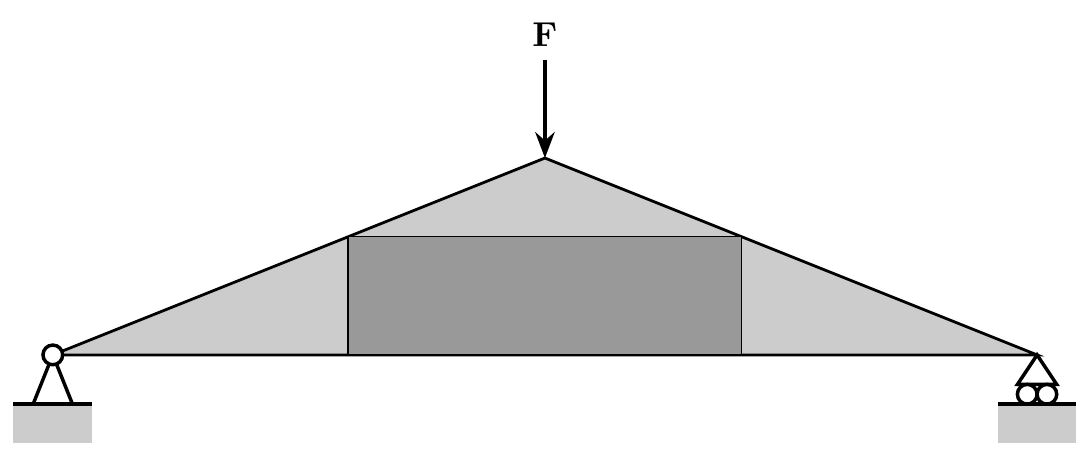} \qquad\includegraphics[width=2.38in]{./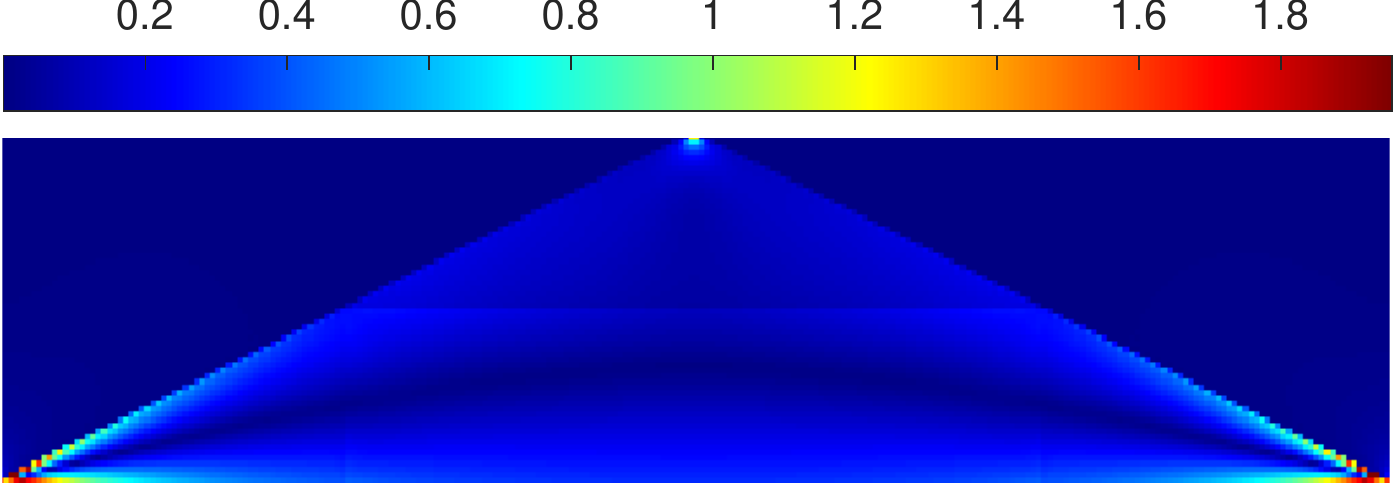}
\caption{\small{Loading and boundary conditions for a triangular double-phase linear elastic structure (left). Distribution of nominal von Mises stress (right).}}\label{fig_struct}
\end{figure}
The elastic modulus in the domain is represented via a Karhunen-Lo\'{e}ve expansion in the form of:
\begin{equation}
E(\bm x, \bm \xi) = \sum_{i=1}^{10} \sqrt{\lambda_i} E_i (\bm x) \xi_i
\end{equation}
where $\bm \xi = \{\xi_i \}_{i=1}^{10}$ are uniformly distributed random (parametric) variables, i.e. $\xi_i \sim U[0,1]$.  The eigenvalues $\lambda_i$ and eigenvectors $E_i(\bm x)$ are obtained from an eigen-analysis on the exponential kernel in 2D. We refer the readers to~\cite{mldq_vkz,Teckentrup15} for further details on this eigen-analysis. We consider a relatively high-dimensional KL expansion, i.e. with $10$ modes, to test the performance of the solver with the ARD kernel with multiple hyperparameters. 

Figure~\ref{optimal_ell} shows the optimal hyperparameter values as the result of statistical boosting. These results are shown for each dimension of data (which are in total $10$ dimensions) and for all (statistical boosting) models, therefore total of $10$ optimal values. We compute the error in the test data in each case and select the one with the smallest error. The third and fourth panes show the point-wise error for $q_1$ and $q_2$ quantities for a test data set. The point-wise error is only shown with respect to one dimension (feature) of the data, i.e. $\xi_5$.
\begin{figure}[h]
\centering
\includegraphics[width=1.2in]{./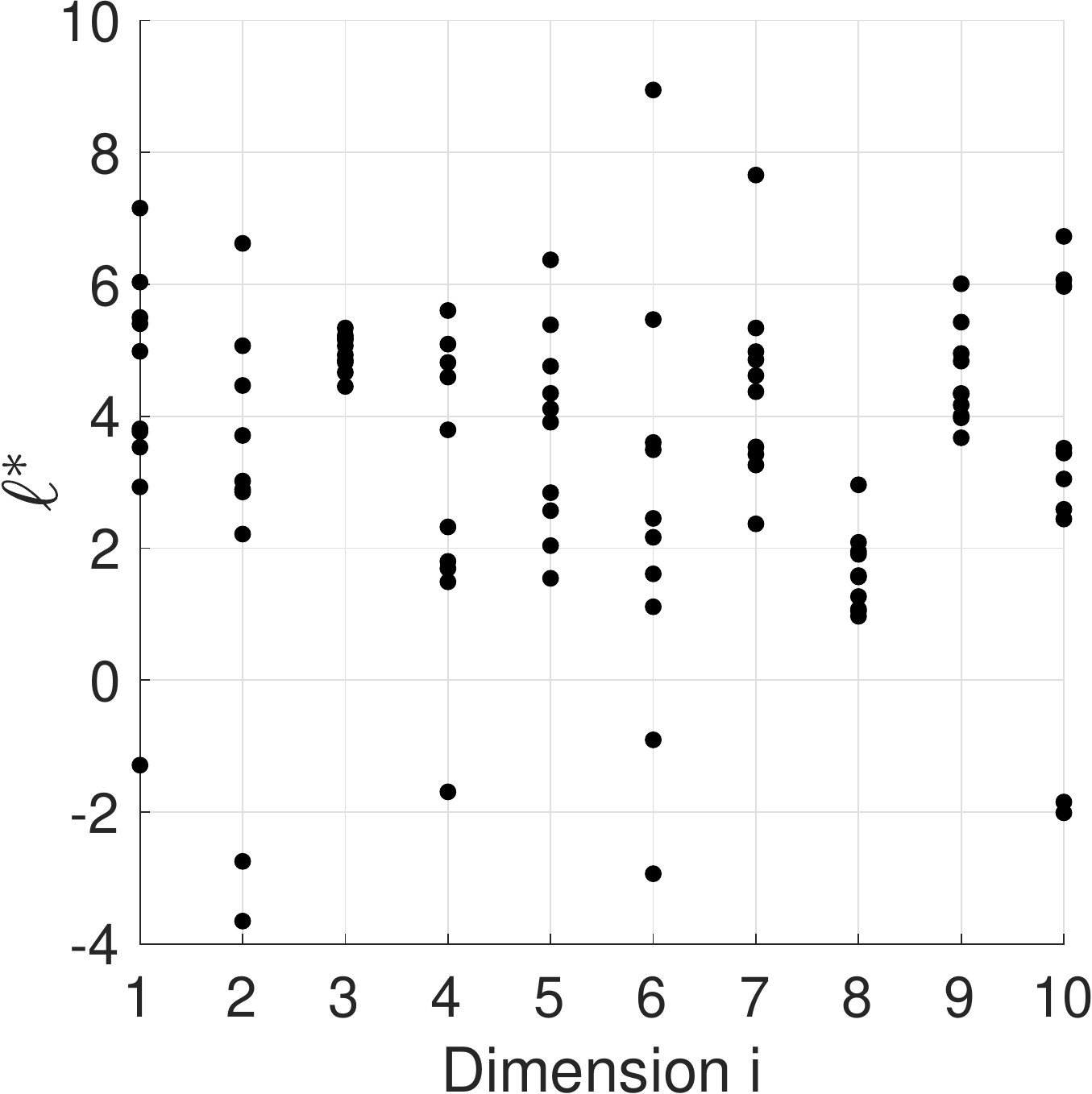}
\includegraphics[width=1.24in]{./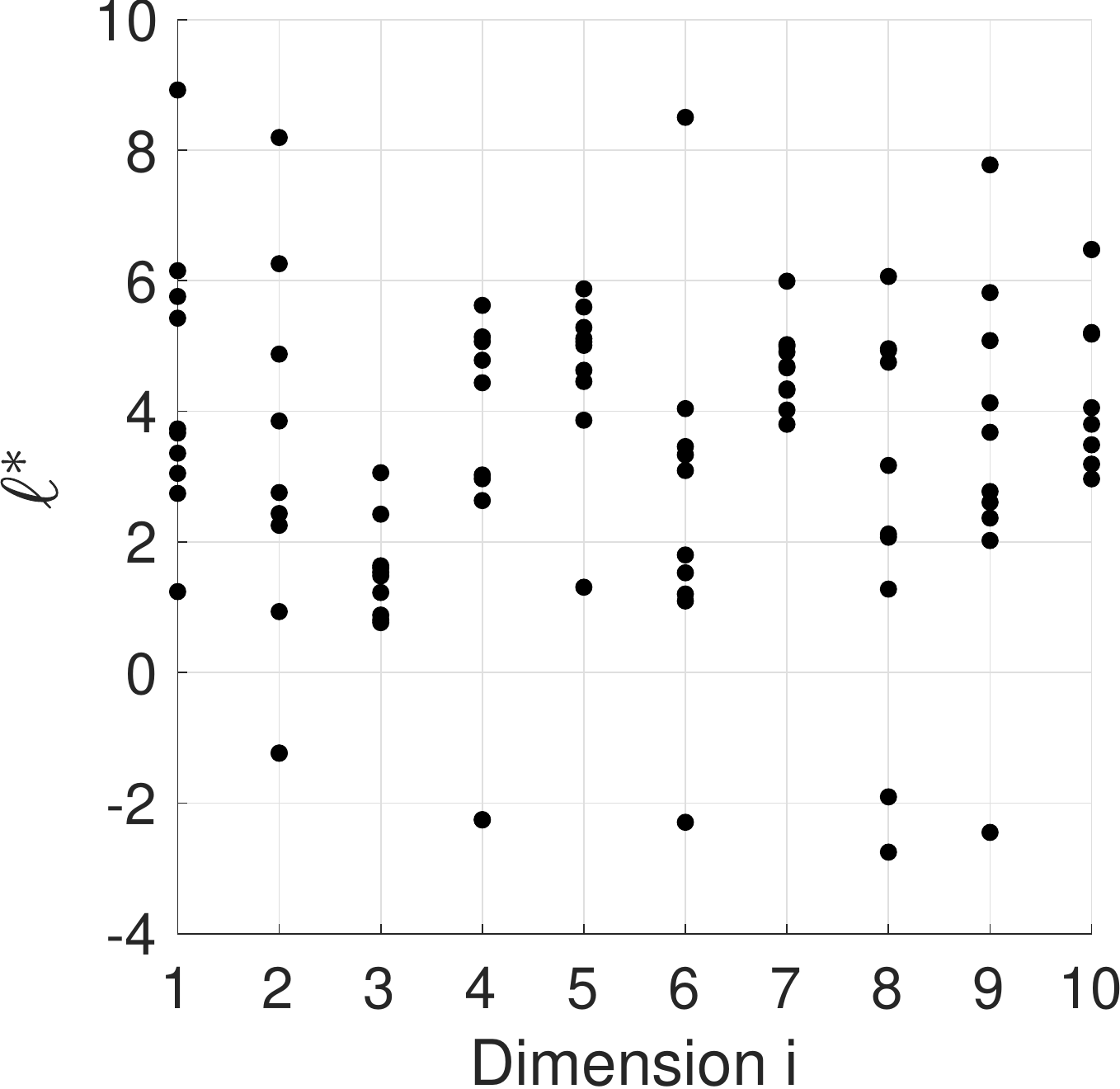}
\includegraphics[width=1.24in]{./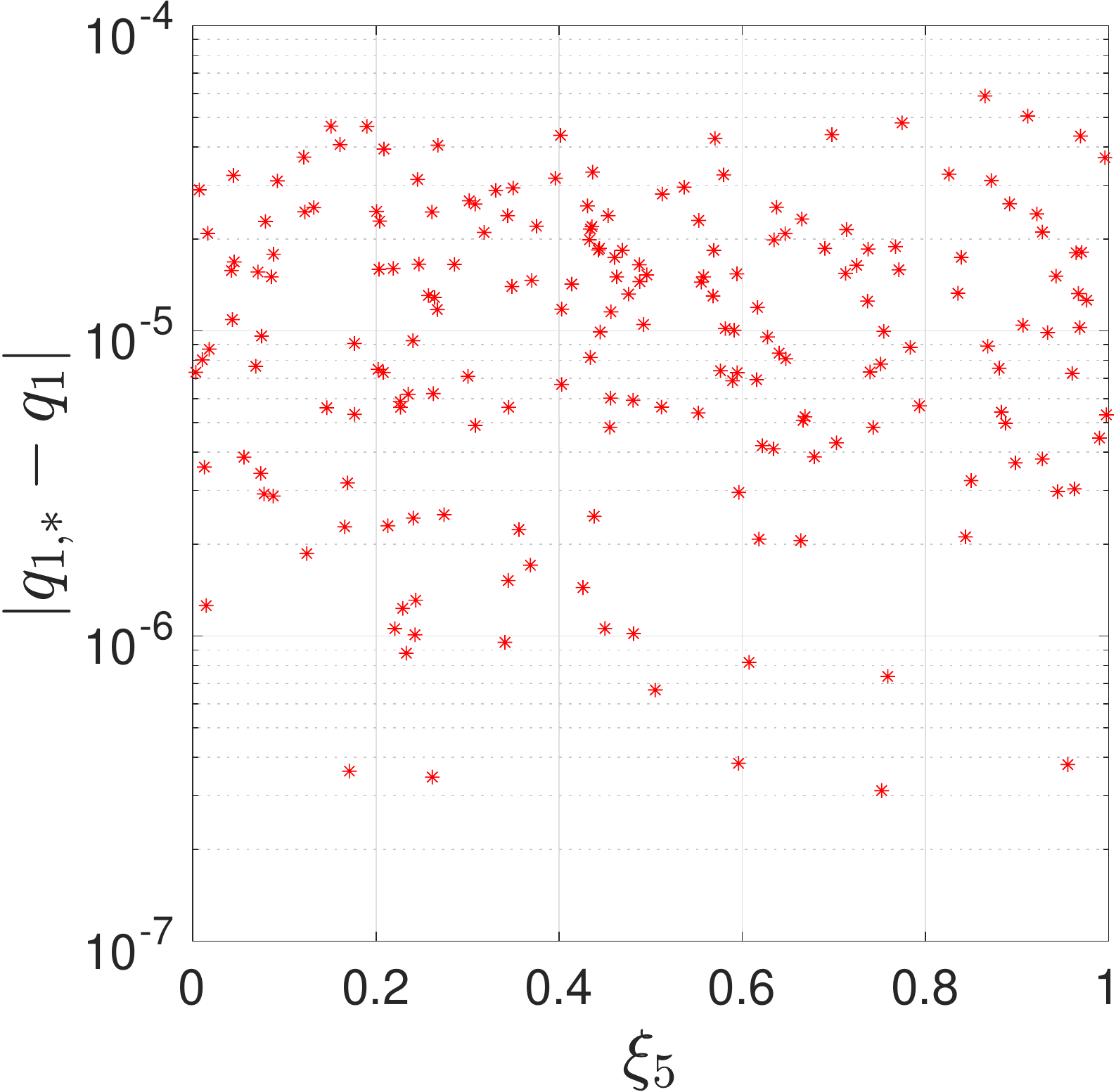}
\includegraphics[width=1.24in]{./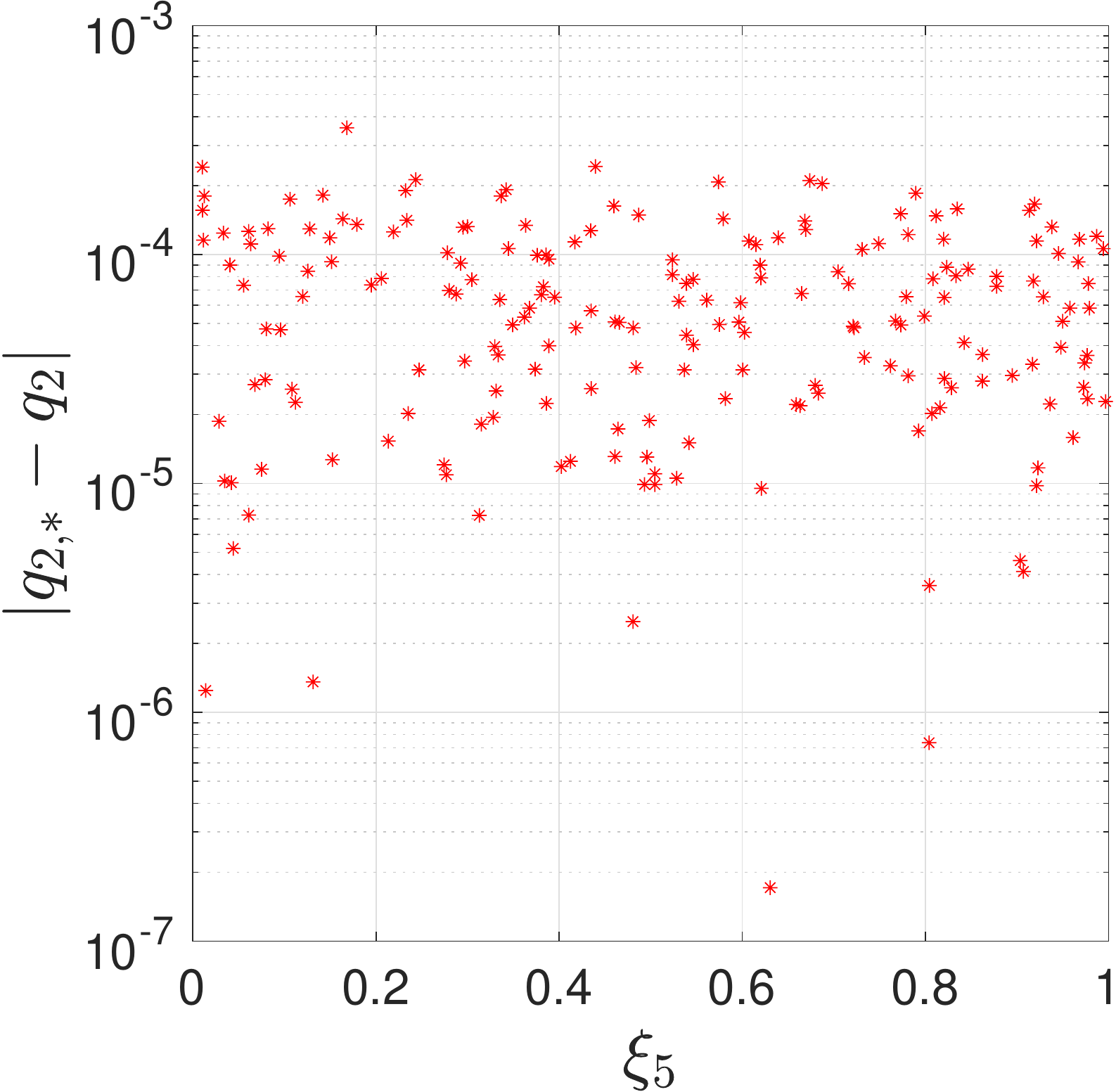}
\caption{\small{Optimal hyperparameter values with respect to data dimensions (first and second panes), and the point-wise error of GP prediction (third and fourth panes) for both $q_1$ and $q_2$.}}\label{optimal_ell}
\end{figure}

\section*{Acknowledgments}
The first author thanks Dr. Majid Rasouli from the University of Utah for his insights and assistance on details of the algebraic multigrid method.

\end{document}